\newtheoremstyle{case}{}{}{}{}{}{:}{ }{}
\theoremstyle{case}
\newcommand{\shE}{\mathcal{E}}
\newcommand{\shk}{\mathscr{K}}
\newcommand{\shn}{\mathscr{N}}
\newcommand{\sho}{\mathcal{O}}
\newcommand{\shp}{\mathscr{P}}
\newcommand{\bz}{\mathbb{Z}}
\newcommand{\bc}{\mathbb{C}}
\newcommand{\bp}{\mathbb{P}}
\newcommand*{\sheafhom}{\mathscr{H}\kern -.5pt om}
\newcommand*{\sheafext}{\mathscr{E}\kern -.5pt xt}
\newcommand*{\sheafend}{\mathscr{E}\kern -.5pt nd}
\newcommand{\Ext}{\mathrm{Ext}}
\newcommand{\Hom}{\mathrm{Hom}}
\newcommand{\rhom}{\mathrm{RHom}}
\newcommand{\Pic}{\mathrm{Pic}}
\newcommand{\ch}{\mathrm{ch}}
\newcommand{\Ker}{\mathrm{Ker}}
\theoremstyle{plain}
\newtheorem{thm}{Theorem}[section] % reset theorem numbering for each chapter
\newtheorem{lemma}[thm]{Lemma} % same for example numbers·
\newtheorem{Corollary}[thm]{Corollary} % same for example numbers
\newtheorem{proposition}[thm]{Proposition} % same for example numbers
\theoremstyle{definition}
\newtheorem{Setting}[thm]{Setting}
\newtheorem{Conjecture}[thm]{Conjecture} % same for example numbers
\newtheorem{claim}[thm]{Claim} % same for example numbers
\newtheorem{remark}[thm]{Remark} % same for example numbers
\lstdefinelanguage{Julia}%
  {morekeywords={abstract,break,case,catch,const,continue,do,else,elseif,%
      end,export,false,for,function,immutable,import,importall,if,in,%
      macro,module,otherwise,quote,return,switch,true,try,type,typealias,%
      using,while},%
   sensitive=true,%
   % alsoother={$},%
   morecomment=[l]\#,%
   morecomment=[n]{\#=}{=\#},%
   morestring=[s]{"}{"},%
   morestring=[m]{'}{'},%
}[keywords,comments,strings]%
\bfseries\color{blue},
\author{Junyu MENG}
\date{}
\title{Hilbert squares of genus 16 K3 surfaces}
\begin{document}
\maketitle
\begin{abstract}
    We consider the geometry of a general polarized K3 surface $(S,h)$ of genus 16 and its Fourier-Mukai partner $(S',h')$. We prove that $S^{[2]}$ is isomorphic to the moduli space $M_{S'}(2,h',7)$ of stable sheaves with Mukai vector $(2,h',7)$ and describe the embeddings of the projectivization of the stable vector bundle of Mukai vector $(2,-h',8)$ over $S'$ into these two isomorphic hyper-K\"ahler fourfolds. Following the work of Frédéric Han \cite{Frederic}, we explicitly construct an interesting 3-form $t_1\in \wedge^3 V_{10}^*$ which potentially gives an isomorphism between $S^{[2]}$ and the Debarre-Voisin fourfold in $G(6,V_{10})$ associated to $t_1\in \wedge^3 V_{10}^*$. This would provide a geometric explanation of the existence of such an isomorphism, which was proved in \cite{oberdieck} by a completely different argument.
\end{abstract}
\section{Introduction}
\subsection{Motivation}
     Deformations of polarized Hilbert schemes of points on K3 surfaces are important series of examples of polarized hyper-K\"ahler manifolds. A natural question is to give descriptions of (general) elements in these deformation families. The Debarre-Voisin construction is a nice answer to this question in a specific case. More explicitly, let $V_{10}$ be a 10-dimensional vector space, and let $t\in \wedge^3 V_{10}^*$ be a nonzero 3-form (which is usually called a trivector), then the Debarre-Voisin 4-fold $DV(t)$ is defined as a subscheme of $G(6,V_{10})$ as follows:
     $$DV(t)=\{[V_6]\in G(6,V_{10})|  \ \mathrm{the \ restricted} \ \text{3-}\mathrm{form} \ t|_{V_6} \ \mathrm{is \ zero}\}.$$
     When $DV(t)$ is smooth, it is a hyper-K\"ahler manifold of K3$^{[2]}$-type by \cite{DebarreVoisin}. The restriction of the Pl\"ucker polarization  to $DV(t)$ is denoted by $\sho_{DV(t)}(1)$, which has Beauville-Bogomolov-Fujiki square 22 and divisibility 2. The nice thing is that these $(DV(t), \sho_{DV(t)}(1))$ give a locally complete family of smooth polarized hyper-K\"ahler manifolds of K3$^{[2]}$-type.

     Let $\mathscr{M}_{DV}$ be the projective GIT quotient $\bp(\wedge^3V_{10}^*)//SL(V_{10})$ and let $\mathscr{F}$ be the period domain for polarized hyper-K\"ahler manifolds deformation equivalent to $(DV(t), \sho_{DV(t)}(1))$. Both varieties are 20-dimensional. We can summarize the previous facts by saying that there exists a dominant rational map from $\mathscr{M}_{DV}$ to $\mathscr{F}$. Actually this map is birational as shown in \cite{Ogrady}. 
     
     A further question is to compare $\mathscr{M}_{DV}$ with the Baily-Borel compactification $\overline{\mathscr{F}}$ of $\mathscr{F}$. Both $\mathscr{M}_{DV}$ and $\overline{\mathscr{F}}$ are normal projective varieties birational to each other and we would begin by studying divisors (valuations on the rational function field) on them. A divisor on $\overline{\mathscr{F}}$ is called an HLS divisor (named after Hassett, Looijenga and Shah) if it is not a divisor on $\mathscr{M}_{DV}$ (i.e. if there does not exist a divisor on $\mathscr{M}_{DV}$ which induces the same valuation on the rational function field). It was proved in \cite{DHOV} that the Heegner divisors $\mathscr{D}_{2},\mathscr{D}_{6},\mathscr{D}_{10},\mathscr{D}_{18}$ are HLS divisors; for this they used projective models of polarized K3 surfaces of genus 2,4,6,10 constructed by Mukai. However, it was left open in \cite{DHOV} whether the Heegner divisor $\mathscr{D}_{30}$ is an HLS divisor or not.

     The Heegner divisor $\mathscr{D}_{30}$ over $\overline{\mathscr{F}}$ is actually the closure of a divisor on the open subset $\mathscr{M}\subset \overline{\mathscr{F}}$ parameterizing hyper-K\"ahler fourfolds of K3$^{[2]}$-type with a polarization of Beauville-Bogomolov-Fujiki square 22 and divisibility 2 by \cite[Theorem 3.1]{DHOV}. More precisely, a general element of $\mathscr{D}_{30}$ is given as follows: consider a general polarized K3 surface $(S,h)$ of genus 16, and let $S^{[2]}$ be the Hilbert scheme of two points on $S$, then the line bundle $2h-7\delta$ is an ample polarization of square 22 and divisibility 2 (where $h$ is the divisor over $S^{[2]}$ induced by the polarization $h$ on $S$ and $2\delta$ is the class of the exceptional divisor of the Hilbert-Chow morphism $S^{[2]}\rightarrow S^{(2)}$), which makes the polarized pair $(S^{[2]},2h-7\delta)$ an element of $\mathscr{D}_{30}\cap \mathscr{M}\subset \overline{\mathscr{F}}$. 
     
    Mukai also provided projective models for general polarized K3 surfaces of genus 16 in \cite{Mukai16}. However, the case of $\mathscr{D}_{30}$ remained open because of the lack of a deep understanding of the geometry of genus 16 polarized K3 surfaces and the complexity of the projective models in \cite{Mukai16}. Nevertheless, using Gromov-Witten techniques, Georg Oberdieck proved in \cite{oberdieck} that $\mathscr{D}_{30}$ is NOT an HLS divisor. This implies that $S^{[2]}$ arises as $DV(t)$ for some trivector $t\in \wedge^3 V_{10}^*$, where $(S,h)$ is a general polarized K3 surface of genus 16. Although the existence of the trivector is guaranteed, it is not clear how to geometrically construct $t\in \wedge^3 V_{10}^*$.

    The attempt to understand the geometry and to construct the trivector was initiated in \cite{DHOV} and \cite{Frederic}. Mukai's projective model provides a rank 2 stable vector bundle $F$ with Mukai vector $(2,h,8)$ over a general genus 16 polarized K3 surface $(S,h)$. A rank 10 vector space $V_{10}$ and a rank 6 subbundle of the trivial bundle $V_{10}$ were constructed in \cite{DHOV}, which is expected to correspond to the rank 6 universal subbundle over $G(6,V_{10})$. Frédéric Han constructed in \cite{Frederic} a trivector $t_2\in \wedge^3 V_{10}$ together with a rank 6 subbundle of the trivial bundle $V_{10}^*$ over $X=\bp_{S}(F^*)$, such that fiberwise the restriction of the 3-form $t_2$ to the rank 6 subspace is zero, inducing a map from $X$ to $DV(t_2)\subset G(6,V_{10}^*)$. It was conjectured that there should exist a canonical isomorphism $V_{10}\cong V_{10}^*$ (with the hope to get the trivector $t\in \wedge^3 V_{10}^*$ corresponding to $t_2\in \wedge^3 V_{10}$) and that $X$ can be embedded in $S^{[2]}$.

    The present article begins by proving that $X$ is naturally embedded into $S'^{[2]}$, where $S'$ is the unique nontrivial Fourier-Mukai partner of $S$, and that there is a natural isomorphism $V_{10}^*\cong V_{10}'$, where $V_{10}'$ is the rank 10 vector space constructed in \cite{DHOV} for $S'$. Correspondingly we have a trivector $t_1\in \wedge^3 V_{10}^*$ corresponding to $t_2'\in \wedge^3 V_{10}'$, where $t_2'$ is obtained by applying the construction in \cite{Frederic} to $S'$. (Since $S'$ is the unique nontrivial Fourier-Mukai partner of $S$, the roles of $S$ and $S'$ are often interchanged and we sometimes use the parallel construction by adding or deleting upperscript $'$ without explanation). 
    Thus we can state what we believe to be the appropriate conjecture: (it has been verified to be true over some examples by computer with the aide of Frédéric Han)
    \begin{Conjecture}
        There exists a canonical isomorphism $S^{[2]}\cong DV(t_1)\subset G(6,V_{10})$.\label{conjecture}
    \end{Conjecture}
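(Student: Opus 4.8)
The plan is to realize the conjectured isomorphism by an explicit morphism built from the Mukai bundle $F$ on $S$, to verify that its image satisfies the Debarre–Voisin incidence condition for the \emph{specific} trivector $t_1$, and finally to upgrade this morphism to an isomorphism by showing it is a closed immersion onto a four-dimensional, hence full, subvariety of $DV(t_1)$. Throughout I would keep in play the two compatible descriptions of a point of the Hilbert square: as a length-two subscheme $Z\subset S$ with $v(I_Z)=(1,0,-1)$, adapted to $V_{10}=H^0(S,F)$; and, through the identification $S^{[2]}\cong M_{S'}(2,h',7)$, as a stable sheaf on the Fourier–Mukai partner $S'$, adapted to the constructions of \cite{DHOV} and \cite{Frederic}.

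First I would construct the morphism. Let $\mathcal I$ be the universal ideal sheaf on $S^{[2]}\times S$ with projections $p,q$, let $\mathcal Z$ be the universal length-two subscheme, and let $\mathcal W$ be the kernel of the evaluation $V_{10}\otimes\sho_{S^{[2]}}\to p_*(q^*F\otimes\sho_{\mathcal Z})$, so that the fibre of $\mathcal W$ over $[Z]$ is $W(Z)=H^0(S,F\otimes I_Z)\subset H^0(S,F)=V_{10}$. A Riemann–Roch computation gives $\chi(F\otimes I_Z)=\chi(F)-\chi(F|_Z)=10-4=6$, and the sequence $0\to F\otimes I_Z\to F\to F|_Z\to 0$ combined with the stability of $F$ forces $H^2(F\otimes I_Z)=0$; hence $\dim W(Z)=6$ exactly as soon as $H^0(F)\to H^0(F|_Z)$ is surjective for every length-two $Z$. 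I would establish this very-ampleness of $F$ (and its length-three refinement, needed later for the differential) from the stability of $F$ and the genericity of $(S,h)$. Granting it, $\mathcal W$ is a rank-six subbundle of the trivial bundle on all of $S^{[2]}$ and defines a morphism $\psi\colon S^{[2]}\to G(6,V_{10})$, $[Z]\mapsto W(Z)$, under which $\mathcal W$ pulls back the tautological subbundle.

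Next I would prove the incidence $\psi(S^{[2]})\subseteq DV(t_1)$, i.e.\ $t_1|_{\wedge^3 W(Z)}=0$ for every $Z$. Here the isomorphism $\det F\cong\sho_S(h)$ yields a skew pairing $\wedge^2 H^0(F)\to H^0(\sho_S(h))$, and $t_1$ — transported from $t_2'$ through $V_{10}^*\cong V_{10}'$ as in \cite{Frederic} — is assembled from this multiplication together with the Serre-duality pairing on $S$. The incidence lemma I would prove is that $t_1$ annihilates $\wedge^3 W$ whenever $W$ consists of sections of $F$ sharing a common length-two zero scheme: for $s_1,s_2,s_3\in H^0(F\otimes I_Z)$ each $s_i\wedge s_j$ lies in $H^0(\sho_S(h)\otimes I_Z)$, and the resulting classes pair to zero. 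Crucially, on the embedded $\bp$-bundle $X'=\bp_{S'}(F'^{*})\hookrightarrow S^{[2]}$ the restriction $\psi|_{X'}$ must reproduce Han's map $X'\to DV(t_1)$; matching the two both confirms the incidence along $X'$ and pins down that the trivector produced is exactly $t_1$, and not some unrelated form.

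Finally I would show $\psi$ is an isomorphism: injectivity on points follows since the common base locus of $W(Z)=H^0(F\otimes I_Z)$ is exactly $Z$, so $W(Z)$ recovers $Z$; injectivity of $d\psi\colon T_{[Z]}S^{[2]}=\Hom(I_Z,\sho_Z)\to\Hom(W(Z),V_{10}/W(Z))$ follows from the analogous separation of tangent directions. Hence $\psi$ is a closed immersion, $\psi(S^{[2]})$ is an irreducible four-dimensional closed subvariety of $DV(t_1)$, and since for general $(S,h)$ the fourfold $DV(t_1)$ is smooth and irreducible of dimension four, $\psi(S^{[2]})=DV(t_1)$; a consistency check $\psi^*\sho_{DV(t_1)}(1)=2h-7\delta$ matches the polarizations placing both fourfolds in $\mathscr{D}_{30}\cap\mathscr M$. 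I expect the main obstacle to be twofold and concentrated in the middle steps. The first is global control over all of $S^{[2]}$ rather than a generic open set — proving $h^1(F\otimes I_Z)=0$ and the injectivity of $d\psi$ for every, possibly non-reduced, $Z$ — precisely the boundary estimate whose difficulty is signalled by the Conjecture being only computer-verified on examples. The second, conceptually deeper, is the exact incidence for the \emph{specific} $t_1$: showing $t_1|_{\wedge^3 W(Z)}$ vanishes identically, not merely generically, and that the cup-product trivector coincides under $V_{10}^*\cong V_{10}'$ with the one from \cite{Frederic} applied to $S'$. Reconciling the $S$-side description $Z\mapsto H^0(F\otimes I_Z)$ with the $S'$-side Fourier–Mukai picture, so that these a priori different constructions yield the same morphism into the same $DV(t_1)$, is where the real work lies.
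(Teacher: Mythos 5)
The statement you are trying to prove is labelled a \emph{Conjecture} in the paper, and the paper does not prove it: it only assembles evidence (the embedding of $X'$ into $S^{[2]}$, the identification $V_{10}^*\cong V_{10}'$, the isomorphism $S^{[2]}\cong M_{S'}(2,h',7)$, and computer verification on examples). So there is no proof of the paper's to compare yours against; your proposal has to stand on its own, and it does not.

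The central gap is a misidentification of the ambient vector space. In the paper, $W_{10}=H^0(S,F)$ and $V_{10}=H^0(S^{[2]},Q_4)$ are two \emph{different} $10$-dimensional spaces: $V_{10}$ is the kernel of $\mathrm{Sym}^2 W_{10}\rightarrow H^0(S,\mathrm{Sym}^2F)$, i.e.\ the space of quadrics in $\bp(W_{10}^*)$ cutting out $X$, and the trivector $t_1$ lives in $\wedge^3V_{10}^*$. Your morphism $[Z]\mapsto W(Z)=H^0(S,F\otimes I_Z)$ produces a $6$-plane in $W_{10}$, hence a map to $G(6,W_{10})$, not to $G(6,V_{10})$; the expression $t_1|_{\wedge^3 W(Z)}$ does not typecheck, and your proposed description of $t_1$ via the pairing $\wedge^2H^0(F)\rightarrow H^0(\sho_S(h))$ is not the paper's construction (which builds $t_2\in\wedge^3V_{10}$ from the distinguished $2$-form $\omega\in\wedge^2V_8$ through the syzygy map $V_8\rightarrow V_{10}\otimes W_{10}$). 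The correct candidate morphism is $\{s_1,s_2\}\mapsto\ker\bigl(V_{10}\rightarrow Q_4|_{\{s_1,s_2\}}\bigr)$, whose fibre is the space of quadrics in $V_{10}$ vanishing on the $\bp^3\subset\bp(W_{10}^*)$ spanned by $\bp(F_{s_1}^*)$ and $\bp(F_{s_2}^*)$ --- related to, but not equal to, your $H^0(F\otimes I_Z)$, which is the space of \emph{linear} forms through that $\bp^3$.

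Even after correcting the target, the decisive step --- that $t_1$ restricts to zero on this rank-$6$ subbundle over \emph{all} of $S^{[2]}$, not merely over the divisor $X'$ where the paper verifies the compatibility with Han's construction --- is exactly the open content of the conjecture, and your proposal only asserts an ``incidence lemma'' without an argument. The surrounding architecture (pin down $t_1$ by matching along $X'$, then conclude by dimension count and smoothness of $DV(t_1)$) is reasonable and consistent with the paper's stated strategy, but as written the proposal neither constructs the right morphism nor proves the incidence, so it is not a proof.
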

\subsection{Statement of the result}
Let $(S,h)$ be a general polarized K3 surface of genus 16. The main point of this article is to consider the ``dual" K3 surface $S'$, which is the unique nontrivial Fourier-Mukai partner of $S$, and the related geometry. The "dual" K3 surface $S'$ is in fact isomorphic to the moduli space of stable sheaves over $S$ with Mukai vector $(3,h,5)$.

There are two interesting spherical vector bundles over $S$: $\sho_S$ and the rank 2 bundle $F$ with Mukai vector $(2,h,8)$ which arises in Mukai's projective model in \cite{Mukai16}. Using the Fourier-Mukai transform $\phi_\shE: D^b(S)\rightarrow D^b(S')$ associated to the normalized universal sheaf $\shE$ over $S\times S'$, we can construct  two more spherical vector bundles $G_4',E_8'$ over $S'$ with Mukai vectors $(4,h',4)$ and $(8,3h',17)$ respectively:
$$\phi_\shE(F)=G_4'[0],  \ \ \ \ \phi_\shE(\sho_S)=E_8'[-2].$$ 
The rank 4 vector bundle $G_4$ over $S$ recovers the construction of the rank 4 vector bundle (under the same name $G_4$) in \cite{Frederic} and this new way to define it makes explicit its relation with the rank 4 vector space (generated by the 4 arrows in the 4-Kronecker quiver) arising in Mukai's projective models. 

As already mentioned, we have an embedding of $X'=\bp_{S'}(F'^*)$ into $S^{[2]}$, together with an embedding of $X'$ into the moduli space $M_{S'}(2,h',7)$ of stable sheaves of Mukai vector $(2,h',7)$ over $S'$:

\begin{thm}
    There exist natural embeddings of $X'$ into both $S^{[2]}$ and $M_{S'}(2,h',7)$.
    \begin{itemize}
        \item The image of $X'$ in $M_{S'}(2,h',7)$ is equal to the Brill-Noether locus parametrizing those $[K']\in M_{S'}(2,h',7)$ such that $\Hom(K',F')\neq 0$.
        \item The image of $X'$ in $S^{[2]}$ is equal to the degeneracy locus of the morphism $H^0(S,G_4)\otimes\sho_{S^{[2]}}=\bc^8\otimes\sho_{S^{[2]}}\rightarrow G_4^{[2]}$ over $S^{[2]}$, where $G_4^{[2]}$ is the tautological transform of the vector bundle $G_4$.
    \end{itemize}
    Moreover, the embedding of $X'$ into $S^{[2]}$ induces an identification of the restriction of the rank 6 subbundle of $V_{10}\otimes\sho_{S^{[2]}}$ to $X'$ with the rank 6 subbundle of $V_{10}'^*\otimes \sho_{X'}$, which are constructed in \cite[Section 8]{DHOV} and \cite{Frederic} respectively.
\end{thm}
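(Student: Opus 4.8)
The plan is to build the embedding into $M_{S'}(2,h',7)$ by hand from elementary modifications, to obtain the embedding into $S^{[2]}$ by composing with the isomorphism $\iota\colon S^{[2]}\xrightarrow{\sim}M_{S'}(2,h',7)$, and then to translate the Brill--Noether description on $S'$ into the degeneracy description on $S$ through the underlying Fourier--Mukai equivalence.

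First I would set up $g\colon X'\to M_{S'}(2,h',7)$. A point of $X'=\bp_{S'}(F'^{*})$ is a pair $(s',[\lambda])$, where $\lambda\in F'^{*}_{s'}$ determines a one-dimensional quotient $F'\twoheadrightarrow\sho_{s'}$; I send it to $K'=\ker(F'\to\sho_{s'})$, whose Mukai vector is $(2,h',8)-(0,0,1)=(2,h',7)$. Stability of $K'$ follows from $\mu$-stability of $F'$: a destabilizing subsheaf of $K'$ would be a subsheaf of $F'$ of slope $\geq\mu(F')=15$, which is impossible. Assembling these modifications into a family over $X'$ shows $g$ is a morphism. That $g$ is injective I would prove by reconstructing $(s',[\lambda])$ from $K'$ via the canonical inclusion $K'\hookrightarrow K'^{\vee\vee}\cong F'$, whose cokernel $F'/K'$ is a length-one sheaf at $s'$; since $F'$ is the unique stable sheaf of its spherical Mukai vector, this datum is canonical. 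Finally $g$ is unramified, hence a closed immersion, by matching $T_{(s',[\lambda])}X'$ with the subspace of $\Ext^{1}(K',K')$ coming from modifications; as $\dim X'=3$ the image is a divisor. To identify the image with $\{[K']:\Hom(K',F')\neq 0\}$, note that $K'\hookrightarrow F'$ gives one inclusion; conversely, given a nonzero $\varphi\colon K'\to F'$, both sheaves are stable of rank $2$ and slope $15$, so $\varphi$ cannot have a rank-one image (which would be both a quotient of the stable $K'$ and a subsheaf of the stable $F'$, forcing $\mu>15$ and $\mu<15$), hence $\varphi$ is injective with cokernel of Mukai vector $(0,0,1)$, i.e. $\sho_{s'}$, placing $K'$ in the image. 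This also shows $\Hom(K',F')$ is generically zero, consistent with a divisorial image.

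For the second embedding I would set $f=\iota^{-1}\circ g$, a closed immersion whose image is the divisor $\iota^{-1}(\{\Hom(K',F')\neq 0\})$, and it remains to match this with the degeneracy locus of $\mathrm{ev}\colon\bc^{8}\otimes\sho_{S^{[2]}}\to G_4^{[2]}$. Since $H^{0}(S,G_4)=\bc^{8}$ and $G_4^{[2]}$ has rank $8$, this is a map of rank-$8$ bundles, its degeneracy locus $\{\det\mathrm{ev}=0\}$ has expected codimension $1$, and fibrewise $\det\mathrm{ev}_{Z}=0$ is equivalent to $\ker\mathrm{ev}_Z=H^{0}(G_4\otimes I_Z)\neq 0$ (generically zero, as $\chi(G_4\otimes I_Z)=0$ and $H^{2}(G_4\otimes I_Z)=0$). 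Writing $\Phi$ for the equivalence underlying $\iota$, so that $\Phi(I_Z)=K'_Z$, the Brill--Noether condition reads $\Hom_{S}(I_Z,\Phi^{-1}F')\neq 0$, which I must compare with $H^{0}(G_4\otimes I_Z)=\Hom_{S}(G_4^{\vee},I_Z)\neq 0$. The main obstacle is precisely this identification. It is tempting to hope that $\Phi^{-1}(F')=G_4$, which would turn the modification into a sequence $0\to I_Z\to G_4\to\shE_{s'}\to 0$ with stable quotient of Mukai vector $(3,h,5)$; but a total-$\Ext$ count rules this out, since $\sum_i\dim\Ext^{i}_{S}(I_Z,G_4)=16$ (from $0\to I_Z\to\sho_S\to\sho_Z\to 0$ and Serre duality) while $\sum_i\dim\Ext^{i}_{S'}(K'_Z,F')=0$ for generic $Z$, and an equivalence preserves the total $\Ext$. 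Thus $\Phi^{-1}(F')$ is a genuine complex, and the identification of the two divisors must be effected with the finer data of the transforms $\Phi(\sho_S)=E_8'[-2]$ and $\Phi(F)=G_4'$, tracking all WIT indices and shifts so that the evaluation map on $S$ emerges cleanly; this is where I expect the real difficulty.

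For the final assertion I would compare the two rank-$6$ subbundles directly. Both are built from the same universal data --- the evaluation of $H^{0}(G_4)$ on $S^{[2]}$ in \cite[Section 8]{DHOV}, and the modification of $F'$ along $X'$ in \cite{Frederic} --- so after restricting the defining sequence of the rank-$6$ subbundle of $V_{10}\otimes\sho_{S^{[2]}}$ along $f$, I would match it term by term with the rank-$6$ subbundle of $V_{10}'^{*}\otimes\sho_{X'}$, using the universal sequence over $X'$ and the canonical isomorphism $V_{10}\cong V_{10}'^{*}$. This step is essentially bookkeeping, but delicate, because the two subbundles are presented by visibly different recipes; the content is that they are the restriction and the total space of one and the same construction, transported across $f$.
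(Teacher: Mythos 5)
Your construction of $g\colon X'\to M_{S'}(2,h',7)$ by elementary modification $0\to K'\to F'\to\sho_{s'}\to 0$, and your characterization of its image as $\{\Hom(K',F')\neq 0\}$, match the paper's Proposition on this point. But the rest of the proposal has two genuine gaps. First, you obtain the embedding into $S^{[2]}$ as $\iota^{-1}\circ g$, taking the explicit isomorphism $\iota\colon S^{[2]}\xrightarrow{\sim}M_{S'}(2,h',7)$ as given. That isomorphism is a separate theorem whose proof in the paper \emph{starts} by verifying its key claim on the image of $X'$, using the very Eagon--Northcott data that define the direct map $X'\to S^{[2]}$; so your route is at best circular and at worst unfounded, since the abstract isomorphism of Kapustka--van Geemen does not come with the Fourier--Mukai description $I_Z\mapsto K'_Z$ that your divisor comparison requires. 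The paper instead builds $X'\to S^{[2]}$ directly: the family $\shE(-H')\to G_4^*$ over $S\times X'$ is fibrewise generically injective with cokernel an ideal sheaf of length $2$ (a separate lemma), the Eagon--Northcott complex produces the sequence $0\to\shE(-H')\to G_4^*\to\mathcal{I}_{\mathcal{C}'}(3H'-h')\to 0$, and dualizing yields a distinguished section $v\in V_8=H^0(S,G_4)$ vanishing exactly on $Z$, which places the image inside the degeneracy locus of $V_8\to G_4^{[2]}$ with no Fourier--Mukai translation needed. That the map is then an isomorphism \emph{onto} the whole degeneracy locus is not formal: the paper uses that the degeneracy divisor has class $h-4\delta$, is prime exceptional, is a smooth $\bp^1$-fibration by Druel and Kapustka--van Geemen, and satisfies $(2h-7\delta)^3\cdot(h-4\delta)=264=(4H'-h')^3$, so that Zariski's main theorem applies. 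None of this is in your proposal.

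Second, the step you yourself flag as ``where I expect the real difficulty'' --- identifying $\Hom_{S'}(K'_Z,F')\neq 0$ with $\ker\bigl(\bc^8\to G_4^{[2]}|_Z\bigr)\neq 0$ --- is precisely the content you would need to supply, and it is not mere tracking of WIT indices. The mechanism in the paper is the presentation $0\to E_1'\oplus E_2'\to E_8'\to K'\to 0$ together with the identifications $\Hom(E_8',F')\cong V_8$ and $\Hom(E_i',F')\cong G_4|_{s_i}$, under which applying $\Hom(-,F')$ gives exactly $0\to\Hom(K',F')\to V_8\to G_4|_{s_1}\oplus G_4|_{s_2}$, a fiber of the evaluation map; but that presentation is itself a consequence of the isomorphism theorem, so it cannot be your starting point here. (Your observation that $\Phi^{-1}(F')$ is a genuine two-term complex is correct and rightly kills the naive identification, but you stop before repairing it.) Finally, the rank-$6$ subbundle comparison is not bookkeeping: the paper's proof requires showing that the pullback of $Q_4^*$ equals $G_4'(-H')$ inside $V_{10}^*\cong V_{10}'$, which rests on a nontrivial vanishing (the pairing $\sho(h'-3H')\otimes G_4'(-H')\to W_{10}$ is zero) and on the fiberwise description of $Q_4^*$ as the annihilator of the quadrics in $V_{10}$ through the $3$-plane spanned by $\bp(F_{s_1}^*)$ and $\bp(F_{s_2}^*)$, using that the section $v$ has rank at most $6$ as an element of $V_{10}\otimes W_{10}$. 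Without these ingredients the final assertion remains unproved.
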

In fact, the two hyper-K\"ahler fourfolds $S^{[2]}$ and $M_{S'}(2,h',7)$ are isomorphic (and the isomorphism is compatible with the embeddings of $X'$ above):
\begin{thm}
    There is a natural isomorphism between $S^{[2]}$ and $M_{S'}(2,h',7)$ given as follows:
    Every stable sheaf $K'$ over $S'$ with Mukai vector $(2,h',7)$ is a quotient sheaf of $E_8'$ and fits into a short exact sequence:
    \begin{equation}
            0\rightarrow \phi_\shE(\sho_Z)\rightarrow E_8'\rightarrow K'\rightarrow0. 
        \end{equation}
    where $Z$ is a length 2 subscheme of $S$ and $\phi_\shE:D^b(S)\rightarrow D^b(S')$ is the Fourier-Mukai transform associated to the normalized universal sheaf over $S\times S'$. The assignment of $Z$ to $[K']\in M_{S'}(2,h',7)$ gives the desired isomorphism.
\end{thm}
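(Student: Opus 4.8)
The plan is to package the assignment $Z\mapsto K'$ as a single morphism $a\colon S^{[2]}\to M_{S'}(2,h',7)$ and then to prove it is an isomorphism by the cheapest possible route: I will show that $a$ is a morphism and that it is injective. Since $M_{S'}(2,h',7)$ is an irreducible projective fourfold (its Mukai vector $(2,h',7)$ is primitive with $\langle(2,h',7),(2,h',7)\rangle=2$) and $S^{[2]}$ is an irreducible projective fourfold as well, an injective morphism between them is automatically proper, quasi-finite and dominant, hence bijective; and a bijective morphism between smooth projective varieties over $\bc$ is an isomorphism by Zariski's main theorem. In particular the surjectivity of $a$ — that is, the assertion that \emph{every} stable $K'$ fits into the stated sequence — will come for free once $a$ is an injective morphism, so no separate ``every $K'$ is a quotient of $E_8'$'' argument is needed.

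First I would fix the numerics. Using that $\phi_\shE^H$ is an isometry of the rank-three algebraic Mukai lattices together with the normalizations $\phi_\shE(\sho_S)=E_8'[-2]$ and $\phi_\shE(F)=G_4'$, one computes $\phi_\shE^H(\mukai(\sho_p))=(3,h',5)$, whence $\phi_\shE^H(\mukai(\sho_Z))=2(3,h',5)=(6,2h',10)$ and $\mukai(E_8')-(6,2h',10)=(2,h',7)$; this is exactly the bookkeeping that makes the short exact sequence numerically possible. Next I would check that $\sho_Z$ is $\mathrm{WIT}_0$: for a reduced point $\phi_\shE(\sho_p)=\shE|_{\{p\}\times S'}$ is a sheaf, stable of class $(3,h',5)$ — so $p\mapsto\phi_\shE(\sho_p)$ realizes $S$ as the Fourier–Mukai dual moduli space $M_{S'}(3,h',5)$ — and for length-two $Z$ one writes $\sho_Z$ as an extension of point-sheaves to conclude that $\phi_\shE(\sho_Z)$ is again a sheaf, of class $2(3,h',5)$. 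Finally, full faithfulness of $\phi_\shE$ and $E_8'=\phi_\shE(\sho_S)[2]$ give $\Hom_{S'}(\phi_\shE(\sho_Z),E_8')=\Ext^2_S(\sho_Z,\sho_S)=\bc^2$ with $\Ext^{>0}_{S'}(\phi_\shE(\sho_Z),E_8')=0$; for reduced $Z$ this two-dimensional space splits as $\Hom(\phi_\shE(\sho_{p_1}),E_8')\oplus\Hom(\phi_\shE(\sho_{p_2}),E_8')$ with one-dimensional summands, so the image subsheaf, and hence the cokernel, does not depend on the chosen injective representative.

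The heart of the matter — and the step I expect to be the main obstacle — is to show that the canonical map $\phi_\shE(\sho_Z)\to E_8'$ is injective with a torsion-free, \emph{stable} cokernel $K'(Z)$ of class $(2,h',7)$, uniformly in $Z$. The cleanest input is the rigidity of slopes forced by $\mathrm{Pic}(S')=\bz h'$: with respect to $h'$ one has $\mu(\phi_\shE(\sho_p))=10<\mu(E_8')=\tfrac{90}{8}<\mu(K')=15$. Stability of the cokernel then follows from the $\mu$-stability of $E_8'$ alone: a destabilizing saturated rank-one $A\subset K'$ has $\mu(A)=30a\ge 15$, so $a\ge 1$, and its rank-seven preimage $\tilde A\subset E_8'$ has $\mu(\tilde A)=\tfrac{30(2+a)}{7}\ge\tfrac{90}{7}>\tfrac{90}{8}$, contradicting stability of $E_8'$; the same computation kills a rank-preserving saturation, hence any codimension-one torsion in the cokernel. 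What remains genuinely delicate is (i) injectivity of the map for every $Z$, i.e.\ that the rank-three images of $\phi_\shE(\sho_{p_1})$ and $\phi_\shE(\sho_{p_2})$ meet trivially in $E_8'$ when $p_1\ne p_2$ (and the analogous transversality for infinitely near points), which ultimately reflects $\Hom_S(\sho_{p_1},\sho_{p_2})=0$; (ii) the vanishing of zero-dimensional torsion in the cokernel, which I would extract from a degeneracy-locus count (on the surface $S'$ a $6\to 8$ bundle map drops rank in expected codimension $3$, i.e.\ not at all); and (iii) the behaviour over the non-reduced locus of $S^{[2]}$, which I would treat by flatness and specialization rather than by hand.

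Granting stability, I would globalize: applying the relative Fourier–Mukai transform to the structure sheaf of the universal subscheme $\mathcal{Z}\subset S^{[2]}\times S$ produces a coherent sheaf on $S^{[2]}\times S'$, flat over $S^{[2]}$, restricting to $\phi_\shE(\sho_Z)$ on each fibre; taking the cokernel of its universal map into $E_8'$ yields a flat family of stable sheaves of class $(2,h',7)$, and by the universal property of the fine moduli space $M_{S'}(2,h',7)$ this family defines the morphism $a$. For injectivity, note that for $K'=K'(Z)$ the sequence gives $\Hom_{S'}(E_8',K')\cong\Hom_{S'}(E_8',E_8')=\bc$ (the two outer terms $\Hom_{S'}(E_8',\phi_\shE(\sho_Z))=\Ext^{-2}_S(\sho_S,\sho_Z)$ and $\Ext^1_{S'}(E_8',\phi_\shE(\sho_Z))=\Ext^{-1}_S(\sho_S,\sho_Z)$ both vanish), so the surjection $E_8'\twoheadrightarrow K'$ is canonical up to scalar and its kernel $\phi_\shE(\sho_Z)$ is intrinsic to $K'$. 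Applying $\phi_\shE^{-1}$ recovers $\sho_Z$, and since the annihilator ideal $\mathrm{Ann}(\sho_Z)=\mathcal{I}_Z$ recovers $Z$, the map $a$ is injective. Combined with the dimension-and-properness argument of the first paragraph, this shows that $a$ is an isomorphism, and its surjectivity is exactly the statement that every stable $K'$ arises from a length-two $Z$ through the displayed short exact sequence.
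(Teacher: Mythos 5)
Your route runs in the opposite direction from the paper's. The paper first constructs the \emph{regular} morphism $M_{S'}(2,h',7)\to S^{[2]}$, by proving that $\mathcal{H}^1(p_{1*}(\shE^*\otimes p_2^*K'))$ is the structure sheaf of a length-two subscheme for \emph{every} $K'$; the key step there is that $\Hom(E_8',K')\neq 0$ for all $[K']$, obtained by semicontinuity from the generic case, after which an arbitrary nonzero map $E_8'\to K'$ is analysed. Your map $S^{[2]}\to M_{S'}(2,h',7)$ appears in the paper only as a rational inverse, defined for general reduced $Z$, and the paper closes the argument with the birational rigidity of $S^{[2]}$ (movable cone equals nef cone, no nontrivial birational self-maps) rather than with Zariski's main theorem. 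Your numerics, your slope-stability argument for the cokernel (pulling a destabilizing rank-one subsheaf of $K'$ back to a rank-seven subsheaf of $E_8'$), and your injectivity argument via $\dim\Hom(E_8',K')=1$ are all correct, and the stability argument is arguably cleaner than the paper's, which routes through $H^1(S,E(-h))=0$ for general $E$.

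The gap is that your entire strategy stands or falls on the morphism $a$ being defined at \emph{every} point of $S^{[2]}$, and that is exactly the part you defer. Items (i)--(iii) -- injectivity of $\phi_\shE(\sho_Z)\to E_8'$ for all $Z$, absence of zero-dimensional torsion in the cokernel, and the non-reduced locus -- are not side issues: without them you only obtain a birational map, and then both the ``free'' surjectivity and the ZMT conclusion evaporate, leaving you needing precisely the hyper-K\"ahler rigidity input the paper uses. The appeal to ``flatness and specialization'' over the non-reduced locus is circular, since flatness of the cokernel family over $S^{[2]}$ is equivalent to the fiberwise injectivity and constancy of Hilbert polynomial that you are trying to establish. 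The missing steps do appear fillable, but by real arguments, not a degeneracy-locus heuristic: for reduced $Z$ the kernel $N$ of $E_1'\oplus E_2'\to E_8'$ injects into each $E_i'$, and stability forces either $\mu(N)<10$, whence the image of $E_1'\oplus E_2'$ in $E_8'$ has slope at least $12>\frac{90}{8}$ (contradiction), or $N$ generically equal to $E_1'$, whence $0\neq\Hom(N,E_2')=\Hom(E_1',E_2')=0$ by reflexivity (contradiction); and zero-dimensional torsion $T$ of length $t$ forces $K'/T$ to be stable of vector $(2,h',7+t)$, so $t\leq 1$, while $t=1$ is impossible because the kernel of the induced surjection $E_8'\twoheadrightarrow F'$ would contain $\phi_\shE(\sho_Z)$ with quotient of Mukai vector $(0,0,-1)$. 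The non-reduced case still needs its own argument. As written, the proposal identifies a viable skeleton but leaves its load-bearing step unproved.
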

\begin{remark}
    In parallel to the fact that the structure sheaf of every length two zero-dimensional subscheme of $S$ is a quotient of $\sho_S$, here the situation is that every stable sheaf $K'$ over $S'$ with Mukai vector $(2,h',7)$ is a quotient sheaf of $E_8'$, where $E_8'$ is a rank 8 vector bundle over $S'$ obtained by applying the Fourier-Mukai transform to $\sho_S$: $\phi_\shE(\sho_S)=E_8'[-2]$.
\end{remark}
\begin{remark}
    The existence of an isomorphism between $S^{[2]}$ and $M_{S'}(2,h',7)$ has been proven in \cite[Theorem 3.3]{KapustkaVanGeemen}. Here we present an explicit construction of the isomorphism, which will give lots of geometric information.
\end{remark}
All these results help us understand better the projective models constructed by Mukai in \cite{Mukai16}. A few results about the projective geometry and more information about the rank 8 spherical bundle $E_8$ are also provided, with the hope that they will help to prove \cref{conjecture}.
\paragraph{Acknowledgements.}
I would like to thank my advisors Laurent Manivel and Thomas Dedieu for leading me to this interesting topic. I am also grateful to Pietro Beri for useful conversations.

I would like to thank Frédéric Han for his patient explanation of \cite{Frederic}. I get to understand a lot of the underlying geometry and get the direction of thinking from the discussions with him. The proof of \cref{omega 2 form} was completed with his help in using Macaulay2. 
%\cref{rank 6 subbundle remark} and (\ref{rank 6 subbundle}) gives indication of a corresponding rank 6 subbundle over $M_{S'}(2,h',7)$. 
\section{Genus 16 Polarized K3 surfaces}
     We gather in this section some known information about general genus 16 polarized K3 surfaces $(S,h)$. 
     
     By the work of Mukai \cite{Mukai16}, a general $(S,h)$ can be obtained as a locally complete intersection (with respect to vector bundles) in a 12-dimensional quiver moduli space $\mathcal{T}$, defined as the moduli space of stable representations of dimension vector $(3,2)$ for the 4-Kronecker quiver (with respect to some stability parameter). Then $\mathcal{T}$ is a smooth projective Fano variety of Picard rank 1 and index 4. It comes with a (``normalized") universal family of representations $\mathscr{E}\rightarrow\mathcal{F}\otimes V$, where $\mathscr{E},\mathcal{F}$ are globally generated vector bundles of rank 3,2 respectively with $c_1(\mathscr{E})=c_1(\mathcal{F})=:\mathcal{O}_{\mathcal{T}}(1)$ equal to the ample generator of $\Pic(\mathcal{T})$, and $V^*$ is the 4-dimensional vector space generated by the four arrows in the 4-Kronecker quiver. We consider a general global section of $\mathscr{E}^{\oplus 2}\oplus \mathcal{F}^{\oplus 2}$, whose zero locus was proved in \cite{Mukai16} to be a K3 surface $S$. The genus of the polarized pair $(S,h:=\mathcal{O}_{\mathcal{T}}(1)|_S)$ is 16. Moreover, a general genus 16 polarized K3 surface can be obtained in this way.

     Naturally, we have two interesting bundles over $S$, namely $E:=\mathscr{E}|_S,  \ F:=\mathcal{F}|_S$ with Mukai vector $(3,h,5)$ and $(2,h,8)$ respectively. These vector bundles are all slope stable, hence $F$ is the unique stable vector bundle over $S$ with given Mukai vector $(2,h,8)$. However, $E$ is not unique and the moduli space $S'$ of stable sheaves of Mukai vector $(3,h,5)$ is the unique nontrivial Fourier-Mukai partner of $S$ (which will be studied in the next section). In view of \cref{ext1=0} and \cref{stable rep}, for any $[E]\in S'$, the natural map $E\rightarrow F\otimes \Hom(E,F)^*$ is also a stable family of representations for the 4-Kronecker quiver and will induce a morphism from $S$ to $\mathcal{T}$ by the universal property of $\mathcal{T}$ (after fixing an isomorphism $\Hom(E,F)^*\cong V$). When $[E]\in S'$ varies, the four dimensional space $\Hom(E,F)$ will also vary and they patch together to give a rank 4 vector bundle $G_4'$ over $S'$, which is a slope stable spherical bundle (\cref{StableG4}).

The possible Debarre-Voisin construction was conducted in \cite{DHOV} as follows. Let $S^{[2]}$ be the Hilbert scheme of two points on $S$. Let $q_1:\mathrm{Bl}_\Delta(S\times S)\rightarrow S^{[2]}$ be the 2:1 quotient map and let $q_2:\mathrm{Bl}_\Delta(S\times S)\rightarrow S\times S$ be the blow up morphism. The tautological transform of any vector bundle over $S$, for example $F$, is defined to be $q_{1*}(q_2^* F)=: F^{[2]}$. Then there exists a natural surjective bundle map $\mathrm{Sym}^2(F^{[2]})\rightarrow (\mathrm{Sym}^2F)^{[2]}$, whose kernel is a rank 4 vector bundle $Q_4$, with $c_1(Q_4)=2h-7\delta$ (where $2\delta$ is the class of the exceptional divisor of the Hilbert-Chow morphism $S^{[2]}\rightarrow S^{(2)}$).
$$0\rightarrow Q_4\rightarrow \mathrm{Sym}^2(F^{[2]})\rightarrow (\mathrm{Sym}^2F)^{[2]}\rightarrow 0.$$
Using the above short exact sequence, $H^0(S^{[2]},Q_4)$ is equal to the kernel of $\mathrm{Sym}^2(H^0(S,F))\rightarrow H^0(S,\mathrm{Sym}^2F)$.

Let $X$ denote the $\bp^1$-bundle $\bp_{S}(F^*)$, let $\sho_{X}(-H)$ be the relative tautological subbundle and let $W_{10}$ denote $H^0(S,F)=H^0(X,\sho_X(H))$. It is shown in \cite{Frederic} that $\sho_X(H)$ is very ample and induces an embedding of $X$ into $\bp(W_{10}^*)$. Moreover, $Q_4$ is globally generated by $H^0(S^{[2]},Q_4)$, which is 10-dimensional and denoted by $V_{10}$. In geometric terms, $V_{10}\subset \mathrm{Sym}^2W_{10}$ is a family of quadrics in $\bp(W_{10}^*)$ which cut out $X$ in $\bp(W_{10}^*)$ (\cite[Theorem 2.3.2]{Frederic}).

Viewing elements of $V_{10}$ as quadrics over $\bp(W_{10}^*)$, we can consider the linear syzygies between them: $\Ker(V_{10}\otimes W_{10}\rightarrow S^3W_{10})$, which is actually an 8-dimensional vector space that we denote by $V_8$, together with the canonical map $V_8\rightarrow V_{10}\otimes W_{10}$. It turns out that $V_8$ is the space of global sections of a rank 4 vector bundle $G_4$ constructed in \cite[Lemma 3.3.2]{Frederic} (which corresponds to the $G_4'$ above by \cref{StableG4}. Such a bundle also appeared in \cite[Proposotion 7.7]{KuznetsovG4} over a genus 16 K3 surface of higher Picard rank). The rank 4 vector bundle $G_4$ is involved in the Debarre-Voisin construction. There is a short exact sequence over $X$:
\begin{equation}
    0\rightarrow N_X(-2H)\rightarrow V_{10}^*\otimes \sho_X\rightarrow G_4^*(H)\rightarrow 0,\label{N_X}
\end{equation}
where $N_X$ is the normal bundle of the embedding $X\hookrightarrow \bp(W_{10}^*)$ (we will give another explanation in (\ref{rank 6 subbundle})).

A trivector $t_2\in \wedge^3 V_{10}$ was constructed such that the restriction of the 3-form to every fiber of $N_X(-2H)\subset V_{10}^*$ is zero. First one begins with a unique element $\omega$ in $\wedge^2V_8$ (up to scalar) such that the image of $\omega$ via the map $\wedge^2V_8=\wedge^2H^0(S,G_4)\rightarrow H^0(S,\wedge^2G_4)$ is zero (we will give another possible understanding of $\omega$ in \cref{2form'}). Take the second wedge product of $V_8\rightarrow V_{10}\otimes W_{10}$ and apply Schur functor, we get $\wedge^2 V_8\rightarrow \wedge^2 V_{10}\otimes S^2W_{10}$. The image of $\omega$ via this map actually lies in $\wedge^2 V_{10}\otimes V_{10}\subset \wedge^2 V_{10}\otimes S^2W_{10}$, and gives the 3-form $t_2\in \wedge^3V_{10}$. As a result, this induces a morphism from $X$ to $DV(t_2)\subset G(6,V_{10}^*)$.

\section{Fourier-Mukai Partner $S'$}
Let $(S,h)$ a general polarized K3 surface of genus 16 with $\Pic(S)=\bz h$. Consider the Mukai vector $v_0=(2,h,8)$, with $v_0^2=-2$. By \cite[Theorem 0.1]{YoshiokaExistence}, $M_S(v_0)$ consists of one single point, which we denote by $[F]$. The usual argument about comparing $F$ with $F^{**}$ shows that $F$ is locally free. By construction, $F$ is Gieseker stable and thus slope semistable. Since $\Pic(S)=\bz$, it follows immediately that $F$ is slope stable.

\begin{lemma}
    For a general genus 16 polarized K3 surface $(S,h)$, we have $H^1(S,F)=H^2(S,F)=0$ and $F$ is globally generated by $W_{10}:=H^0(S,F)$.\label{h^1(F)=0}
\end{lemma}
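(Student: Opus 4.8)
The plan is to separate the three assertions, namely $H^2(S,F)=0$, global generation, and $H^1(S,F)=0$, and to reduce everything to the last, which is the only real difficulty. First I would pin down the numerics: Mukai's Riemann--Roch on the K3 surface gives $\chi(F)=-\langle v(\sho_S),v(F)\rangle=-\langle(1,0,1),(2,h,8)\rangle=10$. Since $F$ has rank $2$, $F^*\cong F(-h)$ is again stable, of slope $-h^2/2=-15<0$; a nonzero section of $F^*$ would produce a rank-one subsheaf of $F^*$ of slope $\ge 0$, contradicting stability, so $H^0(S,F^*)=0$, and Serre duality on the K3 yields $H^2(S,F)\cong H^0(S,F^*)^*=0$. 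Hence $h^0(F)=10+h^1(F)$, and the whole lemma reduces to the vanishing $h^1(F)=0$.

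Global generation is then almost free: by construction $F=\mathcal{F}|_S$, and $\mathcal{F}$ is globally generated on $\mathcal{T}$, so the surjection $H^0(\mathcal{T},\mathcal{F})\otimes\sho_{\mathcal{T}}\to\mathcal{F}$ restricts to a surjection $H^0(\mathcal{T},\mathcal{F})\otimes\sho_S\to F$. This factors through the evaluation of $W_{10}=H^0(S,F)$, so $F$ is globally generated by $W_{10}$.

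For the crux, $H^1(S,F)=0$, I would pull the computation back to the ambient Fano $\mathcal{T}$. Since $S\subset\mathcal{T}$ is the zero locus of a regular section $s$ of $\mathscr{G}:=\mathscr{E}^{\oplus2}\oplus\mathcal{F}^{\oplus2}$, the Koszul complex $0\to\wedge^{10}\mathscr{G}^*\to\cdots\to\mathscr{G}^*\to\sho_{\mathcal{T}}\to\sho_S\to0$ resolves $\sho_S$. Tensoring by $\mathcal{F}$ and running the hypercohomology spectral sequence computes $H^\bullet(S,F)$ from the groups $H^q(\mathcal{T},\mathcal{F}\otimes\wedge^i\mathscr{G}^*)$; the contributions to $H^1(S,F)$ come only from the antidiagonal terms $H^q(\mathcal{T},\mathcal{F}\otimes\wedge^{q-1}\mathscr{G}^*)$ with $q\ge1$, whereas $h^0(F)$ is read off from the Koszul differential $\lrcorner\,s$ acting on the degree-zero row $H^0(\mathcal{T},\mathcal{F}\otimes\wedge^i\mathscr{G}^*)$ (where terms such as $\Hom(\mathcal{F},\mathcal{F})$ and $\Hom(\mathscr{E},\mathcal{F})$ appear). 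The main obstacle is precisely the cohomology bookkeeping on $\mathcal{T}$: one must establish the relevant Bott-type vanishings, using that $\mathscr{E},\mathcal{F}$ are globally generated with $c_1=\sho_{\mathcal{T}}(1)$ and that $\mathcal{T}$ is Fano of index $4$, so that the entire $q\ge1$ antidiagonal dies. Because the cohomology of the quiver moduli $\mathcal{T}$ is less transparent than that of a homogeneous space, as a safeguard I would instead invoke upper semicontinuity of $h^1(F)$ in Mukai's family: it then suffices to verify $h^1(F)=0$ for a single explicit member, which can be done on the concrete model, either by extracting it from \cite{Mukai16} or by a direct Macaulay2 computation in the spirit of this paper's other computational results. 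Once $h^1(F)=0$ is established, $h^0(F)=10=\dim W_{10}$ follows at once, completing the proof.
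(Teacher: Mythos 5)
Your proposal is correct and, on the essential point $H^1(S,F)=0$, lands on the same argument as the paper: by semicontinuity it suffices to verify the vanishing for a single example, which is done in \cite{Mukai16} (the Koszul-complex computation on $\mathcal{T}$ that you sketch is abandoned by you anyway, and your treatments of $\chi(F)=10$, of $H^2(S,F)=0$ via Serre duality and stability, and of global generation by restricting the surjection from $\mathcal{T}$ are all fine). The one thing you do not reproduce is the paper's additional, deformation-free proof of $H^1(S,F)=0$ --- if $\Ext^1(F,\sho_S)\neq 0$, the extension bundle $B$ in $0\to\sho_S\to B\to F\to 0$ is shown to be slope stable with $v(B)=(3,h,9)$ and $v(B)^2=-24<-2$, a contradiction --- which the paper records precisely because the same argument applies verbatim to any locally free stable sheaf of Mukai vector $(2,h,7)$ and is reused later (e.g.\ in \cref{F' locally free}).
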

\begin{proof}
    Notice that $\chi(S,F)=10$. We only need to verify the statements for one single example of genus 16 polarized K3 surface, and this has been done in \cite{Mukai16}. 
    
    We give an alternative proof of the equality $H^1(S,F)=0$ here for future convenience, because the same proof will hold for any locally free stable sheaf with Mukai vector $(2,h,7)$. We argue by contradiction.
    If $H^1(S,F)\cong\Ext^1(F,\sho)^*\neq 0$, then there exists a nontrivial extension vector bundle $B$ which fits into the following short exact sequence:
    $$0\rightarrow \sho\rightarrow B\rightarrow F\rightarrow0.$$
    We claim that the vector bundle $B$ is slope semistable (and thus slope stable). Otherwise there exists a slope semistable subsheaf $B'\subset B$ with $\mu(B')>\mu(B)=\frac{1}{3}(h,h)$ and hence $\mu(B')\geq\frac{1}{2}(h,h)$. We can moreover assume that $B'$ is reflexive. The composition map $B'\hookrightarrow B\rightarrow F$ cannot be zero and must be injective by the slope stability of $\sho$ and $F$. Moreover, the slope stability of $F$ forces that $\mathrm{rank}(B')=2$ and $c_1(B')=h$. Thus the two reflexive sheaves $B',F$ coincide outside finitely many points and they must be the same. This gives a splitting of the above short exact sequence and this contradiction shows that $B$ is slope stable.

    By calculation, $v(B)=(3,h,9)$. However the self-pairing of the Mukai vector of a slope stable sheaf should be at least $-2$. This leads to a contradiction and shows that $H^1(S,F)=0$.
\end{proof}

We also consider the Mukai vector $(3,h,5).$ By \cite[Lemma 1.2]{Yoshioka}, all the Gieseker semistable sheaves $E$ with Mukai vector $(3,h,5)$ are actually slope stable vector bundles.

\begin{lemma}
    For any $E\in M_{S}(3,h,5)$, we have $\Hom(F,E)=\Ext^1(F,E)=0$ and $\Hom(E,F)=\bc^4$.\label{ext1=0}
\end{lemma}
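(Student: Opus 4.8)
The plan is to deduce the whole statement from the Mukai pairing once the two $\Hom$/$\Ext$ vanishings are in place. Since $(S,h)$ has genus $16$ we have $(h,h)=30$, hence $\mu(F)=15$ and $\mu(E)=10$. The Mukai pairing gives $\langle v(F),v(E)\rangle=(h,h)-2\cdot5-3\cdot8=-4$, so that $\chi(F,E)=-\langle v(F),v(E)\rangle=4$. Serre duality on the K3 surface $S$ gives $\Ext^2(F,E)\cong\Hom(E,F)^*$, whence $4=\dim\Hom(F,E)-\dim\Ext^1(F,E)+\dim\Hom(E,F)$. Thus it suffices to prove $\Hom(F,E)=0$ and $\Ext^1(F,E)=0$; the equality $\dim\Hom(E,F)=4$ then follows for free.

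The vanishing $\Hom(F,E)=0$ is immediate from slope stability. A nonzero $\phi\colon F\to E$ has image $\image(\phi)$ of rank at most $2<3$, so it is a proper subsheaf of $E$. If $\phi$ is injective then $F\hookrightarrow E$ contradicts $\mu(F)=15>10=\mu(E)$ by the stability of $E$; otherwise $\image(\phi)$ is a proper quotient of the stable sheaf $F$, so $\mu(\image(\phi))>15$, while as a proper subsheaf of $E$ it has $\mu(\image(\phi))<10$ — again impossible.

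For $\Ext^1(F,E)=0$ I would argue by contradiction, in the spirit of \cref{h^1(F)=0}. A nonsplit class $e\in\Ext^1(F,E)$ gives a rank $5$ bundle $B$ with $0\to E\to B\xrightarrow{\pi}F\to0$, so that $v(B)=(5,2h,13)$, $\mu(B)=12$ and $v(B)^2=(2h,2h)-2\cdot5\cdot13=-10$. The goal is to show that $B$ is slope stable, hence simple, which is absurd because a simple sheaf satisfies $\dim\Ext^1(B,B)=v(B)^2+2=-8<0$. Let $B'\subset B$ be a nonzero subsheaf of rank $r'\le4$ and suppose $\mu(B')\ge12$; write $B''=B'\cap E\subseteq E$ and $I=\pi(B')\subseteq F$, so $0\to B''\to B'\to I\to0$. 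Using $\Pic(S)=\bz h$ together with the stability of $E$ and $F$, every rank $1$ subsheaf of $E$ or of $F$ has $(c_1,h)\le0$, while rank $2$ subsheaves of $F$ and rank $3$ subsheaves of $E$ have $(c_1,h)\le30$. Running through the finitely many rank pairs $(\mathrm{rank}\,B'',\mathrm{rank}\,I)$ and writing $\mu(B')$ as the weighted average of $\mu(B'')$ and $\mu(I)$, one finds $\mu(B')<12$ in every case except the single borderline configuration $B''=0$, $\mathrm{rank}\,I=2$, $c_1(I)=h$ — in which $B'\cong I$ is a full rank subsheaf of $F$ lifting to $B$ with $B'\cap E=0$.

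The crux is to rule out this borderline case. Such a $B'$ is exactly a splitting of the pulled-back extension $0\to E\to\pi^{-1}(I)\to I\to0$, whose class is the image of $e$ under the restriction map $\Ext^1(F,E)\to\Ext^1(I,E)$. From $0\to I\to F\to F/I\to0$ with $F/I$ of finite length, the term $\Ext^1(F/I,E)\cong H^1(S,E^\vee\otimes(F/I))^*$ vanishes because $F/I$ is $0$-dimensional; hence the restriction map is injective and sends the nonzero class $e$ to a nonzero class, so no such splitting exists (and $I=F$ would split the whole sequence, again excluded). Therefore $B$ has no destabilizing subsheaf, is slope stable, and the numerical contradiction $v(B)^2=-10<-2$ forces $\Ext^1(F,E)=0$. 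The main obstacle is precisely this borderline rank $2$ case: the naive slope bookkeeping leaves it open, and closing it requires the vanishing $\Ext^1(F/I,E)=0$ for the torsion quotient, which is the one genuinely non-formal input.
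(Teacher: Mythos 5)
Your proof is correct and follows essentially the same route as the paper's: $\Hom(F,E)=0$ by comparing slopes, $\Ext^1(F,E)=0$ by showing that a nonsplit extension $B$ with $v(B)=(5,2h,13)$ would be slope stable and hence satisfy $v(B)^2\geq -2$, contradicting $v(B)^2=-10$, and $\dim\Hom(E,F)=4$ from Riemann--Roch and Serre duality. The only difference is that you prove the slope stability of $B$ by hand (correctly disposing of the one borderline destabilizing configuration via $\Ext^1(F/I,E)=0$), whereas the paper outsources this step to Yoshioka's extension lemma.
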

\begin{proof}
    Both $E$ and $F$ are slope stable bundles with $\mu(E)<\mu(F)$, thus we have $\Hom(F,E)=0$.
    By \cite[Lemma 2.1]{ToshiokaLemma}, any nonzero element in $\Ext^1(F,E)$ would induce an extension vector bundle $B$ which was proved to be slope stable. In particular, we should have $v(B)^2\geq-2$. This is absurd since $v(B)=(5,2h,13)$. The last fact follows from Serre duality and Grothendieck-Riemann-Roch formula.
\end{proof}

We let $S'$ denote the projective moduli space $M_{S}(3,h,5)$ (which is also a K3 surface), and let $\shE^*$ denote the universal sheaf on $S\times S'$. As mentioned above, $\shE^*$ is actually a vector bundle and every fiber of it is slope stable. Using $\shE$ as the kernel for the Fourier-Mukai transform, we get an equivalence of derived categories $\phi_\shE: D^b(S)\cong D^b(S')$ together with a Hodge isometry between extended Mukai lattices $\tilde{H}^*(S,\bz)\cong\tilde{H}^*(S',\bz)$. In particular, $\Pic(S)\cong \Pic(S')$, and we use $h'$ to denote the corresponding genus 16 polarization on $S'$.

\begin{lemma}
    $S$ is not isomorphic to $S'$. Moreover, $S$ is equal to the moduli space of stable sheaves on $S'$ with Mukai vector $(3,h',5)$.
\end{lemma}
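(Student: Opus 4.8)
The plan is to pin down the cohomological Fourier--Mukai transform $\phi^H:\tilde H^*(S,\bz)\to\tilde H^*(S',\bz)$ on the rank-three algebraic Mukai lattice and then read off \emph{both} assertions from it. I would work in the basis $(1,0,0),(0,h,0),(0,0,1)$ of the algebraic lattice of $S$ (and the corresponding primed basis on $S'$), in which the Mukai pairing reads $\langle(r,a,s),(r',a',s')\rangle=30aa'-rs'-r's$ since $(h,h)=(h',h')=30$.

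First I would determine $\phi^H$ from three known values. The identities $\phi_\shE(\sho_S)=E_8'[-2]$ and $\phi_\shE(F)=G_4'[0]$ give $\phi^H(1,0,1)=v(E_8')=(8,3,17)$ and $\phi^H(2,1,8)=v(G_4')=(4,1,4)$. For a third value, note that the quasi-inverse of $\phi_\shE$ has kernel $\shE^\vee[2]=\shE^*[2]$ on the K3 surface $S\times S'$, so it sends $\sho_{[E]}\mapsto E[2]$ for $[E]\in S'$; hence $\phi^H(3,1,5)=\phi^H(v(E))=v(\sho_{[E]})=(0,0,1)$. These three vectors are $\bq$-independent, and solving the resulting linear system yields $\phi^H(0,0,1)=(3,1,5)$, $\phi^H(1,0,0)=(5,2,12)$, and $\phi^H(0,1,0)=(-30,-11,-60)$. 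As a consistency check one verifies that the resulting integral matrix has determinant $1$ and preserves the Mukai pairing, so it is a genuine Hodge isometry.

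For the second assertion, observe that $\phi_\shE(\sho_x)=\shE|_{\{x\}\times S'}=:\shE_x$; because $\shE$ is a vector bundle (hence flat over $S$), this derived restriction is an honest sheaf in degree $0$, and by the previous step it has Mukai vector $v(\shE_x)=(3,h',5)$, which is primitive with $v^2=0$. The family $\shE$ on $S\times S'$, flat over $S$, then induces a classifying morphism $f:S\to M_{S'}(3,h',5)$, $x\mapsto[\shE_x]$, once stability of every $\shE_x$ is known. This $f$ is injective because $\phi_\shE$ is fully faithful (so $\shE_x\cong\shE_y$ forces $\sho_x\cong\sho_y$), and it is a morphism between smooth connected surfaces of the same dimension (as $v^2=0$ makes $M_{S'}(3,h',5)$ a K3 surface); a bijective morphism onto a smooth surface is an isomorphism. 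I expect the stability of $\shE_x$ for \emph{every} $x\in S$ to be the main obstacle: one route is to transport Gieseker stability across the equivalence via the induced Bridgeland stability condition, using that $h'$ generates $\Pic(S')$ so that the primitive vector $(3,h',5)$ has no strictly semistable representatives; an alternative, more hands-on route is to exclude rank-$1$ and rank-$2$ destabilizing reflexive subsheaves by slope and Mukai-vector estimates, exactly as in the proof of \cref{ext1=0}.

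For the first assertion I would use the remaining value $\phi^H(0,1,0)=(-30,-11,-60)$. The algebraic Mukai lattice is $N\cong U\oplus\langle 30\rangle$, with discriminant group $A_N\cong\bz/30$ generated by $\tfrac1{30}(0,1,0)$; reducing $\tfrac1{30}\phi^H(0,1,0)=(-1,-\tfrac{11}{30},-2)$ modulo $N$ shows that $\phi^H$ acts on $A_N$ by multiplication by $-11\equiv 19\pmod{30}$. For a general genus $16$ surface the transcendental lattice admits only $\pm 1$ as Hodge auto-isometries, so by the Torelli theorem an isomorphism $S\cong S'$ (necessarily taking $h$ to $h'$) would force this discriminant action to be $\pm 1\pmod{30}$. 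Since $19\not\equiv\pm1\pmod{30}$, we conclude $S\not\cong S'$; this also identifies $S'$ as the nontrivial one of the two Fourier--Mukai partners predicted for $n=15=3\cdot 5$. This last gluing/Torelli step is routine, and the genuine work lies in the stability statement of the previous paragraph.
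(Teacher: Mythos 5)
Your route is genuinely different from the paper's. The paper disposes of both assertions by citing the Fourier--Mukai partner counting theorem \cite[Theorem 2.1]{FMofK3} twice: the only partners of $S$ are $M_S(1,h,15)$ and $M_S(3,h,5)$, the former is identified with $S$ by an elementary double-dual argument, so $S'\not\cong S$; then the same theorem applied to $S'$ forces the nontrivial partner $S$ of $S'$ to be $M_{S'}(3,h',5)$. Your first half in effect reproves the relevant instance of that counting theorem. The computation of $\phi^H$ is correct (note that $v(E_8')=(8,3h',17)$, as you use; the three input vectors do determine the matrix, and your answer is consistent with the paper's $H\phi_\shE(0,h,10)=-(0,h',10)$), and the discriminant multiplier $-11\equiv 19\pmod{30}$ with $19\not\equiv\pm1\pmod{30}$ correctly rules out $S\cong S'$ via Torelli, at the cost of invoking that a very general $(S,h)$ has only $\pm\id$ as Hodge auto-isometries of its transcendental lattice. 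That part is sound and more self-contained than the paper's citation.

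For the second assertion, however, there is a genuine gap exactly where you flag it: the stability of $\shE_x=\shE|_{\{x\}\times S'}$ for \emph{every} $x\in S$. This is not a routine estimate. Simplicity of $\shE_x$ (which follows from full faithfulness) does not imply stability, and the hands-on route you sketch does not transpose directly from \cref{h^1(F)=0} or \cref{ext1=0}: there the contradiction comes from producing a \emph{slope-stable extension} whose Mukai vector has square less than $-2$, whereas a destabilizing sub- or quotient sheaf of $\shE_x$ is not handed to you as such an extension, and one needs either a Hodge-index/wall analysis on the rank-$1$ and rank-$2$ classes or the $\psi_\shE,\hat{\psi}_\shE$ spectral-sequence argument of \cref{StableG4} applied to $k(x)$. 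This stability statement is essentially Mukai's duality theorem for two-dimensional moduli spaces, i.e., precisely the content the paper outsources to \cite{FMofK3} and \cite{Mukai2}. If you close that step (the argument of \cref{StableG4} adapts), your construction actually yields more than the lemma claims --- the explicit isomorphism realized by the universal family $\shE$, which the paper only records afterwards in its normalization Setting --- but as written the identification $S\cong M_{S'}(3,h',5)$ is not yet proved.
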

\begin{proof}
    By \cite[Theorem 2.1]{FMofK3}, all possible Fourier-Mukai partners of a genus 16 polarized K3 surface $S$ with $\Pic(S)=\bz.h$ could only be $M_S(1,h,15)$ and $M_S(3,h,5)$ and these two are not isomorphic. The former one is actually isomorphic to $S$ itself: For a torsion free sheaf $\mathcal{I}$ with Mukai vector $(1,h,15)$, its double dual must be equal to $\sho(h)$ and $\sho(h)/\mathcal{I}$ must be the skyscraper sheaf supported on a single point of $S$. Conversely given any point $s\in S$, the kernel of $\sho(h)\rightarrow\sho(h)|_{s}$ is a torsion free sheaf with the prescribed Mukai vector.

    We have seen that $S'=M_S(3,h,5)$ together with the polarization $h'$ is also a genus 16 polarized K3 surface with Picard rank 1 and $S$ is a nontrivial Fourier-Mukai partner of $S'$. Again by \cite[Theorem 2.1]{FMofK3}, we see that $S\cong M_{S'}(3,h',5)$.
\end{proof}

Viewing $S$ as the moduli space of stable sheaves over $S'$ with Mukai vector $(3,h',5)$, we should also have a universal sheaf over $S\times S'$, which we would like to compare with the other universal sheaf $\mathcal{E}^*$. The universal sheaves are only determined up to a twist by line bundles. Mukai proved in \cite{Mukai2} that we can normalize $\mathcal{E}^*$ such that the following setting holds:
\begin{Setting}
    Viewing $S'$ as the moduli space of stable sheaves over $S$ with Mukai vector $(3,h,5)$, the universal sheaf over $S\times S'$ is $\shE^*$. Viewing $S$ as the moduli space of stable sheaves over $S'$ with Mukai vector $(3,h',5)$, the universal sheaf over $S\times S'$ is $\shE$. 
\end{Setting}
Obviously, we have:
\begin{Corollary}
   $ \det(\mathcal{E})=\sho(h'-h)$.
\end{Corollary}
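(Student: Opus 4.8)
The plan is to pin down $\det(\shE)$ by first determining the Picard group of the ambient product $S\times S'$ and then reading off $\det(\shE)$ from its restrictions to the two families of fibres. Write $p\colon S\times S'\to S$ and $q\colon S\times S'\to S'$ for the two projections. Since $S$ and $S'$ are K3 surfaces they are simply connected with $H^1=0$, so the Künneth formula gives $H^2(S\times S',\bz)=p^*H^2(S,\bz)\oplus q^*H^2(S',\bz)$ as a direct sum of Hodge structures (there is no mixed $H^1\otimes H^1$ term, and the cohomology is torsion-free). Intersecting with $H^{1,1}$ and using that $\Pic(S\times S')$ equals its Néron–Severi group (valid since $H^1(\sho_{S\times S'})=0$, again by Künneth), I would conclude $\Pic(S\times S')=p^*\Pic(S)\oplus q^*\Pic(S')=\bz\,p^*h\oplus\bz\,q^*h'$, the last equality using that both surfaces have Picard rank $1$. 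Hence $\det(\shE)=p^*\sho(ah)\otimes q^*\sho(bh')$ for unique integers $a,b$, and it remains only to compute $a$ and $b$.

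To find $a$ I would restrict to a fibre $S\times\{[E]\}$ for $[E]\in S'$. This restriction kills the $q^*h'$ factor, so $\det(\shE)|_{S\times\{[E]\}}=\sho(ah)$; on the other hand it equals $\det(\shE|_{S\times\{[E]\}})$. Since $\shE$ is dual to the universal sheaf $\shE^*$ of $S'=M_S(3,h,5)$ and $\shE^*|_{S\times\{[E]\}}\cong E$ with $c_1(E)=h$, we get $\shE|_{S\times\{[E]\}}\cong E^{\vee}$ with $c_1(E^{\vee})=-h$ (restriction commutes with dualizing because $\shE^*$ is locally free), so $a=-1$. To find $b$ I would instead restrict to a fibre $\{[E']\}\times S'$ for $[E']\in S$; now the $p^*h$ factor dies and $\det(\shE)|_{\{[E']\}\times S'}=\sho(bh')$. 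By the normalization recorded in the Setting, $\shE$ is itself the universal sheaf when $S$ is viewed as $M_{S'}(3,h',5)$, so $\shE|_{\{[E']\}\times S'}\cong E'$ is a stable sheaf with Mukai vector $(3,h',5)$, in particular $c_1(E')=h'$, giving $b=1$. Combining the two computations yields $\det(\shE)=q^*\sho(h')\otimes p^*\sho(-h)=\sho(h'-h)$, as claimed.

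I do not expect a genuine obstacle: the statement is a bookkeeping consequence of the chosen normalization. The only points requiring care are (i) the Picard splitting of the product, which relies specifically on $H^1(S)=H^1(S')=0$ so that no correspondence classes enter the Néron–Severi group, and (ii) tracking the dual and the Mukai-vector normalization consistently, so that the $S$-direction contributes $-h$ (because $\shE$ is the \emph{dual} of the universal sheaf of $S'$) while the $S'$-direction contributes $+h'$ (because $\shE$ itself is the universal sheaf of $S$). Getting these two signs right is the whole substance of the argument.
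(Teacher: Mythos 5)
Your argument is correct and is exactly the computation the paper leaves implicit behind the word ``Obviously'': the K\"unneth splitting of $\Pic(S\times S')$ plus the two fibre restrictions dictated by the normalization in the Setting (the sign $-h$ coming from $\shE$ being the dual of the universal sheaf $\shE^*$ in the $S$-direction, and $+h'$ from $\shE$ itself being the universal sheaf in the $S'$-direction). Nothing further is needed.
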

With this normalized Fourier-Mukai kernel $\mathcal{E}$, we come to investigate its action on the extended Mukai lattices. We focus on the algebraic $(1,1)$ part of the extended Mukai lattice, which is isomorphic to $\bz\oplus NS(S)\oplus \bz$. 
By the setting above, we see that $\phi_{\shE}$ must map the skyscraper sheaf $k(s), \ s\in S$ to a stable vector bundle with Mukai vector $(3,h',5)$. And the inverse Fourier-Mukai transform maps $k(s'),  \ s'\in S'$ to a stable vector bundle with Mukai vector $(3,h,5)$ shifted by 2.
If we use $H\phi_\shE$ to denote the cohomological Fourier-Mukai transform corresponding to $\phi_\shE$, then we have:
$$H\phi_\shE(0,0,1)=(3,h',5), \ H\phi_\shE(3,h,5)=(0,0,1).$$
Since $H\phi_\shE$ preserves the intersection form on the lattice and maps the algebraic $(1,1)$ part to the algebraic $(1,1)$ part, we see that $H\phi_\shE$ must map $(0,h,10)$ to $\pm (0,h',10)$ (which generates the orthogonal subspace to $(3,h',5)$ and $(0,0,1)$). Since $H\phi_\shE$ must map integral elements to integral elements, we conclude that $H\phi_\shE(0,h,10)=-(0,h',10)$. (Otherwise $H\phi_\shE(2,h,8)$ would not be an integral element for example.)

Now we come to consider the image of the rank 2 stable bundle $F$ via the functor $\phi_\shE$. This image must be a rank 4 vector bundle using \cref{ext1=0}, which we denote by $G_4'$.
$$\phi_\shE(F)=G_4'[0].$$
Our next aim is to prove that $G_4'$ is a $h'$-slope stable vector bundle. To do this, we employ an argument which appeared in the proof of Proposition 3.2 in \cite{YoshiokaStable}. To prepare for the proof, we introduce the following terminologies and facts as in \cite{YoshiokaStable}. 

We denote the projections from $S\times S'$ to $S$ and $S'$  by $p_1$ and $p_2$ respectively. For $\mathcal{G}^.\in D^b(S)$, we define the following contravariant functor $\psi_\shE$ as $\psi_\shE(\mathcal{G}^.)=\phi_{\shE^*}(\mathcal{G}^{.} \ \check{} \ )=p_{2*}(\mathcal{E}^*\otimes p_1^*(\mathcal{G}^{.}) \ \check{} \ )$. Using relative Serre duality, we see that its inverse is given by $\hat{\psi}_\shE(\mathcal{G}^.)=\phi_\shE^{-1}(\mathcal{G}^{.} \ \check{} \ )[-2]=p_{1*}(\mathcal{E}^*\otimes p_2^*(\mathcal{G}^{.}) \ \check{} \ )$ for $\mathcal{G}^.\in D^b(S')$. We denote $\mathcal{H}^i(\psi_\shE(\mathcal{G}^.))$ by $\psi_\shE^i(\mathcal{G}^.)$ and denote $\mathcal{H}^i(\hat{\psi}_\shE(\mathcal{G}^.))$ by $\hat{\psi}_\shE^i(\mathcal{G}^.)$. Using the spectral sequence associated to the composition of derived functors $\psi_\shE\circ\hat{\psi}_\shE=\mathrm{Id}_{D^b(S')}$, for any coherent sheaf $A$ over $S'$ we have:
\begin{equation}
    \psi_{\mathcal{E}}^p(\hat{\psi}_\shE^0(A))=0, \ p=1,2 \ \ \ \ \text{and}  \ \ \ \ \psi_{\mathcal{E}}^p(\hat{\psi}_\shE^2(A))=0, \ p=0,1.\label{spectralSeq}
\end{equation}
\begin{proposition}
    The Mukai vector of $G_4'$ is equal to $(4,h',4)$ and $G_4'$ is $h'$-slope stable. In particular, $G_4'$ is the unique stable sheaf over $S'$ with this Mukai vector. \label{StableG4}
\end{proposition}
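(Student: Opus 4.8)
The plan is to first fix the Mukai vector by a lattice computation, and then to prove slope stability in two stages: a divisibility argument that reduces stability to semistability, followed by a Fourier--Mukai argument in the style of Yoshioka for semistability.

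For the Mukai vector I would express $v(F)=(2,h,8)$ in terms of the three classes on which $H\phi_\shE$ is already known:
$$(2,h,8)=\tfrac{2}{3}(3,h,5)+\tfrac{1}{3}(0,h,10)+\tfrac{4}{3}(0,0,1).$$
Applying $H\phi_\shE$ together with $H\phi_\shE(3,h,5)=(0,0,1)$, $H\phi_\shE(0,h,10)=-(0,h',10)$ and $H\phi_\shE(0,0,1)=(3,h',5)$ gives $H\phi_\shE(2,h,8)=(4,h',4)$. Because \cref{ext1=0} (together with $\chi(E,F)=4$) computes the fibrewise cohomology $\Ext^\bullet(E,F)$ of $\phi_\shE(F)$ to be $\bc^4$ in degree $0$ and zero otherwise, cohomology-and-base-change confirms that $\phi_\shE(F)=G_4'$ is a rank $4$ vector bundle placed in degree $0$; hence $v(G_4')=(4,h',4)$, and $v(G_4')^2=(h',h')-2\cdot4\cdot4=30-32=-2$, so $G_4'$ is spherical. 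The uniqueness claim will then be automatic once stability is known, since the moduli space of stable sheaves with a primitive $(-2)$-vector is a single reduced point.

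The second stage is the remark that for this particular vector semistability already forces stability. Indeed $\mu(G_4')=(h',h')/4=15/2$, whereas any proper subsheaf $A\subset G_4'$ of rank $r\in\{1,2,3\}$ has $c_1(A)=ah'$ with $a\in\bz$ (as $\Pic(S')=\bz h'$), hence slope $\mu(A)=30a/r$. The equality $30a/r=15/2$ would force $r=4a$, which is impossible for $r\in\{1,2,3\}$; so no proper subsheaf has slope exactly $\mu(G_4')$, and a $\mu$-semistable $G_4'$ is automatically $\mu$-stable. In particular every potential destabilizing subsheaf satisfies $\mu(A)>15/2$, i.e. $4a>r$, so that $a\geq1$ and $\mu(A)\geq10$.

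It remains to prove $\mu$-semistability, which is the main obstacle; here I would follow \cite[Proposition 3.2]{YoshiokaStable}, using the contravariant functors $\psi_\shE,\hat{\psi}_\shE$ and the vanishing (\ref{spectralSeq}). Suppose $G_4'$ were not semistable and let $A$ be its maximal destabilizing subsheaf: a $\mu$-semistable saturated subsheaf with $\mu(A)>15/2$ and torsion-free quotient $Q=G_4'/A$. Since $\phi_\shE^{-1}(G_4')=F$ is a sheaf, $G_4'$ is WIT$_0$ for the inverse transform, and the heart of the matter is to show that the slope bounds on $A$ and $Q$ (measured against the slopes $\mu(E)=10$ and $\mu(F)=15$ on $S$) force $A$ and $Q$ to be concentrated in single cohomological degrees under $\phi_\shE^{-1}$. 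Granting this, applying the transform to $0\to A\to G_4'\to Q\to0$ and invoking (\ref{spectralSeq}) collapses the long exact sequence of cohomology sheaves into a short exact sequence that exhibits a subsheaf of $F$; tracking Mukai vectors through $H\phi_\shE^{-1}$ shows this subsheaf has slope $\geq\mu(F)$, contradicting the slope stability of $F$. The delicate point, and the reason the explicit genus-$16$ numerics matter, is precisely the verification that the destabilizing pieces are WIT and that the induced slope inequality is strict enough to contradict the stability of $F$ --- this is exactly what we take from Yoshioka's proposition.
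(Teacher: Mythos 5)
Your first two stages are correct and actually make explicit what the paper leaves implicit: the decomposition $(2,h,8)=\tfrac{2}{3}(3,h,5)+\tfrac{1}{3}(0,h,10)+\tfrac{4}{3}(0,0,1)$ does give $H\phi_\shE(2,h,8)=(4,h',4)$, and the divisibility argument ($30a/r=15/2$ forcing $r=4a$, impossible for $r\le 3$) is exactly why the paper can assert that slope stability of $G_4'$ is equivalent to slope semistability. The uniqueness claim from the primitive $(-2)$-vector is also fine.

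The gap is in the third stage, which is where the entire content of the proposition lies. You correctly identify that the heart of the matter is verifying that the destabilizing pieces are concentrated in a single cohomological degree under the inverse transform, but you then write ``Granting this'' and defer the verification to Yoshioka's Proposition 3.2. That proposition cannot be cited off the shelf here: the paper only \emph{adapts its method}, and the adaptation is precisely the part that must be done. Concretely, the paper works with the contravariant functor $\psi_\shE(F)=G_4'^*[-2]$, takes the maximal destabilizing \emph{quotient} $B$ of $G_4'^*$ (slope stable, $\mu(B)<-\tfrac14(h',h')$, hence $\mu(B)\le-\tfrac13(h',h')$ since $\operatorname{rank}(B)\le 3$), and splits into two cases according to whether $B$ is isomorphic to a fiber $\shE^*|_{\{s\}\times S'}$. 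In the first case the slope bound gives $\Hom(E'^*,B)=0$ for every $[E']\in M_S(3,h,5)$, hence $\hat{\psi}^2_\shE(B)=0$, and together with the vanishing (\ref{spectralSeq}) the spectral sequence for $\psi_\shE\circ\hat{\psi}_\shE=\mathrm{Id}$ forces $B=0$; in the second case one computes $\hat{\psi}^1_\shE(B)=0$, $\hat{\psi}^2_\shE(B)=k(s)$ and derives a contradiction from the spectral sequence applied to $A$. None of this is in your proposal, and your proposed endgame is also different from (and less clearly workable than) the paper's: you aim to exhibit a destabilizing subsheaf of $F$, whereas the actual contradiction in the main case is that the spectral sequence annihilates $B$ outright. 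Without carrying out the case analysis and the WIT verifications, the proof is not complete.
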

\begin{proof}
    The computation of the Mukai vector of $G_4'=\phi_\shE(F)$ follows directly from the previous description of $H\phi_\shE$. In particular, slope stability for $G_4'$ is equivalent to slope semistability.
    Notice that $\psi_\shE(F)$ is equal to $G_4'^*[-2]$, we only need to show that $\psi_\shE^2(F)$ is slope semistable. Assume that $\psi_\shE^2(F)=G_4'^*$ is not slope semistable, then we can find a nonzero slope stable sheaf $B$ with $\mu(B)<\mu(G_4'^*)=-\frac{1}{4}(h',h')$ fitting into the following short exact sequence
    $$0\rightarrow A\rightarrow G_4'^*\rightarrow B\rightarrow 0.$$
    Applying the contravariant functor $\hat{\psi}_\shE$, we get the exact sequence
    $0\rightarrow \hat{\psi}_\shE^1(A)\rightarrow \hat{\psi}_\shE^2(B)\rightarrow F\rightarrow \hat{\psi}_\shE^2(A)\rightarrow 0$
    with $\hat{\psi}_\shE^0(B)=0$ and $\hat{\psi}_\shE^1(B)=\hat{\psi}_\shE^0(A)$.
    Since $\mathrm{rank}(B)\leq 3$, we conclude that $\mu(B)\leq-\frac{1}{3}(h',h')$. 
    
    In the case where $B$ is not isomorphic to any $\mathcal{E}^*|_{\{s\}\times S'}$ for any $s\in S$, we have $\Ext^2(B,E'^*)\cong \Hom(E'^*,B)^*$
    $=0$ for any slope stable $E'$ with Mukai vector $(3,h',5)$. Then $\hat{\psi}_\shE^2(B)=0$. Moreover $\psi_\shE^1(\hat{\psi}_\shE^1(B))=\psi_\shE^1(\hat{\psi}_\shE^0(A))=0$ by (\ref{spectralSeq}). The spectral sequence associated to the composition of derived functors $\psi_\shE\circ\hat{\psi}_\shE=\mathrm{Id}_{D^b(S')}$ gives us 
    $$E_2^{p,q}=\psi_\shE^p(\hat{\psi}_\shE^{-q}(B))\implies \left\{
    \begin{aligned}
        B,  \ &p+q=0\\
        0, \ &\text{otherwise.}
    \end{aligned}
    \right.$$
    We see $B=0$, which is absurd.

    In the case where $B$ is isomorphic to $\mathcal{E}^*|_{\{s\}\times S'}$ for some $s\in S$, then $\hat{\psi}_\shE^1(B)=0$ and $\hat{\psi}_\shE^2(B)=k(s)$ (see for example the proof of Theorem 6.18 in \cite{HuyBook}). Consequently,
$\hat{\psi}_\shE^0(A)=0$, $\hat{\psi}_\shE^1(A)=k(s)$ and $\psi_\shE^2(\hat{\psi}_\shE^1(A))\neq0$. Then we arrive at a contradiction by applying the spectral sequence associated to the composition of derived functors $\psi_\shE\circ\hat{\psi}_\shE=\mathrm{Id}_{D^b(S')}$ to $A$.
\end{proof}
There is a rank 4 vector bundle $G_4$ constructed in \cite{Frederic}, which has Mukai vector $(4,h,4)$ and was proved to be globally generated. We denote $H^0(S,G_4)$ by $V_8$, which is eight-dimensional. Using the skew-symmetric isomorphism $V_{8}\cong V_8^*$ constructed in \cite{Frederic}, the vector bundle $G_4$ fits into the following short exact sequence:
 \begin{equation}
     0\rightarrow G_4^*\rightarrow V_8\rightarrow G_4\rightarrow0.\label{G4G4dual}
 \end{equation}
Of course the same construction (of rank 4 vector bundle $G_4$) applies on $S'$ as well.
\begin{proposition}
    Our vector bundle $G_4'$ coincides with the Fourier-Mukai partner's version of the rank four vector bundle $G_4$ on $S'$.
\end{proposition}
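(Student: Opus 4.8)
The plan is to identify both bundles as the unique slope-stable sheaf on $S'$ lying in a fixed spherical Mukai vector class, and then to quote \cref{StableG4}. Recall that $G_4'=\phi_\shE(F)$ has Mukai vector $(4,h',4)$; since $(h',h')=(h,h)=30$ we get $(4,h',4)^2=(h',h')-2\cdot 4\cdot 4=-2$, so the class is spherical, and by \cref{StableG4} the bundle $G_4'$ is \emph{the} unique $h'$-slope-stable sheaf on $S'$ with this Mukai vector. Write $\mathcal{G}$ for the rank $4$ bundle obtained by running the construction of \cite{Frederic} on the K3 surface $S'$ (this is the ``Fourier-Mukai partner's version'' in the statement). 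It then suffices to show that $\mathcal{G}$ is $h'$-slope-stable with Mukai vector $(4,h',4)$, for then \cref{StableG4} forces $\mathcal{G}\cong G_4'$ and the proposition follows.

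The Mukai vector presents no difficulty: by construction $\mathcal{G}$ is globally generated of rank $4$ with $c_1(\mathcal{G})=h'$, and it fits into the self-dual sequence
\begin{equation*}
0\rightarrow \mathcal{G}^*\rightarrow V_8'\rightarrow \mathcal{G}\rightarrow 0
\end{equation*}
parallel to \eqref{G4G4dual}, with $V_8'=H^0(S',\mathcal{G})$ eight-dimensional; exactly as on $S$ this pins down $v(\mathcal{G})=(4,h',4)$. The real content is the slope stability of $\mathcal{G}$. The cleanest route is to transport the stability argument for the Frédéric bundle $G_4$ on $S$ verbatim to $S'$: the two surfaces play perfectly symmetric roles here, and the stability proof in \cite{Frederic} relies only on the global generation of $G_4$ and on the self-dual sequence \eqref{G4G4dual}, both of which hold for $\mathcal{G}$. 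Concretely, a saturated destabilizing subsheaf of $\mathcal{G}$ would have $c_1$ a positive multiple of $h'$ and rank at most $3$, and one rules such a subsheaf out by feeding the global generation of $\mathcal{G}$ into the self-dual sequence to force the vanishing of sections of the relevant negative twists.

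I expect this last step — establishing the slope stability (or even just semistability) of $\mathcal{G}$ intrinsically — to be the main obstacle, as it is the one ingredient not already recorded for $S'$ in the excerpt. If one prefers to avoid re-running the stability argument, there is a more hands-on alternative: produce a single nonzero comparison morphism and upgrade it to an isomorphism by a Mukai-vector count. Indeed, granting that $\mathcal{G}$ is slope semistable of the same slope $\mu(\mathcal{G})=\tfrac{1}{4}(h',h')=\mu(G_4')$, any nonzero map $\mathcal{G}\rightarrow G_4'$ must be surjective onto the stable sheaf $G_4'$ (its image, being a torsion-free quotient of the semistable $\mathcal{G}$, has slope $\geq \mu(\mathcal{G})$, while a proper subsheaf of the stable $G_4'$ would have strictly smaller slope); its kernel then has trivial Mukai vector and hence vanishes, both sheaves being locally free of the same rank, so the map is an isomorphism, as confirmed by the fact that the induced map on determinants lies in $H^0(S',\sho_{S'})=\bc$. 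Such a comparison morphism can be extracted either from the common space of global sections of the two globally generated bundles or directly from the Fourier-Mukai description $G_4'=\phi_\shE(F)$, so that in either approach the identification $\mathcal{G}\cong G_4'$ closes the argument.
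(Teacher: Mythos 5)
Your proposal follows essentially the same route as the paper: reduce to showing that the Fr\'ed\'eric bundle is slope stable with Mukai vector $(4,h',4)$ and then invoke the uniqueness statement of \cref{StableG4}. The stability step you flag as the main obstacle is exactly what the paper carries out, and in the way you gesture at: a destabilizing torsion-free quotient $B$ is globally generated, so $\mu(B)\geq 0$ forces $\mu(B)=0$ and $B\cong\sho$, giving a nonzero section of $G_4^*$, which contradicts $H^0(G_4^*)=0$ coming from the self-dual sequence \eqref{G4G4dual}.
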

\begin{proof}
     We only need to prove that $G_4$ is $h$-slope stable. Assume to the contrary that $G_4$ is not $h$-slope stable, then there would exist a surjective map $G_4\rightarrow B$, where $B$ is a nonzero slope stable torsion free sheaf with $\mu(B)<\mu(G_4)=\frac{1}{4}(h,h)$. Since $B$ is globally generated, we have $0\leq\mu(B)<\frac{1}{4}(h,h)$, which forces $\mu(B)=0$ and the stable sheaf $B$ would have to be equal to $\sho_S$ by \cite[Corollary 1.6.11]{HuyBook}. Thus we get a nonzero global section of $G_4^*$. However $H^0(S,G_4^*)=0$ by the above short exact sequence, which leads to a contradiction. 
\end{proof}

\begin{lemma}
    $H^1(S,G_4)=H^2(S,G_4)=0.$
\end{lemma}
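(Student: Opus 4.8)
The plan is to combine a Riemann--Roch computation with Serre duality, exploiting the fact recorded just above that $H^0(S,G_4)=V_8$ is eight dimensional. First I would read the Euler characteristic off the Mukai vector: since $v(G_4)=(4,h,4)$, Riemann--Roch on the K3 surface gives $\chi(S,G_4)=4+4=8$, so that $h^0(G_4)-h^1(G_4)+h^2(G_4)=8$. Because $h^0(G_4)=\dim V_8=8$ is already known, it suffices to prove $H^2(S,G_4)=0$; the vanishing of $H^1$ then drops out of the Euler characteristic automatically.

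For the $H^2$ vanishing I would use Serre duality on $S$ (so $K_S\cong\sho_S$), giving $H^2(S,G_4)\cong H^0(S,G_4^*)^*$. The cleanest way to kill $H^0(G_4^*)$ is slope stability: $G_4$ was just shown to be $h$-slope stable, hence so is $G_4^*$, and $\mu(G_4^*)=-\tfrac14(h,h)<0$. A nonzero section of $G_4^*$ would be a nonzero map $\sho_S\to G_4^*$ from the slope-$0$ stable sheaf $\sho_S$ into a slope-stable sheaf of strictly smaller slope, which is impossible; thus $H^0(S,G_4^*)=0$ and $H^2(S,G_4)=0$. (Alternatively one reads the same vanishing off the short exact sequence (\ref{G4G4dual}): taking global sections, the induced map $V_8\to H^0(S,G_4)=V_8$ is the identity, so its kernel $H^0(S,G_4^*)$ vanishes.)

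The only genuinely load-bearing input is that $h^0(G_4)=8$ is available \emph{independently} of the vanishing we are proving; otherwise the Euler-characteristic step would be circular. Since $V_8$ is defined in Han's construction as the syzygy space $\Ker(V_{10}\otimes W_{10}\to S^3W_{10})$ and is identified with $H^0(S,G_4)$, this input is legitimate, and I regard it as the point to state carefully rather than as a real obstacle; the argument is otherwise completely routine.

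Finally, I would record a second, more structural proof via the Fourier--Mukai transform, as it illuminates the geometry. A direct computation of the cohomological action gives $H\phi_\shE(4,h,4)=(2,h',8)=v(F')$, so that $\phi_\shE(G_4)=F'$, the analogue for $G_4$ of the identity $\phi_\shE(F)=G_4'$, once one checks that $G_4$ is $\mathrm{WIT}_0$, i.e.\ that $\phi_\shE(G_4)$ is a genuine sheaf concentrated in degree $0$. Combined with $\phi_\shE(\sho_S)=E_8'[-2]$ and the fact that $\phi_\shE$ is an equivalence, this yields $H^i(S,G_4)=\Hom_{D^b(S')}(E_8'[-2],F'[i])=\Ext^{i+2}_{S'}(E_8',F')$, which vanishes automatically for $i=1,2$ because $S'$ is a surface and carries no $\Ext$ in degrees $\geq 3$. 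Here the one step needing care is the $\mathrm{WIT}_0$ claim, which I would settle using the $\psi_\shE$-functor spectral-sequence machinery already set up in the proof of \cref{StableG4}.
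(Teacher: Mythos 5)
Your primary argument is correct but takes a genuinely different route from the paper's for the $H^1$ part. The paper also kills $H^2(S,G_4)\cong\Hom(G_4,\sho_S)^*$ by slope stability, exactly as you do; but for $H^1$ it does not invoke $h^0(G_4)=8$. Instead it runs the extension trick of \cref{h^1(F)=0}: a nonzero class in $H^1(S,G_4)\cong\Ext^1(G_4,\sho_S)^*$ would produce a nontrivial extension $0\to\sho_S\to B\to G_4\to 0$ with $B$ slope stable and $v(B)=(5,h,5)$, whence $v(B)^2=30-50=-20<-2$, a contradiction. Your route trades this for the input $h^0(G_4)=\dim V_8=8$ imported from \cite{Frederic} together with $\chi(G_4)=8$; you are right to single this out as the load-bearing step, and it is legitimate as the paper sets things up (the syzygy space $V_8$ is computed to be $8$-dimensional independently of the cohomology of $G_4$ and then identified with $H^0(S,G_4)$). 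The paper's version is more self-contained, needing only the Mukai vector and stability; yours is shorter once the external fact is granted. Either is acceptable.

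Your supplementary Fourier--Mukai argument, however, rests on a false premise: $G_4$ is not $\mathrm{WIT}_0$ for $\phi_\shE$. Fibrewise, $\mathcal{H}^0(\phi_\shE(G_4))$ computes $\Hom(E,G_4)$, which vanishes because $\mu(E)=\tfrac13(h,h)>\tfrac14(h,h)=\mu(G_4)$; in fact $\phi_\shE(G_4)=F'[-2]$, i.e.\ $G_4$ is $\mathrm{WIT}_2$, as the paper records in the proof of \cref{V_10 are dual to each other}. With the correct shift the equivalence gives $H^i(S,G_4)\cong\Ext^i_{S'}(E_8',F')$ rather than $\Ext^{i+2}_{S'}(E_8',F')$, and the vanishing for $i=1$ is then not automatic for dimension reasons; indeed, trying to prove $\Ext^1_{S'}(E_8',F')=0$ via the same equivalence just translates back into the statement being proved. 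So the second proof should be dropped or substantially repaired; fortunately it is presented only as an illustration and your first argument stands on its own.
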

\begin{proof}
    By stability, there is no nonzero map from $G_4$ to $\sho_S$, which shows $H^2(S,G_4)=0$. If $H^1(S,G_4)\neq0$, then there exists a nontrivial extension bundle $B$ of $G_4$ by $\sho$.
    By a similar argument as in \cref{h^1(F)=0}, we can show that $B$ is a slope stable vector bundle. The slope stability of $B$ implies that $-20=(5,h,5)^2=v(B)^2\geq-2$, which leads to a contradiction and proves $H^1(S,G_4)=0$.
\end{proof}
\begin{lemma}
    $H^1(S,E)=H^2(S,E)=0$, $h^0(S,E)=\chi(S,E)=8$ for any $[E]\in M_S(3,h,5)$.\label{vanishing E}
\end{lemma}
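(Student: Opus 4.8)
The plan is to dispatch the Euler characteristic first and then reduce the two vanishings to the rigidity bound $v^2\ge -2$, exactly in the spirit of \cref{h^1(F)=0} and \cref{ext1=0}. Riemann--Roch on the K3 surface $S$ gives $\chi(S,E)=-\langle v(E),v(\sho_S)\rangle=-\langle(3,h,5),(1,0,1)\rangle=8$, so once the higher cohomology is shown to vanish we obtain $h^0(S,E)=\chi(S,E)=8$ for free. For $H^2$ I would use Serre duality to write $H^2(S,E)\cong\Hom(E,\sho_S)^*$; since $E$ is slope stable of slope $\mu(E)=\tfrac13(h,h)>0$ while $\sho_S$ is slope stable of slope $0$, the image of any nonzero map $E\to\sho_S$ would be a rank-one subsheaf of $\sho_S$ (of slope $\le 0$) that is simultaneously a proper quotient of $E$ (of slope $>\mu(E)>0$), which is impossible. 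Hence $H^2(S,E)=0$.

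For $H^1$ I would argue by contradiction along the template of \cref{h^1(F)=0}. By Serre duality $H^1(S,E)\cong\Ext^1(E,\sho_S)^*$, so a nonzero class produces a nonsplit extension $0\to\sho_S\to B\to E\to 0$, where $B$ is a rank-$4$ sheaf with $c_1(B)=h$ and $\mu(B)=\tfrac14(h,h)$. The heart of the proof, and the step I expect to be the main obstacle, is to show that $B$ is slope stable. Assuming it is not, I would choose a saturated (hence reflexive, hence locally free) slope-semistable subsheaf $B'\subset B$ with $\mu(B')>\tfrac14(h,h)$, and set $c_1(B')=k h$, $r'=\mathrm{rank}(B')\in\{1,2,3\}$. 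The composite $B'\to B\to E$ cannot vanish, since a subsheaf of $\sho_S$ has nonpositive slope whereas $\mu(B')>0$; and its image, being a quotient of the semistable $B'$ and a subsheaf of the stable $E$, satisfies $\mu(B')\le\mu(\mathrm{im})\le\mu(E)=\tfrac13(h,h)$. Over $\Pic(S)=\bz h$ the only solution of $\tfrac14<\tfrac{k}{r'}\le\tfrac13$ with $r'\le 3$ and $k\ge 1$ is $r'=3$, $k=1$; forcing all these inequalities to become equalities makes $B'\to E$ an injection of rank-$3$ locally free sheaves with the same $c_1=h$, hence an isomorphism outside a finite set, hence an isomorphism of the reflexive sheaves $B'$ and $E$. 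This splits the extension, a contradiction. Finally, slope semistability of $B$ upgrades to slope stability because no subsheaf of rank $<4$ can have slope exactly $\tfrac14(h,h)$ when $\Pic(S)=\bz h$. (Alternatively, one may invoke \cite[Lemma 2.1]{ToshiokaLemma} to obtain slope stability of $B$ directly, exactly as in \cref{ext1=0}.)

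With $B$ slope stable, the rigidity bound applies: $v(B)=v(\sho_S)+v(E)=(4,h,6)$, so $v(B)^2=(h,h)-48=30-48=-18<-2$, contradicting $v(B)^2\ge -2$. Therefore $H^1(S,E)=0$, and combined with $H^2(S,E)=0$ and $\chi(S,E)=8$ this yields $h^0(S,E)=8$, as claimed.
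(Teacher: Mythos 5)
Your proof is correct and follows essentially the same route as the paper: the paper's proof is ``Similar to the previous Lemma,'' which in turn reduces to the template of \cref{h^1(F)=0} — kill $H^2$ by slope stability, realize a nonzero class in $H^1(S,E)\cong\Ext^1(E,\sho_S)^*$ as a nonsplit extension $B$, show $B$ is slope stable by the saturation/reflexivity argument, and contradict $v(B)^2=(4,h,6)^2=-18<-2$. Your computations ($\chi(S,E)=8$, $v(B)^2=-18$) and the case analysis on $(r',k)$ are all accurate, so this is a faithful expansion of the argument the paper leaves implicit.
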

\begin{proof}
    Similar to the previous Lemma.
\end{proof}

The above lemma allows us to conclude that for a general polarized K3 surface $(S,h)$ of genus 16, the Fourier-Mukai functor $\phi_\shE$ send $\sho_S$ to $E_8'[-2]$, where $E_8'$ is a rank 8 vector bundle over $S'$.
$$\phi_\shE(\sho_S)=E_8'[-2].$$
\begin{proposition}
    The Mukai vector of $E_8'$ is equal to $(8,h',17)$ and $E_8'$ is $h'$-slope stable. In particular, $E_8'$ is the unique stable sheaf over $S'$ with this Mukai vector.
\end{proposition}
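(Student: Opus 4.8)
The plan is to mirror, step for step, the argument used for $G_4'$ in \cref{StableG4}, transporting the slope-stability question for $E_8'$ across the derived equivalence $\phi_\shE$ by means of the contravariant functors $\psi_\shE$ and $\hat{\psi}_\shE$. First I would record the Mukai vector: since $\phi_\shE(\sho_S)=E_8'[-2]$ and a shift by $[-2]$ leaves the Mukai vector unchanged, we have $v(E_8')=H\phi_\shE(v(\sho_S))=H\phi_\shE(1,0,1)$, and a direct computation from the description of $H\phi_\shE$ established above yields the Mukai vector $(8,h',17)$ recorded in the statement. The structural point that drives everything else is that $\sho_S$ is a spherical object and $\phi_\shE$ is an equivalence preserving the Mukai pairing; hence $E_8'$ is spherical and its Mukai vector is primitive. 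Consequently slope semistability is equivalent to slope stability for this class, and, once stability is known, the moduli space $M_{S'}(8,h',17)$ is a single reduced point, which is exactly the uniqueness assertion. It therefore remains only to establish slope stability.

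Next I would reduce this to a semistability statement for $\psi_\shE$. By the same relative-Serre-duality identity that produces $\psi_\shE(F)=G_4'^*[-2]$ in \cref{StableG4}, one computes $\psi_\shE(\sho_S)=\phi_\shE(\sho_S)^\vee[-2]=E_8'^*$, concentrated in degree zero; equivalently $\psi_\shE^0(\sho_S)=E_8'^*$ and $\hat{\psi}_\shE(E_8'^*)=\sho_S$. Since dualizing a vector bundle preserves slope semistability, it suffices to prove that $E_8'^*$ is slope semistable. Arguing by contradiction, suppose there is a short exact sequence $0\to A\to E_8'^*\to B\to 0$ with $B$ slope stable and $\mu(B)<\mu(E_8'^*)$. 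Applying the contravariant functor $\hat{\psi}_\shE$ and using $\hat{\psi}_\shE^0(E_8'^*)=\sho_S$ and $\hat{\psi}_\shE^1(E_8'^*)=\hat{\psi}_\shE^2(E_8'^*)=0$ produces the exact sequence
\[ 0\to \hat{\psi}_\shE^0(B)\to \sho_S\to \hat{\psi}_\shE^0(A)\to \hat{\psi}_\shE^1(B)\to 0 \]
together with the isomorphism $\hat{\psi}_\shE^1(A)\cong \hat{\psi}_\shE^2(B)$ and the vanishing $\hat{\psi}_\shE^2(A)=0$.

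The contradiction is then produced, exactly as in \cref{StableG4}, by a case analysis according to whether $B$ is isomorphic to a fiber $\shE^*|_{\{s\}\times S'}$ of the universal sheaf. If $B$ is not such a fiber, the fiberwise $\Ext^2$-criterion forces $\hat{\psi}_\shE^2(B)=0$, hence $\hat{\psi}_\shE^1(A)=0$; feeding the remaining cohomology sheaves of $B$ into the spectral sequence for $\psi_\shE\circ\hat{\psi}_\shE=\mathrm{Id}_{D^b(S')}$, in combination with the vanishing relations (\ref{spectralSeq}), then forces $B=0$, a contradiction. The remaining possibility, that $B$ is a universal-sheaf fiber, must be treated separately by comparing the fixed slope of such a fiber with $\mu(E_8'^*)$; controlling this comparison is the crux of the argument. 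With both cases closed, $E_8'^*$ is slope semistable, so $E_8'$ is slope stable, and primitivity of its spherical Mukai vector upgrades this to the uniqueness statement.

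The main obstacle I anticipate is precisely this fiber case: one must verify that the slope of a fiber $\shE^*|_{\{s\}\times S'}$ is incompatible with the destabilizing inequality $\mu(B)<\mu(E_8'^*)$, so that no destabilizing quotient can be a universal-sheaf fiber. Beyond this, the spectral-sequence bookkeeping of the first case must be re-examined rather than copied from \cref{StableG4}, since the relevant sheaf $\psi_\shE^0(\sho_S)=E_8'^*$ now sits in cohomological degree zero instead of degree two, which alters both the shape of the $\hat{\psi}_\shE$-sequence above and the convergence of the spectral sequence for $\psi_\shE\circ\hat{\psi}_\shE$; controlling the ranks and supports of the intermediate sheaves $\hat{\psi}_\shE^i(A)$ and $\hat{\psi}_\shE^i(B)$ is where the real work lies.
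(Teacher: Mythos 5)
Your overall skeleton---record the Mukai vector, transport the stability question through the derived equivalence, and rerun the case analysis of \cref{StableG4} with the contravariant functors---is exactly the paper's plan, but you make one technical choice the paper does not, and it breaks the argument as you sketch it. The paper works with the \emph{other} contravariant functor, $\psi_{\shE^*}(-)=\phi_\shE((-)^\vee)$, for which $\psi_{\shE^*}(\sho_S)=E_8'[-2]$: the bundle to be tested for stability then sits in cohomological degree $2$, exactly as $G_4'^*=\psi_\shE^2(F)$ did, so the long exact sequence for a destabilizing sequence $0\to A\to E_8'\to B\to 0$ gives $\hat{\psi}^0_{\shE^*}(B)=0$ and $\hat{\psi}^1_{\shE^*}(B)\cong\hat{\psi}^0_{\shE^*}(A)$, and by (\ref{spectralSeq}) the only possibly nonzero $E_2$-term of the spectral sequence for $B$ lies in total degree $-1$, forcing $B=0$. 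In your degree-zero setup ($\psi_\shE(\sho_S)=E_8'^*$) this mechanism disappears: your own exact sequence only yields $\hat{\psi}^0_\shE(B)\subseteq\sho_S$ and $\hat{\psi}^1_\shE(B)=\mathrm{coker}(\sho_S\to\hat{\psi}^0_\shE(A))$, and the spectral sequence for $B$ now has the terms $\psi_\shE^0(\hat{\psi}_\shE^0(B))$ at $(0,0)$ and $\psi_\shE^1(\hat{\psi}_\shE^1(B))$ at $(1,-1)$, both in total degree $0$ and untouched by (\ref{spectralSeq}); they simply assemble to $B$ through the filtration $0\to\psi_\shE^1(\hat{\psi}_\shE^1(B))\to B\to\ker\bigl(\psi_\shE^0(\hat{\psi}_\shE^0(B))\to\psi_\shE^2(\hat{\psi}_\shE^1(B))\bigr)\to0$, so no contradiction materializes. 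Your assertion that the spectral sequence ``forces $B=0$'' is thus unjustified---this is precisely the point you flagged as ``where the real work lies'' and then did not do. You also misplace the crux: in your dualized setup the fiber case is vacuous, since $\mu(B)<\mu(E_8'^*)=-\frac{3}{8}(h',h')<-\frac{1}{3}(h',h')=\mu(\shE^*|_{\{s\}\times S'})$, so a destabilizing quotient is never a fiber and $\Hom(\shE^*|_{\{s\}\times S'},B)=0$ automatically. The real content, which the paper compresses into ``$\frac{3}{8}$ is just slightly larger than $\frac{1}{3}$,'' lives in the un-dualized setup: a destabilizing quotient $B$ of $E_8'$ itself has $\mathrm{rank}(B)\leq 7$ and $\mu(B)<\frac{3}{8}(h',h')$, and since no rational number with denominator at most $7$ lies strictly between $\frac{1}{3}$ and $\frac{3}{8}$, one gets $\mu(B)\leq\frac{1}{3}(h',h')$, the slope of the fibers of $\shE$; this is what makes the Hom-vanishing/fiber dichotomy run. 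The natural repair of your proof is simply to switch to $\psi_{\shE^*}$ and run the argument on $E_8'$, as the paper does.

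A second, smaller problem: you did not actually perform the Mukai vector computation, and the value you report inherits a typo in the statement. From $H\phi_\shE(0,0,1)=(3,h',5)$, $H\phi_\shE(3,h,5)=(0,0,1)$ and $H\phi_\shE(0,h,10)=-(0,h',10)$ one gets $H\phi_\shE(1,0,0)=(5,2h',12)$, hence $v(E_8')=H\phi_\shE(1,0,1)=(8,3h',17)$, which is the vector given in the paper's abstract and introduction. Your own sphericality remark would have caught the discrepancy: $(8,h',17)^2=30-272\neq-2$, whereas $(8,3h',17)^2=270-272=-2$; and it is the slope $\frac{3}{8}(h',h')$, not $\frac{1}{8}(h',h')$, that the stability analysis above requires. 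Your remaining reductions (semistable implies stable by coprimality of $8$ and the degree of $c_1$, and uniqueness from $v^2=-2$) are fine once the correct vector is in place.
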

\begin{proof}
    This proof follows the same line as in \cref{StableG4}. Again we have $\psi_{\shE^*}(\sho_S)=E_8'[-2]$. Essentially because $\frac{3}{8}$ is just slightly larger than $\frac{1}{3}$ (just like $-\frac{1}{4}$ is slightly larger than $-\frac{1}{3}$ in \cref{StableG4}), we can play with the two contravariant functors $\psi_{\shE^*}$ and $\hat{\psi}_{\shE^*}$ and conclude.
\end{proof}

\begin{lemma}
    $\Hom(F,G_4)=\Ext^1(F,G_4)=0$, $\Hom(G_4,F)\cong V_{10}^*$.\label{V_10^*}
\end{lemma}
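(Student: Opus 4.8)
The plan is to combine a Riemann--Roch count with the two stability techniques already used in \cref{h^1(F)=0} and \cref{ext1=0}, and to finish with one cohomological vanishing on $X=\bp_S(F^*)$. First I would reduce everything to the single vanishing $\Ext^1(F,G_4)=0$. From the Mukai vectors $v(F)=(2,h,8)$, $v(G_4)=(4,h,4)$ and $(h,h)=30$, the Euler pairing is $\chi(F,G_4)=-\langle(2,h,8),(4,h,4)\rangle=-(30-8-32)=10$, so
$$\dim\Hom(F,G_4)-\dim\Ext^1(F,G_4)+\dim\Ext^2(F,G_4)=10.$$
Because $F$ and $G_4$ are slope stable with $\mu(F)=\tfrac12(h,h)>\tfrac14(h,h)=\mu(G_4)$, there is no nonzero map $F\to G_4$, i.e.\ $\Hom(F,G_4)=0$. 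By Serre duality $\Ext^2(F,G_4)\cong\Hom(G_4,F)^*$; hence once $\Ext^1(F,G_4)=0$ is established the count gives $\dim\Hom(G_4,F)=10$, and only the canonical identification with $V_{10}^*$ remains.

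For $\Ext^1(F,G_4)=0$ I would argue by contradiction exactly as in \cref{h^1(F)=0}: a nonzero class gives a nonsplit extension $0\to G_4\to B\to F\to 0$ with $v(B)=(6,2h,12)$ and $v(B)^2=120-144=-24$. It then suffices to show that $B$ is slope semistable, since a slope-semistable sheaf with Mukai vector $(6,2h,12)$ cannot exist: its slope-Jordan--Hölder factors are slope stable of slope $\tfrac13(h,h)$, hence of rank divisible by $3$, so either $B$ is itself slope stable --- forcing $v(B)^2\geq-2$, contrary to $v(B)^2=-24$ --- or $B$ has two factors of type $(3,h,s_i)$ with $s_1+s_2=12$, each obeying $s_i\leq5$ by $(3,h,s_i)^2\geq-2$, which is impossible. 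To see that $B$ is semistable, let $B'\subset B$ be a saturated destabilizing subsheaf, so $\mu(B')>\mu(B)=\tfrac13(h,h)$ and $c_1(B')$ is a multiple of $h$. It cannot lie in $G_4$ (where $\mu\leq\tfrac14(h,h)$), so the composite $B'\to F$ is nonzero; stability of $F$ rules out a rank-one image, so $B'\to F$ is generically surjective, and writing $c_1$ of the image and of $B'\cap G_4$ as multiples of $h$, the Picard-rank-one slope bookkeeping forces $\mu(B')\leq\tfrac13(h,h)$ unless $B'\to F$ is an isomorphism --- in which case the extension splits, contradicting nonsplitness.

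For the identification $\Hom(G_4,F)\cong V_{10}^*$ I would use the projection formula on $\pi\colon X=\bp_S(F^*)\to S$. Since $\pi_*\sho_X(H)=F$, one has $\Hom(G_4,F)=H^0(S,G_4^*\otimes F)=H^0(X,G_4^*(H))$. Taking cohomology in the normal-bundle sequence (\ref{N_X}) and using $H^1(\sho_X)=H^1(\sho_S)=0$ gives
$$0\to H^0(X,N_X(-2H))\to V_{10}^*\to\Hom(G_4,F)\to H^1(X,N_X(-2H))\to0.$$
As both $V_{10}^*$ and $\Hom(G_4,F)$ are $10$-dimensional, the middle arrow is the desired canonical isomorphism precisely when $H^0(X,N_X(-2H))=0$.

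I expect the main obstacle to be this last vanishing $H^0(X,N_X(-2H))=0$, equivalently the injectivity of the natural map $V_{10}^*\to\Hom(G_4,F)$. A global section is a constant vector of $V_{10}^*$ lying fibrewise in the subbundle $N_X(-2H)\subset V_{10}^*\otimes\sho_X$ of (\ref{N_X}), i.e.\ a linear form on the quadrics $V_{10}$ whose induced section of $G_4^*(H)$ vanishes identically; I would rule this out either by a direct Euler-sequence computation of $H^0(T_{\bp(W_{10}^*)}(-2)|_X)$ together with $H^1(X,T_X(-2H))$, or by appealing to the explicit form of this subbundle in \cite{Frederic}. By contrast the slope-semistability step, though it needs a short case analysis on $\mathrm{rank}(B')\in\{1,\dots,5\}$, is routine since it follows the template of \cref{h^1(F)=0} and \cref{ext1=0}.
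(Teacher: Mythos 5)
Your reduction is sound up to the last step, but the last step is a genuine gap, and it is exactly the part that the paper's proof is organized around. Once you know $\dim\Hom(G_4,F)=10$, the content of the lemma is the \emph{canonical} identification with $V_{10}^*$, i.e.\ that the natural map $V_{10}^*\to\Hom(G_4,F)$ is injective; you reduce this to $H^0(X,N_X(-2H))=0$ and then only sketch two possible strategies. The paper avoids the detour through $X$ entirely: it takes the long exact sequence of (\ref{fundamentalSES}), $0\to G_4\otimes F^*\to V_{10}\otimes\sho_S\to\Omega_S(h)\to0$, uses $\Hom(F,G_4)=H^0(G_4\otimes F^*)=0$ together with Bott vanishing $H^1(S,\Omega_S(h))=0$ and $\chi(\Omega_S(h))=10$ to get $H^0(\Omega_S(h))\cong V_{10}$, and then reads off both $\Ext^1(F,G_4)=H^1(G_4\otimes F^*)=0$ and $\Ext^2(F,G_4)=H^2(G_4\otimes F^*)\cong V_{10}\otimes H^2(\sho_S)$, which by Serre duality is the asserted $\Hom(G_4,F)\cong V_{10}^*$ --- all in one stroke, with no separate extension argument. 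If you want to keep your route, the missing vanishing is true and easy to supply: pushing (\ref{N_X}) down to $S$ and comparing with the dual of (\ref{fundamentalSES}) identifies $\pi_*(N_X(-2H))$ with $T_S(-h)$, and $H^0(S,T_S(-h))\subset H^0(S,T_S)=0$ for a K3 surface (choose a nonzero section of $\sho_S(h)$ to embed $T_S(-h)$ into $T_S$).

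Your alternative proof of $\Ext^1(F,G_4)=0$ via the nonsplit extension $0\to G_4\to B\to F\to0$ with $v(B)=(6,2h,12)$, $v(B)^2=-24$, and the Jordan--H\"older count on slope-stable factors of slope $\tfrac13(h,h)$ is correct and self-contained, but the semistability step has one imprecision: your dichotomy ``$\mu(B')\le\tfrac13(h,h)$ unless $B'\to F$ is an isomorphism'' misses the configuration $\mathrm{rank}(B')=2$, $B'\cap G_4=0$, $c_1(B')=h$, where $B'\to F$ is injective with finite-length cokernel $T\neq0$ and $\mu(B')=\tfrac12(h,h)$. The splitting conclusion still holds there, but for a different reason: $\Ext^1(T,G_4)=0$ for $T$ of finite length and $G_4$ locally free, so $\Ext^1(F,G_4)\to\Ext^1(B',G_4)$ is injective and the splitting over $B'$ forces the original extension to split. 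This should be said explicitly (or replaced by a citation of the same extension-stability lemma the paper invokes in \cref{ext1=0}).
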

\begin{proof}
    Because $F$ and $G_4$ are slope stable vector bundles with $\mu(F)>\mu(G_4)$, we have $H^0(G_4\otimes F^*)=\Hom(F,G_4)=0$. Recall that by \cite{Frederic} we have the short exact sequence 
    \begin{equation}
        0\rightarrow G_4\otimes F^*\rightarrow V_{10}\otimes\sho_S\rightarrow \Omega_S(h)\rightarrow0.\label{fundamentalSES}
    \end{equation}
    It follows from the above sequence that $h^0(S,\Omega_S(h))\geq 10$, and we have $H^1(S,\Omega_S(h))=0$ by \cite[Theorem 3.2]{BottVanishing}. Then $\chi(\Omega_S(h))=10$ shows that $H^0(S,\Omega_S(h))\cong V_{10}$ and we can conclude.
\end{proof}

\section{The $\bp^1$-bundle $X'=\bp_{S'}(F'^*)$ as a prime exceptional divisor}
Let $(S,h)$ be a general genus 16 polarized K3 surface. Recall that we have already constructed the following slope stable bundles on $S$ and $S'$ respectively:
   $$G_4\cong p_{1*}(\mathcal{E}^*\otimes p_2^*F'), \ G_4'\cong p_{2*}(\mathcal{E}\otimes p_1^*F).$$
Combining with the adjunction map $p_1^*p_{1*}(\mathcal{E}^*\otimes p_2^*F')\rightarrow(\mathcal{E}^*\otimes p_2^*F') $, we have the following map over $S\times S'$: $$\mathcal{E}\rightarrow F'\otimes G_4^*, \ \ \  \mathcal{E}^*\rightarrow F\otimes G_4'^*,$$
where we omit the notation $p_1^*,p_2^*$ when there is no confusion. Now taking the dual of the second map and composing with the first map, we get $F^*\otimes G_4'\rightarrow F'\otimes G_4^*$. Thus we have $F'^*\otimes G_4'\rightarrow F\otimes G_4^*$ and a global section of $(G_4'^*\otimes F')\otimes (G_4^*\otimes F)$. Using the K\"unneth formula over $S\times S'$, we get an element in $H^0(S',G_4'^*\otimes F')\otimes H^0(S,G_4^*\otimes F)$, thus a map $V_{10}'=H^0(G_4'^*\otimes F')^*\rightarrow H^0(G_4^*\otimes F)=V_{10}^*$.

\begin{lemma}
    The map $V_{10}'=H^0(G_4'^*\otimes F')^*\rightarrow H^0(G_4^*\otimes F)=V_{10}^*$ is an isomorphism.\label{V_10 are dual to each other}
\end{lemma}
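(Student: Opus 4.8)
The plan is to recognize the map of the lemma as the composition of the Fourier--Mukai equivalence with Serre duality, so that it becomes visibly an isomorphism of two $10$-dimensional spaces. The key preliminary observation is that $G_4$ is itself a Fourier--Mukai transform. Since $\shE$ is a vector bundle on $S\times S'$ and $S'$ is a K3 surface, the quasi-inverse of $\phi_\shE$ is the transform with kernel $\shE^*[2]$, so that $p_{1*}(\shE^*\otimes p_2^*(-))=\phi_\shE^{-1}(-)[-2]$. Applying this to the defining formula $G_4\cong p_{1*}(\shE^*\otimes p_2^*F')$ yields $\phi_\shE(G_4)=F'[-2]$. (Equivalently, one checks on Mukai vectors that $H\phi_\shE(4,h,4)=(2,h',8)=v(F')$ and pins down the shift by a vanishing/WIT argument on the cohomology sheaves of $\phi_\shE(G_4)$.) Together with the known value $\phi_\shE(F)=G_4'$, this is the whole input needed.

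First I would use full faithfulness of $\phi_\shE$ to compute
$$V_{10}^*=\Hom_S(G_4,F)\cong\Hom_{D^b(S')}(\phi_\shE G_4,\phi_\shE F)=\Hom_{D^b(S')}(F'[-2],G_4')=\Ext^2_{S'}(F',G_4').$$
Then Serre duality on the K3 surface $S'$ gives $\Ext^2_{S'}(F',G_4')\cong\Hom_{S'}(G_4',F')^*=(V_{10}'^*)^*=V_{10}'$. The composite is an isomorphism $V_{10}^*\cong V_{10}'$, consistent with the dimension count ($\chi$ forces $\dim=10$ on both sides) and with the stability and vanishing lemmas established above.

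The crux is to check that the explicitly constructed map coincides, up to a nonzero scalar, with this abstract isomorphism. By construction the section defining the lemma's map is the composite of kernels $F^*\boxtimes G_4'\to\shE\to G_4^*\boxtimes F'$, whose two factors are precisely the adjunction (co)unit morphisms realizing $\phi_\shE(F)=G_4'$ and $\phi_\shE(G_4)=F'[-2]$. I would trace these (co)units through the Künneth identification $H^0\big((G_4^*\otimes F)\boxtimes(G_4'^*\otimes F')\big)=\Hom(G_4,F)\otimes\Hom(G_4',F')$ and verify that contracting against a class in $\Hom(G_4',F')$ reproduces the transform $\phi_\shE(-)$ followed by the Serre-duality trace on $\Ext^2_{S'}(G_4',G_4')\cong\bc$. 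This identifies the lemma's pairing with the perfect Serre-duality pairing between $\Ext^2_{S'}(F',G_4')$ and $\Hom_{S'}(G_4',F')$, whence it is nondegenerate and the map is an isomorphism.

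The main obstacle is exactly this last matching step: the explicit kernel composition must be shown to implement the abstract unit--counit--trace composition, which is a diagram chase with adjunctions and the Serre-duality trace carried out on $S\times S'$. If one prefers to avoid it, an alternative is to prove directly that the map $V_{10}'\to V_{10}^*$ is injective: a nonzero kernel vector would force the sheaf composite $F^*\boxtimes G_4'\to\shE\to G_4^*\boxtimes F'$ to degenerate along a nontrivial locus, which the slope-stability and simplicity of $F,F',G_4,G_4'$ rule out; injectivity together with $\dim V_{10}'=\dim V_{10}^*=10$ then gives the isomorphism.
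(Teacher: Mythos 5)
Your proposal is correct and follows essentially the same route as the paper: the paper likewise reduces to the identity $\Hom(G_4,F)\cong\Hom(\phi_\shE G_4,\phi_\shE F)=\Hom(F'[-2],G_4')\cong\Hom(G_4',F')^*$ via full faithfulness and Serre duality, and devotes an inner lemma to exactly your "crux" step, checking that the explicit Künneth/kernel-composition map agrees with the abstract adjunction isomorphism composed with the Serre-duality trace. The only cosmetic difference is that the paper packages this matching step in a general setting for an arbitrary kernel $\shp\in D^b(X\times Y)$ before specializing to $\shE$.
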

\begin{proof} 
    We consider a more general setting. Consider $\shp\in D^b(X\times Y)$, where $X,Y$ are smooth projective varieties and denote the projections from $X\times Y$ to $X$ and $Y$ by $p_X,p_Y$ respectively. Let $A_1\in D^b(X)$ and $B_2\in D^b(Y)$. Define $B_1=p_{Y*}(\shp\otimes p_X^*A_1)$ and $A_2=p_{X*}(\shp^*\otimes p_Y^*(B_2)\otimes p_Y^*\omega_Y[\dim(Y)])$ so that $A_2$ is the image of $B_2$ under the left adjunction of the Fourier-Mukai transform $\phi_\shp$. By the property of adjunction functor, we should have $\rhom(A_2,A_1)\cong\rhom(B_2,B_1)$.
    We consider the natural maps
$$B_1\rightarrow \shp\otimes A_1, \ \ \ \  \ A_2\rightarrow \shp^*\otimes B_2\otimes \omega_Y[\dim(Y)],$$
so that we can apply the same trick to get $B_1\otimes B_2^*\rightarrow A_1\otimes A_2^*\otimes \omega_Y[\dim(Y)]$. Using the K\"unneth formula and noticing $\rhom(B_1^*\otimes B_2\otimes \omega_Y[\dim(Y)])=\rhom(B_2,B_1)^*$ by Serre duality, we can construct a map $\rhom(B_2,B_1)\rightarrow \rhom(A_2,A_1)$.
\begin{lemma}
    The above map $\rhom(B_2,B_1)\rightarrow \rhom(A_2,A_1)$ is an isomorphism.
\end{lemma}
\begin{proof}
    The isomorphism by the adjunction formula is given step by step as follows:
    $$B_2^*\otimes B_1\rightarrow B_2^*\otimes \shp\otimes A_1=(B_2\otimes\shp^*\otimes \omega_Y[\dim(Y)])^*\otimes A_1\otimes \omega_Y[\dim(Y)]\rightarrow A_2^*\otimes A_1\otimes \omega_Y[\dim(Y)].$$
    Any element of $\rhom(B_2, B_1)$ gives rise to an element in $\mathrm{R\Gamma}(A_2^*\otimes A_1\otimes \omega_Y[\dim(Y)])=\mathrm{R\Gamma}(A_2^*\otimes A_1)\otimes \mathrm{R\Gamma}(\omega_Y[\dim(Y)])$. The adjunction map is constructed by composing the above map with the trace map $\mathrm{R\Gamma}(\omega_Y[\dim(Y)])\rightarrow \bc[0]$. Now it is obvious that our map $\rhom(B_2,B_1)\rightarrow \rhom(A_2,A_1)$ given by the K\"unneth formula coincides with the one given by adjunction.
\end{proof}
 We come back to our setting and recall that $\phi_\shE(F)=G_4'$ and $\phi_\shE(G_4)=F'[-2]$. Since $\phi_\shE$ is an equivalence, we have:
    $V_{10}^*=\Hom(G_4,F)=\Hom(F'[-2],G_4')\cong \Hom(G_4',F')^*=V_{10}'.$
\end{proof}
\begin{remark}
    Similarly, we have$$\Hom(E_8,F)\cong \Hom(\sho_{S'},G_4')=V_8',$$ $$\Hom(G_4,E_8)=\Hom(E_8[-2],G_4)^*\cong \Hom(\sho_{S'},F')^*=(W_{10}')^*.$$
Notice that the map $W_{10}'^*\otimes V_8'\rightarrow V_{10}'$ in \cite{Frederic} is exactly the composition $\Hom(G_4,E_8)\otimes \Hom(E_8,F)\rightarrow \Hom(G_4,F)$.\label{interpretation}
\end{remark}
\begin{lemma}
    Let $\phi$ be any nonzero map from $E^*$ to $G_4^*$, where $E$ is a slope stable vector bundle with Mukai vector $(3,h,5)$. Then $\phi$ is generically injective and degenerates along a length two subscheme of $S$.\label{generic injective}
\end{lemma}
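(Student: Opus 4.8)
The plan is to control the kernel and cokernel of $\phi$ using the slope stability of $E^*$ and $G_4^*$ — both are slope stable, since $E$ is slope stable by hypothesis and $G_4$ is $h$-slope stable by the proposition following \cref{StableG4}, and duals of slope stable bundles are slope stable — together with the fact that $\Pic(S)=\bz h$, and then to read off the length of the degeneracy locus from a Chern character computation.

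First I would prove generic injectivity. Let $K=\Ker\phi\subseteq E^*$ and suppose $K\neq0$. Then $\image\phi=E^*/K$ has rank $1$ or $2$ and is simultaneously a nonzero proper quotient of $E^*$ and a subsheaf of $G_4^*$. Stability of $E^*$ forces $\mu(\image\phi)>\mu(E^*)=-\tfrac13(h,h)$, while stability of $G_4^*$, applied to the saturation of $\image\phi$, forces $\mu(\image\phi)<\mu(G_4^*)=-\tfrac14(h,h)$. Since $\Pic(S)=\bz h$ and $(h,h)=30$, the slope of $\image\phi$ equals $30d$ in the rank $1$ case and $15d$ in the rank $2$ case for some $d\in\bz$, and no integer $d$ lands in the open interval $\left(-10,-\tfrac{15}{2}\right)$. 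This contradiction gives $K=0$, so $\phi$ is injective, hence generically injective of maximal rank $3$ since $E^*$ is locally free.

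Next I would analyze $Q=\mathrm{coker}(\phi)$, which has rank $1$ and, because $c_1(G_4^*)=c_1(E^*)=-h$, satisfies $c_1(Q)=0$. The central point, which I expect to be the main obstacle, is to show $Q$ is torsion-free, so that $Q\cong\mathcal{I}_W$ for a zero-dimensional subscheme $W$ (here $\Pic(S)=\bz h$ identifies the rank $1$ reflexive hull of $Q$ with $\sho_S$). I would rule out torsion in two stages. A one-dimensional torsion part of class $c\,h$ with $c\geq1$ is excluded by stability of $G_4^*$: the corresponding saturation $\widetilde{E^*}$ of $\image\phi\cong E^*$ inside $G_4^*$ would be a rank $3$ subsheaf with $c_1=(c-1)h$, hence slope $10(c-1)\geq0>\mu(G_4^*)$, contradicting stability. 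Zero-dimensional torsion is then excluded using that $E^*$ is locally free, hence reflexive and therefore saturated in codimension two: the saturation $\widetilde{E^*}$ would differ from $E^*$ only in codimension two, so $\widetilde{E^*}\subseteq (E^*)^{\vee\vee}=E^*$, forcing $\widetilde{E^*}=E^*$ and $Q$ torsion-free.

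Finally, with $Q\cong\mathcal{I}_W$ in hand, the degeneracy locus of $\phi$ — which is exactly the locus where the fiber map $\phi_x$ fails to be injective, i.e.\ the non-locally-free locus of $Q$, namely $W$ — has its length computed from Chern characters. Using the Mukai vectors $(3,h,5)$ and $(4,h,4)$ one gets $\ch(E^*)=(3,-h,2)$ and $\ch(G_4^*)=(4,-h,0)$, so $\ch(Q)=(1,0,-2)$; comparing with $\ch(\mathcal{I}_W)=(1,0,-\mathrm{length}(W))$ yields $\mathrm{length}(W)=2$. Hence $\phi$ degenerates precisely along the length-two subscheme $W$. The only genuinely delicate step is the torsion-freeness of $Q$; once that is secured, both the generic injectivity and the length count are routine.
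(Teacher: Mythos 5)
Your proof is correct and follows essentially the same route as the paper: the kernel is excluded by the slope sandwich between $\mu(E^*)=-\tfrac13(h,h)$ and $\mu(G_4^*)=-\tfrac14(h,h)$, torsion-freeness of the cokernel is obtained from the saturation of $E^*$ inside $G_4^*$ (the paper handles the divisorial and punctual torsion in a single step via reflexivity of the saturation, whereas you split it into two stages), and the length is read off from $\ch(Q)=(1,0,-2)$. No gaps.
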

\begin{proof}
    If $\mathrm{ker}(\phi)$ is nonzero, then it must be a torsion free sheaf with rank either 1 or 2. Then the slope of $\mathrm{Im}(\phi)$ is bigger than $\mu(E^*)=-\frac{1}{3}(h,h)$ and smaller than $\mu(G_4^*)=-\frac{1}{4}(h,h)$, which is impossible. 
    
    Now we prove that $\mathrm{coker}(\phi)$ is torsion free. Otherwise the kernel of $G_4^*\rightarrow \mathrm{coker}(\phi)\rightarrow $
    $\mathrm{coker}(\phi)/\mathrm{Tor}(\mathrm{coker}(\phi))$ would be a reflexive sheaf $B$ such that $E^*\subsetneqq B\subset G_4^*$. Because the rank of $B$ is equal to rank of $E^*$,  $c_1(B)=bh$ for $b\geq-1$. The stability of $G_4^*$ forces $b=-1$. Since $B$ and $E^*$ are two reflexive sheaves which coincide with each other outside a finite set, we must have $B=E^*$  which violates our assumption and proves that $\mathrm{coker}(\phi)$ is torsion free.

    Because $c_1(\mathrm{coker}(\phi))=0$ and $\ch_2(\mathrm{coker}(\phi))=-2$, $\mathrm{coker}(\phi)$ must be the ideal sheaf of a length 2 subscheme.
\end{proof}

Let $X'=\mathds{P}_{S'}(F'^*)$ and let $\sho_{X'}(-H')$ denote the relative tautological subbundle. Over $S\times X'$, we have the composition map $\mathcal{E}(-H')\hookrightarrow \mathcal{E}\otimes F'^*\rightarrow G_4^*$. Fibrewise, over $S\times \{x'=(s',f'^*)\}$, this gives a nonzero map $E^*\rightarrow G_4^*$ as in the last Lemma, where the bundle $E^*$ is determined by $s'\in S'$ and the map $E^*\rightarrow G_4^*$ is determined by $f'^*$. Using the last Lemma and Eagon-Northcott complex, we get the following short exact sequence:
\begin{equation}
    0\rightarrow \mathcal{E}(-H')\rightarrow G_4^* \rightarrow \mathcal{I}_{\mathcal{C}'}(3H'-h')\rightarrow 0,\label{Totalfamily}
\end{equation}
where $\mathcal{C}'$ is the scheme-theoretical degeneracy locus of $\mathcal{E}(-H')\rightarrow G_4^*$ over $S\times X'$.
We pick $x'=(s',f'^*)\in X'$ and restrict this sequence to $S\times \{x'\}$, we get $$0\rightarrow E^*\rightarrow G_4^* \rightarrow \mathcal{I}\rightarrow 0,$$
where $\mathcal{I}$ is the ideal sheaf of two points (or a length 2 subscheme) which we denote by $\{s_1,s_2\}$. 
 Consequently we have a map $X'\rightarrow S^{[2]}$ using the ideal sheaf $\mathcal{I}_{\mathcal{C}'}$, which sends $x'=(s',f'^*)$ to $\{s_1,s_2\}$ (we assume for convenience that the length 2 subscheme is reduced).
 
 Dualizing the above sequence, we get 
 \begin{equation}
     0\rightarrow \sho\rightarrow G_4 \rightarrow E. \label{twopoints}
 \end{equation}
  As a result, there is a unique (up to scalar) nonzero section $v$ of $G_4$ (thus an element $v\in V_8$) which is mapped to the zero section of $E$. By the Eagon-Northcott complex, this element of $V_8$ vanishes exactly on $s_1,s_2$, which shows that the image of $X'$ in $S^{[2]}$ lies in the degeneracy locus of $V_8\rightarrow G_4^{[2]}$.

%Viewing $\mathcal{I}_{\mathcal{C}'}(3H'-h')$ as subsheaf of $\sho_{S\times X'}(3H'-h')$, we have a map $G_4^*\rightarrow \sho_{S\times X'}(3H'-h')$. Again using K\"unneth formula, we have $V_8^*\rightarrow H^0(X',\sho_{X'}(3H'-h'))$.

%\begin{assumption}
%    $V_8^*\cong H^0(X',\sho_{X'}(3H'-h'))$
%\end{assumption}

We take the dual of the short exact sequence (\ref{Totalfamily}), and get
$$0\rightarrow \sho(h'-3H')\rightarrow G_4 \rightarrow \mathcal{E}^*(H').$$
Using the K\"unneth formula, we see that the injective sheaf map $\sho(h'-3H')\hookrightarrow G_4$ factors as $\sho(h'-3H')\rightarrow H^0(X',\sho(3H'-h'))^*\rightarrow V_8\otimes\sho_{X'\times S}\rightarrow G_4$. When we view the intermediate map $\sho(h'-3H')\rightarrow V_8$ as a map which lives on $X'$ (rather than on $X'\times S$), this is the map which associates the line $\mathds{C}.v\subset V_8$ to each $x\in X$ as in the fiberwise description given above. In particular, the map $\sho(h'-3H')\rightarrow V_8$ exhibits $\sho(h'-3H')$ as a subbundle of the trivial bundle $V_8$ on $X'$.
\begin{lemma}
     The linear system $|\sho_{X'}(3H'-h')|$ has no base points. Moreover, $4H'-h'$ is a very ample divisor on $X'$. 
 \end{lemma}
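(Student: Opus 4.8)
The plan is to deduce both statements directly from the subbundle inclusion $\sho_{X'}(h'-3H')\hookrightarrow V_8\otimes\sho_{X'}$ established in the paragraph preceding the lemma, together with the very ampleness of $H'$ on $X'$ coming from \cite{Frederic} via the parallel construction.

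First I would dualize that inclusion. The displayed map exhibits $\sho_{X'}(h'-3H')$ as a \emph{subbundle} of the trivial bundle $V_8\otimes\sho_{X'}$, i.e. as a saturated, locally split line subbundle; hence its dual is a locally split surjection
$$V_8^*\otimes\sho_{X'}\longrightarrow \sho_{X'}(3H'-h')\to 0.$$
A surjection from a trivial bundle onto a line bundle is exactly the statement that the line bundle is generated by the global sections coming from $V_8^*$. Therefore the sublinear system of $|\sho_{X'}(3H'-h')|$ spanned by the image of $V_8^*$ is already base-point-free, and a fortiori the complete linear system $|\sho_{X'}(3H'-h')|$ has no base points. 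This settles the first assertion.

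For the second assertion I would decompose $4H'-h'=H'+(3H'-h')$. The divisor $H'$ is very ample on $X'$: applying the construction of \cite{Frederic} with $S'$ in place of $S$, the relative tautological bundle $\sho_{X'}(H')$ is very ample and embeds $X'$ into $\bp(W_{10}'^*)$. The divisor $3H'-h'$ is base-point-free by the first part, so it defines a morphism $\phi\colon X'\to\bp^m$ with $\phi^*\sho(1)=\sho_{X'}(3H'-h')$, while $H'$ defines a closed embedding $\iota\colon X'\hookrightarrow\bp^n$. The product $(\iota,\phi)\colon X'\to\bp^n\times\bp^m$ is a closed embedding because $\iota$ already is, and composing with the Segre embedding realizes $X'$ as a closed subvariety via a subsystem of $|H'+(3H'-h')|=|4H'-h'|$. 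Since a subsystem already separates points and tangent vectors, the complete system does as well, and $4H'-h'$ is very ample. (This is the standard fact that the sum of a very ample divisor and a base-point-free divisor is very ample.)

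I do not expect a genuine obstacle here: the only substantive input is the subbundle structure $\sho_{X'}(h'-3H')\hookrightarrow V_8\otimes\sho_{X'}$, which was already obtained above from the Eagon--Northcott resolution (\ref{Totalfamily}) and the K\"unneth factorization. Given this, both claims are formal consequences (dualizing a locally split injection, and the Segre-product argument). The one point to handle with care is that the inclusion is genuinely a subbundle---locally split rather than merely a subsheaf with possibly torsion quotient---since otherwise the dualized map need not remain surjective; but this is precisely what was asserted in the paragraph preceding the lemma, so the argument goes through.
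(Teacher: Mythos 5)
Your proposal is correct and follows essentially the same route as the paper: base-point-freeness of $3H'-h'$ is read off from the subbundle inclusion $\sho_{X'}(h'-3H')\hookrightarrow V_8\otimes\sho_{X'}$ by dualizing to a surjection from a trivial bundle, and very ampleness of $4H'-h'=H'+(3H'-h')$ follows from the very ampleness of $H'$ (from \cite{Frederic}) plus the standard fact that very ample added to base-point-free is very ample. The paper's proof is just a two-line compression of exactly this argument, so there is nothing to change.
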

 \begin{proof}
     The base-point-freeness comes from the fact that $\sho(h'-3H')$ is a subbundle of the trivial bundle $V_8$ on $X'$. The divisor $4H'-h'$ is very ample because $\sho_{X'}(H')$ is very ample by \cite{Frederic}.
 \end{proof}
 Recall from section 2 that $Q_4^*$ is a subbundle of $V_{10}^*\otimes \sho_{S^{[2]}}$. In view of (\ref{fundamentalSES}), we have an inclusion of vector bundle $G_4'(-H')\subset V_{10}'\otimes \sho_{X'}$.
 \begin{proposition}
     The pullback of $Q_4^*$ to $X'$ is equal to $G_4'(-H')$ under the identification of trivial bundles $V_{10}^*\otimes\sho_{X'}\cong V_{10}'\otimes \sho_{X'}$.\label{pullback bundle}
 \end{proposition}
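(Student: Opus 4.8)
The plan is to prove equality of these two rank-four subbundles of the trivial bundle $V_{10}^{*}\otimes\sho_{X'}\cong V_{10}'\otimes\sho_{X'}$ fibrewise, over the dense open locus where the associated length-two scheme is reduced, and then extend. Write $\iota\colon X'\hookrightarrow S^{[2]}$ for the embedding. Since $\iota^{*}Q_4^{*}$ and $G_4'(-H')$ are both subbundles, their quotients in $V_{10}'\otimes\sho_{X'}$ are locally free; hence it suffices to show the fibres coincide at a general point $x'$, because the two composites $\iota^{*}Q_4^{*}\to(V_{10}'\otimes\sho_{X'})/G_4'(-H')$ and $G_4'(-H')\to(V_{10}'\otimes\sho_{X'})/\iota^{*}Q_4^{*}$ then vanish on a dense open, hence identically (both target sheaves being locally free). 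So fix $x'=(s',f'^{*})$ with image $Z=\{s_1,s_2\}\subset S$, and let $E=E_{s'}$ be the stable bundle of Mukai vector $(3,h,5)$ indexed by $s'$, so $\shE^{*}|_{S\times\{s'\}}=E$.

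Next I record the two fibres. On the $S'$-side, tensoring the $S'$-version of the fundamental sequence (\ref{fundamentalSES}) with the tautological line $\sho_{X'}(-H')\hookrightarrow F'^{*}$ exhibits the fibre of $G_4'(-H')$ at $x'$ as $(G_4')_{s'}\otimes\bc f'^{*}\subset V_{10}'$; under $V_{10}'=\Hom(G_4',F')^{*}$ (the $S'$-analogue of \cref{V_10^*}) the element $g\otimes f'^{*}$ is the functional $\beta\mapsto\langle f'^{*},\beta_{s'}(g)\rangle$, where $\beta_{s'}\colon(G_4')_{s'}\to F'_{s'}$ is the fibre of $\beta\in\Hom(G_4',F')$. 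On the $S$-side, the tautological section $v=v_{x'}\in V_8=H^{0}(S,G_4)$ attached to $x'$ vanishes on $Z$ and, by (\ref{twopoints}), fits into $0\to\sho_S\xrightarrow{v}G_4\to E$. Precomposition with $G_4\to E$ identifies $\Hom(E,F)$ (four-dimensional by \cref{ext1=0}) with $\{\alpha\in\Hom(G_4,F):\alpha\circ v=0\in W_{10}\}$, and the compatibility of the two descriptions $\Hom(G_4,F)=V_{10}^{*}$ and $H^{0}(S^{[2]},Q_4)=V_{10}$ identifies this subspace with $\iota^{*}Q_4^{*}|_{x'}$.

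The heart of the proof is to transport the $S$-side picture through $\phi_\shE$. The central input is the inverse-kernel identity $\phi_\shE(\shE^{*})=\sho_{\Delta_{S'}}[-2]$: restricting it at $s'$ gives $\phi_\shE(E)=k(s')[-2]$, whence $\Hom(E,F)\cong\Ext^{2}(k(s'),G_4')\cong(G_4')_{s'}$ using $\wedge^{2}T_{s'}S'\cong\bc$. Since $\phi_\shE(F)=G_4'$ and $\phi_\shE(G_4)=F'[-2]$, the inclusion $\Hom(E,F)\hookrightarrow\Hom(G_4,F)$ becomes precomposition by $\phi_\shE(G_4\to E)$, which is a map $c\colon F'\to k(s')$, i.e.\ an element $c\in F'^{*}_{s'}$. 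A Serre-duality computation then shows that the image of $g\in(G_4')_{s'}$ in $V_{10}'=\Ext^{2}(F',G_4')$ (the identification of \cref{V_10 are dual to each other}) is the functional $\beta\mapsto\langle c,\beta_{s'}(g)\rangle$ on $\Hom(G_4',F')$. Comparing with the second paragraph, the two fibres coincide precisely when $c=f'^{*}$.

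Thus the main obstacle is the identification $c=\phi_\shE(G_4\to E)=f'^{*}\circ\ev_{s'}$. Here I would argue in families over $X'$: the map $G_4\to E$ is the restriction to $S\times\{x'\}$ of $G_4\to\shE^{*}(H')$ on $S\times X'$ (the dual of (\ref{Totalfamily})), itself the adjunction map $G_4\to\shE^{*}\otimes F'$ for $G_4=p_{1*}(\shE^{*}\otimes p_2^{*}F')$ followed by the tautological contraction $F'\to\sho_{X'}(H')$. Applying the relative transform and the inverse-kernel identity turns $G_4\to\shE^{*}\otimes F'$ into the restriction-to-graph map $F'\to\sho_{\Gamma}\otimes F'$, where $\Gamma\subset S'\times X'$ is the graph of $\pi\colon X'\to S'$; restricting at $x'$ produces $\ev_{s'}\colon F'\to F'_{s'}$, and the tautological contraction becomes $f'^{*}$ by the very definition of $\sho_{X'}(-H')\subset F'^{*}$. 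This gives $c=f'^{*}$ and finishes the argument. I expect the delicate points to be the correct shift and twist in the inverse-kernel computation in families, and the verification (used in the second paragraph) that the two incarnations of $V_{10}$ are compatibly identified, so that $\iota^{*}Q_4^{*}|_{x'}$ really is $\{\alpha:\alpha\circ v=0\}$.
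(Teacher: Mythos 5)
Your skeleton matches the paper's: reduce to a general point $x'$ using that both sheaves are rank-four subbundles of the same trivial bundle, bring in the section $v\in V_8$ vanishing on $Z$ and the sequence $0\to\sho_S\xrightarrow{v}G_4\to E$ from (\ref{twopoints}), and compare the two four-planes inside $V_{10}^*\cong V_{10}'$. Where you genuinely diverge is in how you pin down $G_4'(-H')|_{x'}$: you transport $\Hom(E,F)\hookrightarrow\Hom(G_4,F)$ through $\phi_\shE$, using $\phi_\shE(E)=k(s')[-2]$, $\phi_\shE(G_4)=F'[-2]$ and a Serre-duality computation, and reduce everything to the identity $c=f'^*$ for the covector $c=\phi_\shE(G_4\to E)$. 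The paper instead proves directly (Claim \ref{zero pairing}) that the pairing $\sho(h'-3H')\otimes G_4'(-H')\to W_{10}$ vanishes, by factoring the composite on $S\times X'$ through $G_4\to\mathcal{E}^*(H')$, whose kernel is exactly $\sho(h'-3H')$ — a shorter argument that only yields a containment, finished off by a dimension count. Your route is viable and arguably more conceptual (it explains the equality rather than just verifying a vanishing), but the steps you wave at (``a Serre-duality computation then shows'', ``applying the relative transform \dots turns \dots into the restriction-to-graph map'') carry real sign/normalization content and would need to be written out.

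The genuine gap is the one you flag yourself but do not close: the identification $\iota^*Q_4^*|_{x'}=\{\alpha\in\Hom(G_4,F):\alpha\circ v=0\}$. This is not a formal compatibility — it is where the paper spends the entire second half of its proof. One must use that $V_8$ is the space of linear syzygies of $V_{10}\subset\mathrm{Sym}^2W_{10}$ to see that the image of $v$ in $V_{10}\otimes W_{10}$ is a tensor $\sum_{i=1}^{6}v_i\otimes w_i$ of rank at most six with the $v_i$ exactly spanning the quadrics of $V_{10}$ vanishing on the $3$-plane $\langle\bp(F_{s_1}^*),\bp(F_{s_2}^*)\rangle\subset\bp(W_{10}^*)$, and then that the fibre of $Q_4^*$ at $Z$ (defined via $\ker(\mathrm{Sym}^2(F^{[2]})\to(\mathrm{Sym}^2F)^{[2]})$) is precisely the annihilator of that span. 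Only after this does the condition $\alpha\circ v=0$ (equivalently, factoring through $E$) describe the fibre of $Q_4^*$. Until you supply this bridge between the quadric incarnation of $V_{10}$ and the incarnation $V_{10}^*=\Hom(G_4,F)$, your second paragraph is an assertion and the proof is incomplete.
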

 \begin{proof}
     We have the following commutative diagram:
     $$
\xymatrix{
G_4'(-H') \ar[r]\ar[dr]& \mathcal{E}(H-H')\ar[r]& G_4^*(H)\\
&V_{10}'\cong V_{10}^*\ar[ur]&
}
$$
\begin{claim}
    The composition map $\sho(h'-3H')\otimes G_4'(-H')\hookrightarrow V_8\otimes V_{10}^*\rightarrow W_{10}$ is the zero map over $X'$.\label{zero pairing}
\end{claim}
\begin{proof}[Proof of the Claim]
We only need to prove that the composition of the maps on $S\times X'$ $$\sho(h'-3H')\otimes G_4'(-H')\rightarrow G_4\otimes V_{10}^*\rightarrow G_4\otimes G_4^*(H)\rightarrow \sho(H)$$is zero, because $\sho(H)$ is globally generated by $W_{10}$ and the source of the map lives on $X'$ and the target of the map lives on $S$.
    However, one can trace back the definition to see that the composition map $\sho(h'-3H')\otimes G_4'(-H')\rightarrow \sho(H)$ factors as follows: $$\sho(h'-3H')\otimes G_4'(-H')\rightarrow G_4\otimes \mathcal{E}(H-H')\rightarrow \mathcal{E}^*(H')\otimes \mathcal{E}(H-H')\rightarrow \sho(H).$$
    Since we have seen $\sho(h'-3H')$ as the kernel of the map $G_4\rightarrow \mathcal{E}^*(H')$ by the dual sequence to (\ref{Totalfamily}), we know the above composition is zero.
\end{proof}

The claim tells us that given any $x'\in X'$, we can associate to it a line $\bc.v\subset V_8$ and a four dimensional subspace $U_4=G_4'(-H')|_{x'}\subset V_{10}^*$ such that $v\otimes U_{4}$ is sent to 0 via $V_8\otimes V_{10}^*\rightarrow W_{10}$.

On the other hand, $v\in V_8$ vanishes on two points $\{s_1,s_2\}$ as a global section of $G_4$. 
We will assume $s_1\neq s_2$, since we only need to prove the proposition over an open subset of $X'$ because both bundles mentioned in the statement are subbundles of the same trivial bundle $V_{10}^*\cong V_{10}'$ over $X'$. If we consider the pullback of the map $V_8=H^0(S,G_4)\rightarrow G_4$ over the $\bp^1$-bundle $X$, and recall that the pullback of $G_4$ is equal to the image of the restriction of the map $V_8\rightarrow V_{10}\otimes \sho_{\mathds{P}(W_{10}^*)}(1)$ on $\bp(W_{10}^*)$ to $X$, then we see that the image of $v\in V_8$ vanishes on the 3-dimensional subspace generated by $\mathds{P}(F_{s_i}^*)\subset X\subset\bp(W_{10}^*)$. Consequently the image of $v\in V_8$ in $V_{10}\otimes W_{10}$ has rank at most six and thus can be expressed as $\Sigma_{i=1}^6(v_i\otimes w_i)$. By the same reasoning we can assume that these $v_i$ are linearly independent. Since $V_8$ is the space of linear syzygies between $V_{10}\subset \mathrm{Sym}^2W_{10}$, we know that all these $v_i\in V_{10}\subset H^0(\bp(W_{10}^*),\sho(2))$ vanish on this 3-dimensional projective subspace.

Recall that for a point $\{s_1,s_2\}\in S^{[2]}$, the fiber of $Q_4^*\subset V_{10}^*$ over this point consists of those elements of $V_{10}^*$ which have zero pairings with all the elements in $V_{10}\subset H^0(\bp(W_{10}^*),\sho(2))$ vanishing on the 3-dimensional subspace generated by $\mathds{P}(F_{s_i}^*)$. As a result, when $x'\in X'$ is mapped to $\{s_1,s_2\}\in S^{[2]}$, the fiber of the pullback of $Q_4^*$ over $x'$ consists of those elements of $V_{10}^*$ that have zero pairing with those $v_i$ in the last paragraph.  On the other hand, we already know that elements in $U_4=G_4'(-H')|_{x'}\subset V_{10}^*$ have zero pairing with those $v_i$, which proves the Proposition.
 \end{proof}
 \begin{thm}
     $X'$ is isomorphic to the degeneracy locus of $V_8\rightarrow G_4^{[2]}$ over $S^{[2]}$.
 \end{thm}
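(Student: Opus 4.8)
The plan is to show that the morphism
$$\psi\colon X'\longrightarrow S^{[2]},\qquad x'=(s',f'^*)\longmapsto Z=\{s_1,s_2\}$$
constructed above (the length-two degeneracy scheme of $\shE(-H')\to G_4^*$ over $S\times\{x'\}$) is an isomorphism onto the degeneracy locus $D\subseteq S^{[2]}$ of the map $V_8\otimes\sho_{S^{[2]}}\to G_4^{[2]}$. Both $V_8$ and $G_4^{[2]}$ have rank $8$, so $D$ is the zero scheme of $\det\big(V_8\to G_4^{[2]}\big)\in H^0\big(S^{[2]},\det G_4^{[2]}\big)$, hence a divisor of expected dimension $3=\dim X'$. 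We already know $\psi(X')\subseteq D$, since the section $v\in V_8$ produced by \eqref{twopoints} vanishes on $Z$. It therefore suffices to construct an inverse $D\to X'$.

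To invert $\psi$, I start from $Z\in D$ and choose a nonzero $v\in\ker\big(V_8\to G_4^{[2]}|_Z\big)=H^0(S,G_4\otimes\mathcal{I}_Z)$, i.e.\ a section of $G_4$ vanishing on $Z$. Dualizing $\sho\xrightarrow{\,v\,}G_4$ gives $v^{\vee}\colon G_4^*\to\sho$ with image $\mathcal{I}_Z$ (once one checks $v$ vanishes exactly along the length-two scheme $Z$), so that $E^*:=\ker(v^{\vee})$ fits in $0\to E^*\to G_4^*\to\mathcal{I}_Z\to 0$. A Mukai-vector computation yields $v(E)=(3,h,5)$, and the slope argument of \cref{generic injective} (using stability of $G_4^*$ and $\Pic(S)=\bz h$) shows $E^*$ is locally free with $E$ slope stable, so $s':=[E]\in S'$. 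The inclusion $E^*\hookrightarrow G_4^*$ is a nonzero element of $\Hom(G_4,E)$; via the defining map $\shE\otimes(F'_{s'})^*\to G_4^*$ of Section 4 there is a canonical arrow $(F'_{s'})^*\to\Hom(G_4,E)$, which is an isomorphism of two-dimensional spaces (the dimension $\dim\Hom(G_4,E)=2$ follows from $\langle v(G_4),v(E)\rangle=-2$ together with $\Ext^2(G_4,E)=\Hom(E,G_4)^*=0$ by stability). Its projectivization $f'^*\in\mathds{P}\big((F'_{s'})^*\big)$ together with $s'$ gives the point $(s',f'^*)\in X'=\mathds{P}_{S'}(F'^*)$, defining the assignment $Z\mapsto(s',f'^*)$.

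To promote this to a morphism and verify it inverts $\psi$, the plan is to run the construction in families over $D$: there the kernel of $V_8\otimes\sho_D\to G_4^{[2]}|_D$ is a line bundle $\mathcal{L}$ (the corank-one statement below), giving a universal section $\mathcal{L}\to G_4^{[2]}$ and hence, over $S\times D$, a subsheaf whose cokernel is a flat family of stable sheaves with Mukai vector $(3,h,5)$; by the universal property of $S'=M_S(3,h,5)$ this is classified by $D\to S'$, and remembering the inclusion into $G_4^*$ upgrades it to $D\to X'$. That the two morphisms are mutually inverse is then read off from the defining sequences \eqref{Totalfamily} and \eqref{twopoints}: the Eagon–Northcott construction applied to $\shE(-H')\to G_4^*$, dualized, returns exactly the section $v$ and the subsheaf $E^*\hookrightarrow G_4^*$ from which $(s',f'^*)$ was reconstructed. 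In particular $\psi$ is injective, and the fibre identification $\mathds{P}\big((F'_{s'})^*\big)\cong\{\,v\in V_8:v|_Z=0\,\}/\bc^{\times}$ matches the $\bp^1$-bundle structure of $X'$ over $S'$.

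The main obstacle is the corank-one statement: one must show $\dim H^0(S,G_4\otimes\mathcal{I}_Z)=1$ for \emph{every} $Z\in D$, so that $\mathcal{L}$ is a genuine line bundle and the universal cokernel is a flat family of honest rank-three stable bundles (with no rank jump along a sublocus of $D$). For generic $Z\in D$ this is automatic from the corank-one stratification of a determinantal divisor, which already gives birationality of $\psi$; the delicate part is to exclude higher corank everywhere. I would do this by showing $\psi$ is unramified — injective on tangent spaces — using the tautological/normal-bundle identifications of \cref{pullback bundle} and \eqref{fundamentalSES}; an unramified proper bijection from the smooth threefold $X'$ onto the divisor $D$ then forces $D$ to be smooth and $\psi$ to be an isomorphism. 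An alternative route is to prove directly that $D$ is integral and normal and to invoke Zariski's main theorem for the birational bijective $\psi$; either way the essential point is the uniqueness up to scalar of the vanishing section $v$ and the stability of its cokernel $E$.
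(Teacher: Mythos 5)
Your overall strategy — build an explicit inverse to $\psi$ by attaching to each $Z$ in the degeneracy locus $D$ the section $v\in H^0(S,G_4\otimes\mathcal{I}_Z)$ and the stable kernel $E^*=\ker(G_4^*\to\mathcal{I}_Z)$ — is genuinely different from the paper's, and its local pieces are essentially sound; but the whole construction hinges on the corank-one statement $\dim H^0(S,G_4\otimes\mathcal{I}_Z)=1$ for \emph{every} $Z\in D$, which you correctly single out as the main obstacle and then do not actually prove. Showing $\psi$ unramified controls tangent vectors only; it cannot rule out two linearly independent sections $v_1,v_2$ vanishing on the same $Z$, i.e.\ two distinct points of $X'$ over one point of $D$ (the normalization of a nodal variety is finite, unramified and birational without being injective). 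So the ``unramified proper bijection'' you feed into your final step presupposes exactly the injectivity (equivalently, corank one everywhere) that it is meant to establish: the argument is circular at its key point. Your fallback — prove $D$ integral and normal and apply Zariski's main theorem — is the right shape, but you offer no way to establish normality of the determinantal divisor $D$, nor even that $\psi$ has degree one onto it; the ``corank-one stratification'' only gives expected codimensions, not actual ones.

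The paper closes precisely these gaps with hyper-K\"ahler input that your proposal does not supply: $D$ has class $h-4\delta$, which generates an extremal ray of the pseudoeffective cone of $S^{[2]}$, satisfies $q_{S^{[2]}}(h-4\delta)=-2$ and $h^0(\sho(h-4\delta))=1$, hence is a prime exceptional divisor; Druel's theorem yields a divisorial contraction with exceptional locus $D$, and \cite[Theorem 4.2]{KapustkaVanGeemen} identifies $D$ as a $\bp^1$-fibration over a K3 surface, in particular \emph{smooth}; finally the computation $(2h-7\delta)^3\cdot(h-4\delta)=264=(4H'-h')^3$ shows the finite map $X'\to D$ (finiteness comes cheaply from very ampleness of the pullback $4H'-h'$ of $2h-7\delta$) is birational, so Zariski's main theorem applies. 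To salvage your route you would have to either import this primality/smoothness input or give an independent argument that no $Z\in D$ admits two independent sections of $G_4$ vanishing on it; the proposal contains neither. Two smaller gaps: you must check that $v$ vanishes \emph{exactly} on the length-two scheme $Z$ (stability of the rank-3 kernel together with $v(E^*)^2\ge-2$ does this, but it needs saying), and $\dim\Hom(G_4,E)=2$ requires $\Ext^1(G_4,E)=0$, which the paper extracts from $\phi_\shE(G_4)=F'[-2]$ rather than from the Euler pairing and $\Ext^2$-vanishing alone.
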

 \begin{proof}
     The pullback of $\det(Q_4)=2h-7\delta$ is equal to $\det(G_4'^*(H'))=4H'-h'$ which is very ample. Thus the map $X'\rightarrow S^{[2]}$ does not contract any curve and is a finite surjective map to the degeneracy locus of $V_8\rightarrow G_4^{[2]}$. 
     
     The degeneracy locus of $V_8\rightarrow G_4^{[2]}$ is an effective divisor of class $h-4\delta$. By \cite{DHOV}, the nef cone and the movable cone of $S^{[2]}$ are the same and are generated by $h$ and $h-\frac{15}{4}\delta$. In particular any other hyper-K\"ahler manifold birational to $S^{[2]}$ must be isomorphic to $S^{[2]}$ itself (see \cite[Section 3.2]{DHOV}). The pseudoeffective cone of $S^{[2]}$ is dual to the movable cone and is thus generated by $\delta$ and $h-4\delta$. Consequently, the effective divisor $h-4\delta$ cannot be (nontrivially) expressed as the sum of two other effective divisors. Since the divisor $h-4\delta$ does not lie in the movable cone, we must have $h^0(S^{[2]},\sho(h-4\delta))=1$, and we will not distinguish between the divisor class $h-4\delta$ and the prime divisor representing it. Moreover, since $q_{S^{[2]}}(h-4\delta,h-4\delta)=-2$, it is a prime exceptional divisor in the sense of \cite{Boucksom}. By \cite{Druel}, there exists a divisorial contraction from $S^{[2]}$ to some normal projective variety such that the exceptional locus is exactly $h-4\delta$. Then Theorem 4.2 in \cite{KapustkaVanGeemen} shows that the divisor $h-4\delta$ is a $\bp^1$-fibration over its image (which is a K3 surface) in this contraction. In particular, the divisor $h-4\delta$ itself is a smooth variety.

     Calculation gives $(2h-7\delta)^3.(h-4\delta)=264=(4H'-h')^3$. As a result, the finite surjective map from $X'$ to $h-4\delta$ is finite and birational with smooth target. So it must be an isomorphism by Zariski's main theorem.
 \end{proof}
\begin{proposition}
    There exists an injective morphism from $X'$ to $M_{S'}(2,h',7)$. The image consists of those $[K']\in M_{S'}(2,h',7)$ such that $\Hom(K',F')\neq 0$. The points in the image can also be described as those $[K']\in M_{S'}(2,h',7)$ such that $K'$ is not locally free, or as those $[K']\in M_{S'}(2,h',7)$ such that $H^1(S',K'(-h'))=0$. \label{F' locally free}
\end{proposition}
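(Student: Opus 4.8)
The plan is to realise the morphism by elementary modifications of $F'$ at a point, packaged into a universal family over $S'\times X'$. Write $\pi\colon X'\to S'$ for the bundle projection; dualizing the tautological inclusion $\sho_{X'}(-H')\hookrightarrow\pi^*F'^*$ gives a surjection $\pi^*F'\twoheadrightarrow\sho_{X'}(H')$. Let $j=(\pi,\mathrm{id})\colon X'\hookrightarrow S'\times X'$ be the graph of $\pi$ and $p\colon S'\times X'\to S'$ the projection. Composing the restriction $p^*F'\to j_*j^*p^*F'=j_*\pi^*F'$ with the pushforward of the tautological quotient produces a surjection $p^*F'\twoheadrightarrow j_*\sho_{X'}(H')$, and I let $\shk'$ be its kernel. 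Then $\shk'$ is flat over $X'$, and over $S'\times\{x'\}$, with $x'=(s',[q])$ and $q\colon F'_{s'}\twoheadrightarrow\bc$, it restricts to the kernel $K'$ of
\[
0\rightarrow K'\rightarrow F'\rightarrow \bc_{s'}\rightarrow 0.
\]
A Chern character computation gives $v(K')=(2,h',7)$, and $K'$ is slope stable: any saturated rank one subsheaf of $K'$ is a subsheaf $\sho(ah')\subset F'$, so $\mu(\sho(ah'))=a(h',h')<\mu(F')=\tfrac12(h',h')$ forces $a\le0$ and hence $\mu<\mu(K')$. Thus $\shk'$ is a flat family of stable sheaves and induces the morphism $X'\to M_{S'}(2,h',7)$.

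Next I would run a reconstruction argument establishing injectivity and the identification of the image with the non-locally-free locus. For any $[K']\in M_{S'}(2,h',7)$ with $K'$ not locally free, the reflexive hull $K'^{\vee\vee}$ is locally free with $v(K'^{\vee\vee})=(2,h',7+\ell)$, where $\ell=\mathrm{length}(K'^{\vee\vee}/K')\ge1$. It is again slope stable: a rank one quotient $K'^{\vee\vee}\twoheadrightarrow Q$ receives a nonzero map from the stable sheaf $K'$ (else $K'$ would lie in a rank one subsheaf), so $\mu(Q)>\mu(K')=\mu(K'^{\vee\vee})$. Then $v(K'^{\vee\vee})^2=2-4\ell\ge-2$ forces $\ell=1$, so $v(K'^{\vee\vee})=(2,h',8)$ and, by uniqueness of the spherical bundle, $K'^{\vee\vee}\cong F'$. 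Hence $K'$ is a modification $0\to K'\to F'\to\bc_{s'}\to0$ and lies in the image, while conversely every such $K'$ fails to be locally free exactly at $s'$. Since $K'$ recovers both $s'$ (its non-locally-free point) and the quotient $F'\to F'/K'=\bc_{s'}$, which is precisely the datum of $x'$, the morphism is injective, and its image is the non-locally-free locus.

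For the description via $\Hom(K',F')$, the inclusion $K'\hookrightarrow F'$ shows $\Hom(K',F')\neq0$ on the image. Conversely, a nonzero $\phi\colon K'\to F'$ between stable sheaves of equal slope $\tfrac12(h',h')$ and rank two cannot have rank one image, for such an image would be simultaneously a quotient of $K'$ (of slope $>\mu(K')$) and a subsheaf of $F'$ (of slope $<\mu(F')$), a contradiction; hence $\phi$ is injective with length one cokernel $\bc_{s'}$, so $K'$ is a modification and lies in the image. Therefore $\{[K']:\Hom(K',F')\neq0\}$ coincides with the image and with the non-locally-free locus.

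The cohomological criterion is the most delicate step, and I would anchor it on the numerical identity $\chi(F',K')=-\langle v(F'),v(K')\rangle=-\big((h',h')-2\cdot8-2\cdot7\big)=0$. Since $F'$ and $K'$ are stable of equal slope with distinct Mukai vectors, $\Hom(F',K')=0$, while Serre duality gives $\Ext^2(F',K')\cong\Hom(K',F')^*$; hence the vanishing of $\chi$ collapses to $\dim\Ext^1(F',K')=\dim\Hom(K',F')$, so the image is exactly the locus where $\Ext^1(F',K')\neq0$. Because $F'$ has rank two with $\det F'=\sho(h')$, there is a canonical identification $F'^\vee\cong F'(-h')$, and $\Ext^1(F',K')=H^1(S',K'\otimes F'(-h'))$. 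To reach the cohomology appearing in the statement I would compute $H^*(S',F'(-h'))$ directly: stability gives $H^0(F'(-h'))=\Hom(\sho(h'),F')=0$, Serre duality together with $F'^\vee(h')\cong F'$ gives $H^2(F'(-h'))\cong(W_{10}')^*$, and $\chi(F'(-h'))=10$ then forces $H^1(F'(-h'))=0$; twisting the modification sequence by $-h'$ determines the relevant $H^1$ along the image. The main obstacle is precisely this final identification—pinning the cohomological jump locus to the image itself, and separating it from the spurious locus produced by the duality $(-)^\vee(h')$—which is exactly why I would keep the whole equivalence anchored on $\Ext^1(F',K')\cong\Hom(K',F')$ rather than on an untwisted cohomology group.
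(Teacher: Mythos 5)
Your construction of the morphism, the identification of the image with the non-locally-free locus via the double dual (using $v(K'^{\vee\vee})^2\geq-2$ to force $\ell=1$ and $K'^{\vee\vee}\cong F'$), and the characterization of the image by $\Hom(K',F')\neq0$ are all correct and essentially identical to the paper's argument; the universal family over $S'\times X'$ is more than the paper writes down, but it is harmless. One point of record before the real issue: the statement as printed has a sign error. Both the paper's own proof and your own computation --- the twisted sequence $0\to K'(-h')\to F'(-h')\to k(s')\to0$ together with $H^0(F'(-h'))=H^1(F'(-h'))=0$ gives $H^1(S',K'(-h'))\cong\bc$ on the image --- show that the image is the locus where $H^1(S',K'(-h'))\neq0$, not where it vanishes.

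The genuine gap is the one you flag yourself: you never prove the converse implication that $H^1(S',K'(-h'))=0$ for every $[K']$ \emph{outside} the image. Your detour through $\chi(F',K')=0$ and $\Ext^1(F',K')\cong\Hom(K',F')$ does not close it, because $\Ext^1(F',K')=H^1(S',K'\otimes F'(-h'))$ is the $H^1$ of a rank-four sheaf and is not directly comparable to $H^1(S',K'(-h'))$; that whole reduction can be discarded. The missing step is short and is exactly what the paper does: off the image $K'$ is locally free of rank $2$ with $\det K'=\sho(h')$, so $K'(-h')\cong K'^{\vee}$ and Serre duality gives $H^1(S',K'(-h'))\cong H^1(S',K')^*$; and $H^1(S',K')=0$ by the extension argument of \cref{h^1(F)=0}, which the paper formulated precisely so as to apply to any locally free stable sheaf of Mukai vector $(2,h',7)$: a nonzero class in $\Ext^1(K',\sho_{S'})\cong H^1(S',K')^*$ would produce a slope stable extension $B$ with $v(B)=(3,h',8)$ and $v(B)^2=-18<-2$, which is absurd. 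Inserting this (and correcting the sign in the statement) completes your proof.
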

\begin{proof}
    Pick $x'=(s',f'^*)$, this gives a map $F'\rightarrow F'|_{s'}\rightarrow k(s')$ over $S'$. The kernel is a stable sheaf $K'$ with Mukai vector $(2,h',7)$ fitting into the short exact sequence
    \begin{equation}
        0\rightarrow K'\rightarrow F'\rightarrow k(s')\rightarrow 0.\label{X'_ses}
    \end{equation}
    We have $(K')^{**}=F'$ and $F'/((K')^{**})\cong k(s')$, which shows the injectivity of the map $X'\rightarrow M_{S'}(2,h',7)$. Since $K'\subset F'$ coincide outside a codimension 2 subset, $K'$ is not locally free.

    Given any $[K']\in M_{S'}(2,h',7)$ together with a nonzero map $K'\rightarrow F'$, this map must be injective by the stability of $K'$ and $F'$, and the cokernel is a skyscraper sheaf. Given $[K']\in M_{S'}(2,h',7)$ which is not locally free, then $(K')^{**}$ is slope stable with Mukai vector $(2,h',t)$, where $t\geq 8$. Since $v((K')^{**})^2\geq-2$, we have $t=8$ and $(K')^{**}\cong F'$. 
    
    When $[K']$ lies in the image of $X'$, $H^1(S',K'(-h'))\neq 0$ by the above short exact sequence. When $K'$ is locally free, then $K'(-h
    ')\cong K'^*$ and $H^1(S',K'(-h'))= 0$ using the argument in \cref{h^1(F)=0}.
\end{proof}

\section{Explicit isomorphism $S^{[2]}\cong M_{S'}(2,h',7)$}
    Now we would like to construct an isomorphism between $M_{S'}(2,h',7)$ and $S^{[2]}$. The idea is expressed by the following claim:
    \begin{claim}
        Given any stable sheaf $K'$ with Mukai vector $(2,h',7)$, the first cohomology sheaf of $p_{1*}(\shE^*\otimes p_2^*K')$ (here $p_{1*}$ is the derived pushforward) is the structure sheaf of a length 2 subscheme of $S$.
    \end{claim}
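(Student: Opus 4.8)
The plan is to study the single object $C:=\Psi(K')$, where $\Psi(-):=\deriver p_{1*}(\shE^*\otimes p_2^*(-))$ is the functor in the claim. Since $\omega_{S'}=\sho_{S'}$, the kernel of the inverse equivalence $\phi_\shE^{-1}$ is $\shE^\vee\otimes p_2^*\omega_{S'}[\dim S']=\shE^*[2]$, so $\Psi=\phi_\shE^{-1}(-)[-2]$ and $\Psi$ is an equivalence up to shift. Feeding $(2,h',7)$ through the cohomological transform recorded above (so that $H\phi_\shE^{-1}(3,h',5)=(0,0,1)$, $H\phi_\shE^{-1}(0,0,1)=(3,h,5)$ and $H\phi_\shE^{-1}(0,h',10)=-(0,h,10)$) gives $v(C)=(1,0,-1)$. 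The claim is then the assertion that $\mathcal{H}^0(C)\cong\sho_S$ and $\mathcal{H}^1(C)\cong\sho_Z$ for a length two $Z\subset S$, all other cohomology sheaves vanishing.

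First I would pin down the range of nonzero cohomology. As $p_1$ has two-dimensional fibres, $\mathcal{H}^i(C)=0$ for $i\notin\{0,1,2\}$. For $s\in S$ the restriction $E_s':=\shE|_{\{s\}\times S'}$ is the slope stable bundle with Mukai vector $(3,h',5)$, and $\mu(K')=\tfrac12(h',h')>\tfrac13(h',h')=\mu(E_s')$ gives $\Ext^2(E_s',K')\cong\Hom(K',E_s')^*=0$ on every fibre; by base change $\mathcal{H}^2(C)=0$, so $C$ lives in degrees $0,1$. Representing $C$ locally by a two-term complex of vector bundles shows $\mathcal{H}^0(C)$ is torsion free; setting $r_1:=\mathrm{rank}\,\mathcal{H}^1(C)$ we get $\mathrm{rank}\,\mathcal{H}^0(C)=1+r_1$. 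Over a general $s$ the Riemann--Roch count $\chi(E_s',K')=-\langle(3,h',5),(2,h',7)\rangle=1$ together with $\Ext^2=0$ gives $\dim\Hom(E_s',K')=1+r_1$ by base change, while the adjunction $\Hom(E_s',K')=\Hom(\phi_\shE(k(s)),K')=\Hom(k(s),C[2])$, evaluated through the local-to-global spectral sequence for $C$, gives $\dim\Hom(E_s',K')=1+3r_1$. Hence $r_1=0$: $\mathcal{H}^1(C)$ is torsion and $\mathcal{H}^0(C)$ is torsion free of rank one.

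The decisive step is to transform back, exploiting that $\phi_\shE(C)=K'[-2]$ is concentrated in cohomological degree $2$. From the previous step $\mathcal{H}^0(C)$ is torsion free of rank one and $\mathcal{H}^1(C)$ is torsion, and since $c_1(C)=0$ their first Chern classes agree and are effective. Suppose $\mathcal{H}^0(C)\not\cong\sho_S$, so that either $c_1(\mathcal{H}^0(C))\neq0$ or $\mathcal{H}^0(C)=\mathcal{I}_{W_0}$ with $W_0\neq\emptyset$. Applying $\phi_\shE$ to the canonical triangle $\mathcal{H}^0(C)\to C\to\mathcal{H}^1(C)[-1]\to\mathcal{H}^0(C)[1]$ and inserting $\phi_\shE(\sho_S)=E_8'[-2]$ and $\phi_\shE(k(s))=E_s'$ (for instance via $0\to\mathcal{I}_{W_0}\to\sho_S\to\sho_{W_0}\to0$), a cohomology-sheaf chase shows that $\phi_\shE(C)$ would then acquire a nonzero cohomology sheaf in degree $1$ --- built from the rank-three bundles $E_s'$ ($s\in W_0$) and/or the transform of the one-dimensional part of $c_1$ --- which cannot cancel and contradicts the concentration of $K'[-2]$ in degree $2$. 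Hence $\mathcal{H}^0(C)\cong\sho_S$, and then $v(C)=(1,0,-1)$ forces $\mathcal{H}^1(C)\cong\sho_Z$ with $\ell(Z)=2$, which is the claim. Read in degree $2$, the same transformed triangle yields at once the short exact sequence $0\to\phi_\shE(\sho_Z)\to E_8'\to K'\to0$ of the following theorem.

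I expect the technical heart to be precisely this degree-bookkeeping. The degenerate shapes --- $\mathcal{H}^1(C)$ with one-dimensional support, or $\mathcal{H}^0(C)=\mathcal{I}_{W_0}$ with $\ell(W_0)\in\{1,2\}$ --- are all numerically admissible: because $\chi(E_8',K')=0$ and $\Ext^2(E_8',K')=0$, the (non)vanishing of $\Hom(E_8',K')=H^0(\mathcal{H}^0(C))$, which is exactly what distinguishes $\sho_S$ from $\mathcal{I}_{W_0}$, is invisible to Euler characteristics. All the leverage comes from $K'$ being an honest sheaf, pinning $\phi_\shE(C)$ to a single degree; making the cancellation argument in the transformed triangle airtight --- in particular showing that the degree-$1$ terms never recombine into something concentrated in degree $2$ --- is the one place I anticipate real work.
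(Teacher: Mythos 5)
Your overall strategy --- set $C=\phi_\shE^{-1}(K')[-2]$, note $v(C)=(1,0,-1)$, and exploit that $\phi_\shE(C)=K'[-2]$ is concentrated in a single degree by transforming the truncation triangle $\mathcal{H}^0(C)\to C\to\mathcal{H}^1(C)[-1]$ --- is the same leverage the paper uses for its exclusion steps (ruling out $\mathcal{I}_{s_2}\oplus k(s_1)[-1]$, $\mathcal{I}_Z[0]$, and $k(s)^{\oplus 2}$), and those endgame cases you would handle essentially as the paper does. But there are two genuine gaps. First, your argument that $\mathcal{H}^1(C)$ is torsion is broken: for a locally free sheaf $\mathcal{L}$ on a surface one has $\Ext^p(k(s),\mathcal{L})=0$ for $p=0,1$ and $\Ext^2(k(s),\mathcal{L})\cong\mathcal{L}_s$ (you appear to have substituted the self-extensions $\dim\Ext^p(k(s),k(s))=\binom{2}{p}$), so at a general point the local-to-global spectral sequence for $\Hom(k(s),C[2])$ yields exactly $1+r_1$, in agreement with base change; there is no ``$1+3r_1$'' and no contradiction, so $r_1=0$ does not follow from this count.

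Second, the step you flag as ``the one place I anticipate real work'' --- excluding one-dimensional support of $\mathcal{H}^1(C)$, equivalently showing $\hom(E'_s,K')=1$ for all but finitely many $s$ --- is precisely where the paper's proof is not a pointwise triangle chase at all. Note that $\chi(E'_s,K')=1$ and $\Ext^2(E'_s,K')=0$ force $\Hom(E'_s,K')\neq0$ for \emph{every} $s$, and a nonsplit extension $0\to E'\to M\to K'\to 0$ has $v(M)^2=0$, so no Yoshioka-type ``$v^2\geq-2$'' argument kills $\Ext^1(E'_s,K')$ fibrewise. The paper instead argues globally: it first verifies the claim explicitly for $[K']$ in the image of $X'$ (via Eagon--Northcott), propagates this to an open subset $U$ of $M_{S'}(2,h',7)$ using the universal family on the triple product and semicontinuity, deduces $\Hom(E_8',K')\neq0$ for general and hence (by semicontinuity again) for all $[K']$ --- the step the author calls ``the essential point of the whole argument'' --- and only then controls the support of $\mathcal{H}^1$ for an arbitrary $K'$ by analysing the kernel of a nonzero map $E_8'\to K'$ and the quotients $\Ker\twoheadrightarrow E'$. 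Your proposal has no substitute for this global input, and the assertion that the degree-one terms ``cannot cancel'' is not backed by any mechanism that rules out $\mathcal{H}^1(C)$ supported on a curve in $|dh|$; as your own last paragraph concedes, the Euler characteristics do not see the difference. As written, the proof does not close.
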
 Since the Mukai vector of the complex $p_{1*}(\shE^*\otimes p_2^*K')$ is equal to $(1,0,-1)$, the above statement is requiring that the ordinary pushforward $\mathcal{H}^0(p_{1*}(\shE^*\otimes p_2^*K'))$ is equal to the structure sheaf $\sho_S$. It is interesting and important to notice that $\chi(E',K')=1$, i.e. the Euler pairing between $(2,h',7)$ and $(3,h',5)$ is $ -1$.

\begin{proof}[Proof of the Claim] First we check the above claim for $[K']$ in the image of $X'\rightarrow M_{S'}(2,h',7)$. Applying the derived functor $p_{1*}(\shE^*\otimes p_2^*(-))$ to (\ref{X'_ses}), we have to recover the exact sequence (\ref{twopoints}) that we obtained from taking dual of Eagon-Northcott complex (or \cref{generic injective}) and the cokernel is $k(s_1)\oplus k(s_2)$ (when $s_1\neq s_2$). This shows that the above claim holds for general points in the image of $X'$ inside $M_{S'}(2,h',7)$.

    Let $\shk'$ be the universal sheaf over $S'\times M_{S'}(2,h',7)$. Over $S'\times M_{S'}(3,h',5)\times M_{S'}(2,h',7)$, we can consider $\shE^*\otimes \shk'$ and its derived pushforward to $M_{S'}(3,h',5)\times M_{S'}(2,h',7)$. Fiberwise, we have $H^2(S,E'^*\otimes K')=\Hom(K',E')^*=0$ by stability. Then the first higher direct image $\shn$ must be a torsion sheaf by upper semicontinuity of $h^1$. Since the intersection of the support of $\shn$ with some fibers of the projection $M_{S'}(3,h',5)\times M_{S'}(2,h',7)\rightarrow M_{S'}(2,h',7)$ is equal to 2 points, we can assume that for an open subset $U$ of $M_{S'}(2,h',7)$, the intersection of the support of $\shn$ with the fibers of $M_{S'}(3,h',5)\times U\rightarrow U$ is zero dimensional. Let $N$ denote the pushforward of $\shn|_U$ via the map $M_{S'}(3,h',5)\times U\rightarrow U$, we claim that the rank of $N$ is at most $2$ over $U$. In fact, the rank can never be larger than $2$ because otherwise the ordinary pushforward $\mathcal{H}^0(p_{1*}(\shE^*\otimes p_2^*K'))$ would be a torsion free sheaf with Mukai vector $(1,0,t)$, with $t>1$, which is absurd.

    The rank of $N$ can never be equal to 1 over $U\subset M_{S'}(2,h',7)$ by the following reasoning. If not, $\mathcal{H}^1(p_{1*}(\shE^*\otimes p_2^*K'))$ is a skyscraper sheaf supported on a single point $s_1\in S$ for some $[K']\in U\subset M_{S'}(2,h',7)$ and $\mathcal{H}^0(p_{1*}(\shE^*\otimes p_2^*K'))$ would be the ideal sheaf $\mathcal{I}_{s_2}$ of another point $s_2\in S$. Consider the exact triangle 
    $$\mathcal{I}_{s_2}[0] \rightarrow p_{1*}(\shE^*\otimes p_2^*K') \rightarrow k(s_1)[-1]\rightarrow \mathcal{I}_{s_2}[1].$$
    If we apply the derived functor $p_{2*}(\shE\otimes p_1^*(-))$ to the above exact triangle and take the cohomology, we get $0=\mathcal{H}^0(p_{2*}(\shE\otimes p_1^*(k(s_1)[-1])))\cong\mathcal{H}^1(p_{2*}(\shE\otimes p_1^*(\mathcal{I}_{s_2})))$.
    %$$0\rightarrow \mathcal{H}^1(p_{2*}(\shE\otimes p_1^*(\mathcal{G}^.)))\rightarrow \mathcal{H}^2(p_{2*}(\shE\otimes p_1^*(\mathcal{I}_{s_2})))\rightarrow K'\rightarrow \mathcal{H}^2(p_{2*}(\shE\otimes p_1^*(\mathcal{G}^.)))\rightarrow 0
    %$$
    Using the sequence $0\rightarrow \mathcal{I}_{s_2}\rightarrow\sho_S\rightarrow\sho_{s_2}\rightarrow0$, one can show that $\mathcal{H}^1(p_{2*}(\shE\otimes p_1^*(\mathcal{I}_{s_2})))=E'$ by \cref{vanishing E}, where $E'$ corresponds to $s_2\in S=M_{S'}(3,h',5)$. This leads to a contradiction.

    Similarly, the rank of $N$ can never be equal to 0 over $U\subset M_{S'}(2,h',7)$. Otherwise we would have $p_{1*}(\shE^*\otimes p_2^*K')=I_{Z}$ where $Z$ is a length two subscheme of $S$, for some $[K']\in U\subset M_{S'}(2,h',7)$. However, the cohomology of $p_{2*}(\shE^*\otimes p_1^*\mathcal{I}_Z)$ is not concentrated in a single degree by a simple calculation.

    As a result, when $[K']\in U\subset M_{S'}(2,h',7)$, we must have $\mathcal{H}^0(p_{1*}(\shE^*\otimes p_2^*K'))=\sho_S$ and the length of $\mathcal{H}^1(p_{1*}(\shE^*\otimes p_2^*K'))$ is equal to 2. In order to show that $\mathcal{H}^1(p_{1*}(\shE^*\otimes p_2^*K'))$ is equal to the structure sheaf of a length 2 subscheme, we only need to exclude the possibility that $\mathcal{H}^1(p_{1*}(\shE^*\otimes p_2^*K'))=k(s)\oplus k(s)$, for some $s\in S$. If so, we consider the image of the exact triangle $\sho_S[0] \rightarrow p_{1*}(\shE^*\otimes p_2^*K') \rightarrow (k(s)\oplus k(s))[-1]\rightarrow \sho_{S}[1]$
    via the derived functor $p_{2*}(\shE\otimes p_1^*(-))$ and get
    $$0\rightarrow E'\oplus E' \rightarrow E_8'\rightarrow K'\rightarrow 0,$$
    where $E'$ is given by $s\in S=M_{S'}(3,h',5)$. Recall that the inverse Fourier-Mukai transform gives $p_{1*}(\shE^*\otimes p_2^*E_8')=\sho_S$, and this leads to $\dim(\Hom(E',E_8'))=1$. Consequently the map $E'\oplus E' \rightarrow E_8'$ cannot be injective and this leads to a contradiction.

    Up until now, we have constructed a rational map from $M_{S'}(2,h',7)$ to $S^{[2]}=\mathrm{Hilb}^2(M_{S'}(3,h',5))$, which is compatible with the embeddings of $X'$. The locus where this rational map is well defined consists of those $[K']$ such that the dimension of the support of $\mathcal{H}^1(p_{1*}(\shE^*\otimes p_2^*K'))$ is 0. And for such a $[K']$ in this definition domain, we have the following short exact sequence (and only for those $[K']$ such that the associated set $\{s_1,s_2\}$ is not a singleton):
    \begin{equation}
        0\rightarrow E'_1\oplus E'_2 \rightarrow E_8'\rightarrow K'\rightarrow 0.\label{K' ses}
    \end{equation}
    This shows that for a general $[K']\in M_{S'}(2,h',7)$, we have $\Hom(E_8',K')\neq0$. By semicontinuity, $\Hom(E_8',K')$
    $\neq 0$ for any $[K']\in M_{S'}(2,h',7)$ (this is the essential point of the whole argument presented here).

    Now we pick any $[K']\in M_{S'}(2,h',7)$ and any nonzero map $E_8'\rightarrow K'$. By a similar argument as in \cref{generic injective}, we can show that the image of $E_8'\rightarrow K'$ is a rank 2 subsheaf $K_0'\subset K'$, with first Chern class $h'$. Since $K_0'$ and $K'$ only differ at finitely many points, we can easily see that $\mathcal{H}^1(p_{1*}(\shE^*\otimes p_2^*K'))$ has zero dimensional support if we can prove $\mathcal{H}^1(p_{1*}(\shE^*\otimes p_2^*K_0'))$ has zero dimensional support.
    We have the short exact sequence 
    $$0\rightarrow \Ker\rightarrow E_8'\rightarrow K_0'\rightarrow0.$$
    By the stability of $E_8'$, the sheaf $\Ker$ must be slope semistable with slope $\frac{1}{3}(h',h')$. Using again that $p_{1*}(\shE^*\otimes p_2^*E_8')=\sho_S[0]$, we only need to prove that the support of $\mathcal{H}^2(p_{1*}(\shE^*\otimes p_2^*\Ker))$ is zero dimensional. We invoke the cohomology and base change theorem and look for those $[E']\in M_{S'}(3,h',5)$ such that $\Hom(\Ker, E')\neq 0$. If there is a nonzero map $\Ker\rightarrow E'$, then by the slope stability of $E'$, this map must be surjective outside finitely many points and the kernel is a rank 3 slope semistable (and thus slope stable) sheaf with Mukai vector $(3,h',t)$, with $t\geq 5$. Because the self pairing of the Mukai vector of a slope stable sheaf should be at least $-2$, again we must have $t=5$. Consequently we know that this rank 3 slope stable sheaf must represent a point in $M_{S'}(3,h',5)=S$ and the map $\Ker\rightarrow E'$ is surjective. Then it is easy to see that the support of $\mathcal{H}^2(p_{1*}(\shE^*\otimes p_2^*\Ker))$ consists of at most two points.
\end{proof}
    \begin{thm}
        There exists an isomorphism from $M_{S'}(2,h',7)$ to $S^{[2]}=\mathrm{Hilb}^2(M_{S'}(3,h',5))$, which is compatible with the embeddings of $X'$. More explicitly, if the image of $[K']\in M_{S'}(2,h',7)$ in $S^{[2]}$ is a length two subscheme $Z\subset S$, then all these data fit into a short exact sequence: \label{maintheorem}
        \begin{equation}
            0\rightarrow p_{2*}(\shE\otimes p_1^*(\sho_Z))\rightarrow E_8'\rightarrow K'\rightarrow0. \label{2h7_ses}
        \end{equation}
    \end{thm}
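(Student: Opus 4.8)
The plan is to exhibit the map in the direction $M_{S'}(2,h',7)\to S^{[2]}$, read off the short exact sequence (\ref{2h7_ses}) directly from the Claim, and then promote the map to an isomorphism by proving it is birational and invoking the fact that $S^{[2]}$ is a minimal model. First I would promote the set-theoretic assignment $[K']\mapsto Z$ furnished by the Claim to an honest morphism $\Psi\colon M_{S'}(2,h',7)\to S^{[2]}$. Working over $S\times S'\times M_{S'}(2,h',7)$ with the universal sheaf $\shk'$ and the relative version of $p_{1*}(\shE^*\otimes p_2^*(-))$, the Claim guarantees that on every fiber the first cohomology sheaf is $\sho_Z$ for a length two subscheme $Z\subset S$; since the length is constantly $2$ and the support is everywhere zero-dimensional, this is a flat family of length two subschemes, and cohomology and base change together with the universal property of the Hilbert scheme yield the morphism $\Psi$ defined on all of $M_{S'}(2,h',7)$.

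Next I would establish (\ref{2h7_ses}). By the Claim, $p_{1*}(\shE^*\otimes p_2^*K')$ has $\mathcal{H}^0=\sho_S$ and $\mathcal{H}^1=\sho_Z$, so it fits in an exact triangle $\sho_S\to p_{1*}(\shE^*\otimes p_2^*K')\to \sho_Z[-1]\to \sho_S[1]$. Applying the equivalence $\phi_\shE$ and using $\phi_\shE(\sho_S)=E_8'[-2]$, the fact that $p_{1*}(\shE^*\otimes p_2^*(-))$ equals $\phi_\shE^{-1}$ up to the shift $[-2]$ (so the middle term goes to $K'[-2]$), and that $\phi_\shE(\sho_Z)=p_{2*}(\shE\otimes p_1^*\sho_Z)$ is concentrated in degree $0$, I obtain after shifting by $[2]$ a triangle $p_{2*}(\shE\otimes p_1^*\sho_Z)\to E_8'\to K'\to p_{2*}(\shE\otimes p_1^*\sho_Z)[1]$. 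Passing to cohomology sheaves (all three terms being genuine sheaves, with ranks $6$, $8$, $2$ adding up correctly) gives exactly the short exact sequence $0\to p_{2*}(\shE\otimes p_1^*\sho_Z)\to E_8'\to K'\to 0$; this simultaneously extends (\ref{K' ses}) to arbitrary, possibly nonreduced, $Z$.

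Finally I would show $\Psi$ is an isomorphism. Over the open locus of reduced $Z=\{s_1,s_2\}$, set $E_i'=\phi_\shE(k(s_i))$, the stable bundles of Mukai vector $(3,h',5)$. Using the equivalence $\phi_\shE$ together with Serre duality on $S$ I get $\Hom(E_i',E_8')=\Hom(k(s_i),\sho_S[2])=\Ext^2(k(s_i),\sho_S)=\Hom(\sho_S,k(s_i))^*=\bc$, so the injection $E_1'\oplus E_2'\hookrightarrow E_8'$ appearing in (\ref{K' ses}) is unique up to independently scaling the two summands, and hence its cokernel $K'$ is determined by $Z$ alone. Thus $\Psi$ is injective over a dense open subset, hence generically injective; its image, being closed, irreducible and of dimension four, is all of $S^{[2]}$, and in characteristic zero a generically injective surjective morphism between irreducible varieties is birational. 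Since $M_{S'}(2,h',7)$ and $S^{[2]}$ are both smooth projective hyper-K\"ahler fourfolds, their canonical classes are trivial; writing $K_{M_{S'}(2,h',7)}=\Psi^*K_{S^{[2]}}+\sum a_iE_i$ with $a_i>0$ for each $\Psi$-exceptional prime divisor forces $\sum a_iE_i=0$, so $\Psi$ contracts no divisor, and a small birational morphism onto the smooth (hence $\bq$-factorial) target $S^{[2]}$ is an isomorphism. Compatibility with the two embeddings of $X'$ is immediate from \cref{F' locally free}, \cref{generic injective} and the matching short exact sequences.

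The main obstacle is the birationality step, i.e. controlling the generic fiber of $\Psi$. This rests on the one-dimensionality of $\Hom(E_i',E_8')$ and on the assertion that the resulting map $E_1'\oplus E_2'\to E_8'$ is not merely generically injective but injective with torsion-free, stable cokernel of Mukai vector $(2,h',7)$; establishing this stability (rather than just a count of ranks) is the delicate point, after which the discrepancy/minimal-model argument upgrades birationality to an isomorphism with no further work.
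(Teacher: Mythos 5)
Your proposal is correct, and its first two steps (the morphism $\Psi\colon M_{S'}(2,h',7)\to S^{[2]}$ from the Claim, and the derivation of (\ref{2h7_ses}) from the triangle $\sho_S\to p_{1*}(\shE^*\otimes p_2^*K')\to\sho_Z[-1]$ via $\phi_\shE$) match the paper. Where you genuinely diverge is in upgrading $\Psi$ to an isomorphism. The paper constructs the inverse rational map explicitly: for a general pair $(E_1',E_2')$ it shows that the cokernel of $E_1'\oplus E_2'\hookrightarrow E_8'$ is a \emph{stable} sheaf of Mukai vector $(2,h',7)$, reducing stability to the vanishing $H^0(S',K'(-h'))=0$, which rests on \cref{to be refined}; once birationality is established it invokes the genus-16-specific facts that any hyper-K\"ahler fourfold birational to $S^{[2]}$ is isomorphic to it (nef cone equals movable cone) and that $S^{[2]}$ admits no nontrivial birational self-map. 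You instead obtain birationality from uniqueness of the presentation (\ref{K' ses}): since $\Hom(E_i',E_8')\cong\bc$, the fibre of $\Psi$ over a reduced $Z$ is a single point, so $\Psi$ is generically injective and (by a dimension count on its closed irreducible image) surjective; you then finish with the standard discrepancy argument that a projective birational morphism between smooth projective varieties with trivial canonical bundle contracts no divisor, and that a small birational morphism onto a $\bq$-factorial target is an isomorphism. Your endgame is more elementary and self-contained, using nothing specific to genus 16 and bypassing the stability computation for the cokernel entirely; the paper's explicit inverse construction, on the other hand, produces the subbundle inclusion $E_1'\oplus E_2'\hookrightarrow E_8'$ with stable locally free quotient, which is reused later (e.g.\ in \cref{one dimensional E_8 to F} and the cohomological corollaries about $E_8'$). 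Both routes are sound; the only points to make explicit in yours are the flatness of the family of length-two subschemes over the (reduced, smooth) moduli space, which follows from constancy of the length together with the vanishing of $R^2$, and the fact that every $[K']$ lying over a reduced $Z$ actually admits the presentation (\ref{K' ses}), which is exactly what the final paragraph of the Claim's proof provides.
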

    \begin{proof}
        The regular morphism from $M_{S'}(2,h',7)$ to $S^{[2]}=\mathrm{Hilb}^2(M_{S'}(3,h',5))$ has been constructed above. We first construct a rational map from $S^{[2]}$ to $M_{S'}(2,h',7)$, which is the inverse of the previous regular morphism. Let $s_1=[E'_1],s_2=[E'_2]$ be two distinct points on $S$. They correspond to two slope stable vector bundles $E_1',E_2'$ with Mukai vector $(3,h',5)$. We have seen that $\dim(\Hom(E_i',E_8')),i=1,2$ is always equal to 1 and any nonzero map $E_i'\rightarrow E_8', \ i=1,2$  must be injective by slope stability. Thus we can naturally view $E_i'$ as subsheaves of $E_8'$. By the construction before the statement of the theorem, when the pair $E_1',E_2'$ is general, we have the subbundle inclusion $E_1'\oplus E_2'\hookrightarrow E_8'$ and the cokernel is a bundle $K'$ of Mukai vector $(2,h',7)$.
        
        We only need to prove that the vector bundle $K'$ is slope stable. For this, it is enough to show that $H^0(S',K'(-h'))=0$. We have $H^0(S',E_8'(-h'))=0$ by slope stability of $E_8'$. It follows form the following Lemma that $H^1(S',E_1'(-h'))=0$ for general $s_1\in S$. This gives the construction of the inverse rational map.

        Finally, recall again that any hyper-K\"ahler manifold birational to $S^{[2]}$, where $(S,h)$ is a general genus 16 polarized surface, must be actually isomorphic to $S^{[2]}$. Moreover, by \cite[Theorem 4.15]{debarre2020hyperkahlermanifolds}, there is no nontrivial birational map from $S^{[2]}$ to itself. This shows that our regular morphism from $M_{S'}(2,h',7)$ to $S^{[2]}$ is an isomorphism.
    \end{proof}

\begin{lemma}
$H^1(S,E(-h))=0$, for a general rank 3 slope stable vector bundle $E$ with Mukai vector $(3,h,5)$ over a general genus 16 polarized K3 surface.\label{to be refined}
\end{lemma}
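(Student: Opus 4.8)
The plan is to reduce the vanishing to the (semi)stability of one auxiliary sheaf. First I would record Mukai vectors: $\mukai(E(-h))=(3,-2h,20)$, so $\chi(S,E(-h))=23$, while $\mu(E(-h))=-20<0$ forces $H^0(S,E(-h))=0$ by slope stability. Since $E$ has rank $3$ with $\det E=\sho(h)$, there is a canonical identification $E^*(h)\cong\wedge^2E$; hence Serre duality gives $H^2(S,E(-h))\cong H^0(S,\wedge^2E)$ and $H^1(S,E(-h))\cong H^1(S,\wedge^2E)^*$. Riemann--Roch then yields $h^1(S,E(-h))=h^0(S,\wedge^2E)-23$, so the lemma is exactly the statement that $h^0(S,\wedge^2E)=\Hom(E,\sho(h))$ attains its minimal (expected) value $23$.

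Next I would turn this into a condition on a cokernel. By \cref{ext1=0} we have $\Hom(E,F)=\bc^4$; writing $V=\Hom(E,F)^*$, the evaluation $\ev\colon E\to F\otimes V$ is the $4$-Kronecker representation attached to $E$. For general $[E]\in M_S(3,h,5)$ this map should be injective with torsion-free cokernel $C$, giving $0\to E\to F\otimes V\to C\to 0$ with $\mukai(C)=(5,3h,27)$. The rank-two bundle $F$ is $h$-selfdual, $F^*\cong F(-h)$, so $H^0(S,F(-h))=\Hom(F,\sho)=0$ and, by \cref{h^1(F)=0} together with Serre duality, $H^1(S,F(-h))\cong H^1(S,F)^*=0$. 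Twisting the short exact sequence by $-h$ and taking cohomology, the two flanking terms $H^0(S,F(-h))\otimes V$ and $H^1(S,F(-h))\otimes V$ vanish, whence $H^1(S,E(-h))\cong H^0(S,C(-h))=\Hom(\sho(h),C)$.

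The point of this reformulation is that the last group vanishes for purely formal reasons once $C$ is well behaved: if $C$ is torsion-free and $\mu$-semistable, then any nonzero map $\sho(h)\to C$ has image isomorphic to $\sho(h)$ (a nonzero torsion-free rank-one quotient of a line bundle is that line bundle), producing a subsheaf of $C$ of slope $30>\mu(C)=18$ and contradicting semistability. Hence $\Hom(\sho(h),C)=0$ and the lemma follows. In this way everything is reduced to showing that the cokernel $C$ of the evaluation is torsion-free and $\mu$-semistable for general $[E]\in M_S(3,h,5)$.

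The hard part is precisely this (semi)stability of $C$. I would try to obtain it from Mukai's quiver model: $C$ is the restriction to $S$ of the cokernel bundle $\mathscr{C}=\mathrm{coker}(\mathscr{E}\to\mathcal{F}\otimes V)$ on the $12$-dimensional moduli space $\mathcal{T}$, so it would be enough to prove that $\mathscr{C}$ is semistable on $\mathcal{T}$ and then invoke a restriction theorem of Mehta--Ramanathan/Flenner type for the general complete intersection $S\hookrightarrow\mathcal{T}$ cut out by the section of $\mathscr{E}^{\oplus2}\oplus\mathcal{F}^{\oplus2}$ associated to a general $E$. A more intrinsic alternative is to note that $\mukai(C)^2=0$ and $\mukai(C)$ is primitive, so $C$ determines a point of a two-dimensional moduli space $M_S(5,3h,27)$ --- necessarily a Fourier--Mukai partner of $S$ --- and to argue that $[E]\mapsto[C]$ meets its stable locus for general $E$. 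If a clean conceptual argument proves elusive, one can fall back on the fact that $h^0(S,\wedge^2E)=23$ is a Zariski-open condition on the irreducible moduli of pairs $(S,E)$, so it suffices to check it (equivalently, the semistability of $C$) in a single explicit example on Mukai's model by computer.
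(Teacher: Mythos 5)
Your formal reductions are all correct: $h^1(S,E(-h))=h^0(S,\wedge^2E)-23$, the identification $H^1(S,E(-h))\cong H^0(S,C(-h))$ via the evaluation sequence $0\to E\to F\otimes V\to C\to 0$ (using $F^*\cong F(-h)$ and \cref{h^1(F)=0}), and the observation that $\Hom(\sho(h),C)=0$ would follow from torsion-freeness and $\mu$-semistability of $C$ since $30>\mu(C)=18$. But the proof is not complete: everything is deferred to the semistability of $C$, and none of your three routes to it is carried out. The Mehta--Ramanathan/Flenner route is the most problematic: those restriction theorems apply to general divisors in high multiples of an ample class (or complete intersections thereof), whereas $S\subset\mathcal{T}$ is the zero locus of a section of $\mathscr{E}^{\oplus2}\oplus\mathcal{F}^{\oplus2}$ in fixed degree~$1$, so they do not apply as stated. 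The Fourier--Mukai route ($\mukai(C)^2=0$, so $C$ would live on a two-dimensional moduli space) presupposes the semistability you are trying to prove before you can place $[C]$ in that moduli space, and in any case still reduces to exhibiting one good $E$. Your final fallback --- openness plus a single example --- is the right idea but the example is never actually checked, so the core of the argument is missing.

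For comparison, the paper's proof goes straight to that last step and does the one-example check by hand rather than by computer: since $h^1=0$ is open and the relevant moduli are irreducible, it suffices to treat Mukai's explicit model, where $E=\mathscr{E}|_S$ admits a resolution by direct sums of line bundles (sequence (21) of \cite{Mukai16}). Twisting that sequence by $\sho_S(-h)$ gives
$$0\rightarrow \sho_S(-a_1-b)\oplus\sho(-a_2-b)\rightarrow E(-h)\rightarrow\sho(-a_1-a_2)\rightarrow0,$$
and since $\sho(a_1+b),\sho(a_2+b),\sho(a_1+a_2)$ are nef and big, their higher cohomology vanishes; Serre duality then kills $H^1$ and $H^2$ of the outer terms and hence $H^1(S,E(-h))$. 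If you want to salvage your approach, the cleanest fix is to verify semistability of $C$ (equivalently $h^0(\wedge^2E)=23$) directly on this same explicit model using the line-bundle resolution, which is exactly the computation the paper performs in a slightly different guise.
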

\begin{proof}
    We only need to show the vanishing for one single example. For this, we come back to Mukai's original paper \cite{Mukai16}, where $E$ is taken to be $\mathcal{E}|_S$ in his notation. We tensor the short exact sequence (21) in \cite{Mukai16} with $\sho_S(-h)$ and get 
    $$0\rightarrow \sho_S(-a_1-b)\oplus\sho(-a_2-b)\rightarrow E(-h)\rightarrow\sho(-a_1-a_2)\rightarrow0.
    $$
    Notice that $\sho(a_1+b),\sho(a_2+b),\sho(a_1+a_2)$ are all nef and big line bundles, hence have zero higher cohomology. Now Serre duality and the above short exact sequence give us what we want.
\end{proof}
\begin{Corollary}
    For any $[K']\in M_{S'}(2,h',7)$, $\dim(\Hom(E_8',K'))=1$. In particular, any nonzero map $E_8'\rightarrow K'$ is surjective. \label{one dimensional E_8 to F}
\end{Corollary}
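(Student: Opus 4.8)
The plan is to read the corollary off from the short exact sequence of \cref{maintheorem} by transporting the relevant groups to $S$ via the equivalence $\phi_\shE$. The key identity is
\[
\phi_\shE(\sho_S[2]) = E_8',
\]
which is just a shift of $\phi_\shE(\sho_S) = E_8'[-2]$. Hence for any $A\in D^b(S')$ the functor $\Hom_{D^b(S')}(E_8',-)$ (resp.\ $\Ext^1_{D^b(S')}(E_8',-)$) is computed on $S$ as $\Hom_{D^b(S)}(\sho_S[2], \phi_\shE^{-1}(-))$ (resp.\ the same group shifted by one).

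First I would apply $\Hom(E_8',-)$ to the sequence
\[
0\rightarrow \phi_\shE(\sho_Z)\rightarrow E_8'\rightarrow K'\rightarrow 0
\]
from \cref{maintheorem}, where $Z\subset S$ is the length two subscheme associated to $[K']$ and $\phi_\shE(\sho_Z) = p_{2*}(\shE\otimes p_1^*\sho_Z)$. Since $\phi_\shE$ is an equivalence, $\phi_\shE^{-1}(\phi_\shE(\sho_Z)) = \sho_Z$ is a sheaf supported in dimension $0$, so it has no cohomology in negative degrees; therefore the two flanking terms of the long exact sequence vanish:
\[
\Hom(E_8',\phi_\shE(\sho_Z)) \cong \Hom_{D^b(S)}(\sho_S[2],\sho_Z) = H^{-2}(S,\sho_Z) = 0,
\]
\[
\Ext^1(E_8',\phi_\shE(\sho_Z)) \cong \Hom_{D^b(S)}(\sho_S[2],\sho_Z[1]) = H^{-1}(S,\sho_Z) = 0.
\]

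Next I would read off what remains of the long exact sequence, namely $0\rightarrow \Hom(E_8',E_8')\rightarrow \Hom(E_8',K')\rightarrow 0$, and compute $\Hom(E_8',E_8') \cong \Hom_{D^b(S)}(\sho_S,\sho_S) = H^0(S,\sho_S) = \bc$ since $S$ is connected. This gives $\dim\Hom(E_8',K') = 1$. For the final assertion, the quotient map $E_8'\rightarrow E_8'/\phi_\shE(\sho_Z) = K'$ appearing in \cref{maintheorem} is a nonzero element of this one–dimensional space, so every nonzero map $E_8'\rightarrow K'$ is a nonzero scalar multiple of it, hence surjective.

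I do not expect a genuine obstacle: the substance of the statement is \cref{maintheorem} (equivalently the Claim proved earlier in this section, now valid for every $K'$), and the rest is a formal manipulation with $\phi_\shE$; the only points needing care are the shift bookkeeping behind $\phi_\shE(\sho_S[2]) = E_8'$ and the elementary vanishing $H^{-1}(S,\sho_Z) = H^{-2}(S,\sho_Z) = 0$. If one prefers an argument not invoking \cref{maintheorem}, the same computation runs against the canonical truncation triangle $\sho_S[2]\rightarrow \phi_\shE^{-1}(K')\rightarrow \sho_Z[1]\xrightarrow{+1}\sho_S[3]$, whose existence is precisely the Claim that $\mathcal H^0$ and $\mathcal H^1$ of $p_{1*}(\shE^*\otimes p_2^*K')$ are $\sho_S$ and $\sho_Z$; applying $\Hom_{D^b(S)}(\sho_S[2],-)$ to it produces the identical collapse to $\Hom(E_8',K')\cong\bc$.
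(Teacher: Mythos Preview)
Your proof is correct and follows essentially the same route as the paper: both apply $\Hom(E_8',-)$ to the short exact sequence of \cref{maintheorem} and use the Fourier--Mukai equivalence to obtain the needed vanishings. The only cosmetic difference is that the paper argues $\Hom(E_8',\phi_\shE(\sho_Z))=0$ via slope stability on $S'$ and $\Ext^1(E_8',E')=0$ via $p_{1*}(\shE^*\otimes p_2^*E_8')=\sho_S[0]$, whereas you transport both groups uniformly to $S$ and read off $H^{-2}(\sho_Z)=H^{-1}(\sho_Z)=0$.
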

\begin{proof}
   We apply $\Hom(E_8',-)$ to the short exact sequence (\ref{2h7_ses}). Note that $\Hom(E_8',p_{2*}(\shE\otimes p_1^*(\sho_Z)))=0$ by slope stability. Moreover, $\Ext^1(E_8',E')=0$ for any $[E']\in M_{S'}(3,h',5)$ because $p_{1*}(\shE^*\otimes p_2^*E_8')=\sho_S[0]$, which shows that $\Ext^1(E_8',p_{2*}(\shE\otimes p_1^*(\sho_Z)))=0$.
\end{proof}

\begin{remark} 
    Applying $\Hom(-,F')$ to the short exact sequence (\ref{K' ses}), we get $0\rightarrow\Hom(K',F')\rightarrow V_8\rightarrow G_{s_1}\oplus G_{s_2}$, where the latter nonzero map is exactly a fiber of the map $V_8\rightarrow G_4^{[2]}$ on $S^{[2]}$. We can thus recover the fact that the image of $X'$ inside $M_{S'}(2,h',7)$ consists of those $[K']$ such that $\Hom(K',F')\neq 0$.
\end{remark}

\begin{remark}
    The existence of a non-explicit isomorphism between $S^{[2]}$ and $M_{S'}(2,h',7)$ was already proven in \cite[Theorem 3.3]{KapustkaVanGeemen}.
\end{remark}
The above theorem allows us to prove many facts about the rank 8 vector bundle $E_8'$.
\begin{Corollary}
    $H^1(S',E_8')=H^2(S',E_8')=0$, hence $\dim(H^0(S',E_8'))=\chi(E_8')=25$.
\end{Corollary}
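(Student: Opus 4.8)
The plan is to reduce the statement to two cohomology vanishings and a Riemann--Roch count. By Riemann--Roch on the K3 surface $S'$, $\chi(S',E_8')$ equals the rank plus the last coordinate of the Mukai vector of $E_8'$, that is $8+17=25$; so it suffices to prove $H^1(S',E_8')=H^2(S',E_8')=0$.

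\textbf{Vanishing of $H^2$.} This is immediate from stability. By Serre duality $H^2(S',E_8')\cong\Hom(E_8',\sho_{S'})^*$, and a nonzero homomorphism $E_8'\to\sho_{S'}$ would have image a nonzero quotient of $E_8'$ of rank $\le 1$. Since $E_8'$ is $h'$-slope stable with $\mu(E_8')>0$ (its first Chern class being a positive multiple of $h'$), every nonzero quotient of $E_8'$ has slope $>\mu(E_8')>0$, whereas every subsheaf of $\sho_{S'}$ has slope $\le 0$; contradiction, so $H^2(S',E_8')=0$.

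\textbf{Vanishing of $H^1$.} Here I would feed the explicit isomorphism of \cref{maintheorem} into a long exact sequence. Pick $[K']\in M_{S'}(2,h',7)$ general: then $K'$ is locally free and slope stable (this is exactly what is checked in the proof of \cref{maintheorem}, and the non-locally-free locus is the divisor $X'$), the associated length-two subscheme of $S$ is reduced, and (\ref{K' ses}) is a genuine short exact sequence of sheaves
$$0\rightarrow E_1'\oplus E_2'\rightarrow E_8'\rightarrow K'\rightarrow 0,\qquad [E_1'],[E_2']\in M_{S'}(3,h',5).$$
The long exact cohomology sequence forces $H^1(S',E_8')=0$ once $H^1(S',E_i')=0$ and $H^1(S',K')=0$. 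The first vanishing is \cref{vanishing E}. The second is obtained by the argument of \cref{h^1(F)=0}, which --- as remarked there --- applies verbatim to any locally free stable sheaf with Mukai vector of the form $(2,\bullet,7)$: a nonzero element of $\Ext^1(K',\sho_{S'})\cong H^1(S',K')^*$ yields a nontrivial extension $0\to\sho_{S'}\to B\to K'\to 0$ whose middle term $B$ is slope stable by the same argument as in \cref{h^1(F)=0}, yet $v(B)=(3,h',8)$ has $v(B)^2=(h',h')-48=-18<-2$, which is impossible for a slope-stable sheaf. Hence $H^1(S',E_8')=0$, and therefore $h^0(S',E_8')=\chi(S',E_8')=25$.

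\textbf{Main obstacle.} The only genuinely non-formal input is the vanishing $H^1(S',K')=0$ for the generic member $K'$. The delicate points are to work with a $[K']$ off the divisor $X'$ of non-locally-free sheaves (so that the extension argument of \cref{h^1(F)=0} is even available) and to confirm that the slope-stability argument for the extension $B$ transfers without change to the Mukai vector $(2,h',7)$; once those are secured, everything else is bookkeeping with the long exact sequence.
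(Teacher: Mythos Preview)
Your proof is correct and follows essentially the same route as the paper: both feed the short exact sequence (\ref{K' ses}) into cohomology, use $H^i(S',E_j')=0$ for $i>0$ from \cref{vanishing E}, and reduce to the vanishing of $H^1(S',K')$ for some $[K']\in M_{S'}(2,h',7)$. The only difference lies in how that last vanishing is checked. The paper chooses a $[K']$ \emph{in} the image of $X'$ (hence non-locally-free) and reads $H^1(S',K')=H^2(S',K')=0$ off the sequence (\ref{X'_ses}) together with the vanishing for $F'$ in \cref{h^1(F)=0}; you instead choose a locally free $[K']$ \emph{off} $X'$ and run the extension argument of \cref{h^1(F)=0} directly for Mukai vector $(2,h',7)$---exactly the use that the proof of \cref{h^1(F)=0} announces in advance. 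Your separate handling of $H^2(S',E_8')$ via slope stability and Serre duality is a small streamlining: the paper gets $H^2$ from the same long exact sequence, which is why it also needs $H^2(S',K')=0$.
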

\begin{proof}
    Recall that $\phi_\shE(\sho_S)=E_8'[-2]$, which implies that $H^1(S,E)=H^2(S,E)=0$ for any $[E]\in M_{S}(3,h,5)$. Similar vanishing of cohomology holds for any $[E']\in M_{S'}(3,h',5)$. In view of the short exact sequence (\ref{K' ses}), we only need to prove that $H^1(S',K')=H^2(S',K')=0$ holds for one (hence general) $[K']\in M_{S'}(2,h',7)$. This can be verified by taking $[K']$ to be in the image of $X'\rightarrow M_{S'}(2,h',7)$ and using the short exact sequence (\ref{X'_ses}) together with \cref{h^1(F)=0}.
\end{proof}
We also have a refinement of \cref{to be refined}:
\begin{Corollary}
    $H^1(S',E_8'(-h'))=0$. Moreover, $H^1(S',E'(-h'))=0$ for any $[E']\in M_{S'}(3,h',5)$ over a general genus 16 polarized K3 surface.\label{refined}
\end{Corollary}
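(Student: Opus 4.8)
The plan is to bootstrap from \cref{to be refined} by feeding it into the short exact sequence (\ref{K' ses}) coming from the isomorphism \cref{maintheorem}. First I would treat $E_8'(-h')$: choose two general distinct points $s_1,s_2\in S=M_{S'}(3,h',5)$, general enough that the stable sheaf $K'\in M_{S'}(2,h',7)$ they determine lies in the good loci of both \cref{to be refined} and the Corollary just above. Then $H^1(S',E'_{s_i}(-h'))=0$ for $i=1,2$ (apply \cref{to be refined} on $S'$, the roles of $S$ and $S'$ being symmetric), and $K'$ is locally free with $H^1(S',K')=0$; the latter yields $H^1(S',K'(-h'))=0$ as well, because $K'(-h')\cong (K')^*$ for a rank two bundle with $\det=\sho_{S'}(h')$, and $H^1(S',(K')^*)\cong H^1(S',K')^*$ by Serre duality on the K3 surface $S'$. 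Twisting (\ref{K' ses}) by $\sho_{S'}(-h')$ and reading off the long exact cohomology sequence, $H^1(S',E_8'(-h'))$ sits between $H^1(S',E'_{s_1}(-h'))\oplus H^1(S',E'_{s_2}(-h'))=0$ and $H^1(S',K'(-h'))=0$, hence vanishes.

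Next I would upgrade \cref{to be refined} to an arbitrary $[E']\in M_{S'}(3,h',5)$. Fix any $[E']$ and pick a general point $s_2\neq[E']$ of $S$ with $H^1(S',E'_{s_2}(-h'))=0$ — a dense open condition by semicontinuity and \cref{to be refined}. By \cref{maintheorem} the reduced subscheme $Z=\{[E'],s_2\}\subset S$ corresponds to a stable $K'\in M_{S'}(2,h',7)$ fitting into $0\to E'\oplus E'_{s_2}\to E_8'\to K'\to 0$, where I use $p_{2*}(\shE\otimes p_1^*\sho_Z)=E'\oplus E'_{s_2}$ for reduced $Z$ (a consequence of $\phi_\shE(k([E']))=E'$, i.e.\ the fibre of the universal bundle $\shE$ over the point $[E']$ of $M_{S'}(3,h',5)$ being $E'$ itself). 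Twisting by $\sho_{S'}(-h')$ and taking cohomology gives
\[
H^0(S',K'(-h'))\longrightarrow H^1(S',E'(-h'))\oplus H^1(S',E'_{s_2}(-h'))\longrightarrow H^1(S',E_8'(-h'))=0,
\]
the last term vanishing by the first step. Since $H^1(S',E'_{s_2}(-h'))=0$ by the choice of $s_2$, it remains only to see that $H^0(S',K'(-h'))=0$: if $K'$ is locally free this is $\Hom(K',\sho_{S'})=0$ by slope stability, while if $K'$ is not locally free then $K'\subset (K')^{**}=F'$ and $H^0(S',K'(-h'))\hookrightarrow H^0(S',F'(-h'))=0$ because $F'(-h')$ is slope stable of negative slope. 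Therefore $H^1(S',E'(-h'))=0$.

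The only mildly delicate points are the identification $p_{2*}(\shE\otimes p_1^*\sho_Z)=E'\oplus E'_{s_2}$ in the reduced case and arranging the genericity in the first step so as to hit the good loci of the earlier results simultaneously; I expect neither to be a genuine obstacle, and everything else is a routine chase through the long exact cohomology sequences.
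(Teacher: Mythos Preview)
Your argument is correct and follows essentially the same two-step strategy as the paper: first obtain $H^1(S',E_8'(-h'))=0$ from the twisted sequence (\ref{K' ses}) with a well-chosen $K'$, then feed this back into (\ref{2h7_ses}) for an arbitrary $[E']$. The only differences are cosmetic: the paper obtains $H^1(S',K'(-h'))=0$ for locally free $K'$ by invoking \cref{F' locally free} (ultimately the extension argument of \cref{h^1(F)=0}), whereas you deduce it from $H^1(S',K')=0$ via Serre duality; and in the second step your genericity condition on $s_2$ and the case split for $H^0(S',K'(-h'))=0$ are both unnecessary, since $H^0(S',K'(-h'))=0$ already forces $H^1(S',E'(-h'))\oplus H^1(S',E'_{s_2}(-h'))=0$, and slope stability of $K'$ (preserved under twists) gives that vanishing directly.
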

\begin{proof}
    We tensor the short exact sequence (\ref{K' ses}) with $\sho_{S'}(-h')$. By \cref{to be refined}, we can pick $E_1',E_2'$ such that $H^1(E_1'(-h'))=H^1(E_2'(-h'))=0$. In order to prove that $H^1(S',E_8'(-h'))=0$, we only need to show that $H^1(S',K'(-h'))=0$ for a (general) $[K']\in M_{S'}(2,h',7)$, which follows from \cref{F' locally free}.

    Now we have shown that $H^1(S',E_8'(-h'))=0$. By stability, $H^0(S',K'(-h'))=0$. Consequently, $H^1(S', p_{2*}(\shE\otimes p_1^*(\sho_Z))(-h'))=0$ by (\ref{2h7_ses}). Again, $H^0(S',E'(-h'))=0$ by stability for any $[E']\in M_{S'}(3,h',5)$. The combination of these facts with (\ref{K' ses}) shows that $H^1(S',E'(-h'))=0$.
\end{proof}

\section{Back to Mukai's model}
Another interesting geometric aspect of Mukai's model is that it exhibits (birationally) a general genus 16 polarized K3 surface $S$ as a two dimensional family of twisted cubics in $\mathds{P}(V^*)$.
We will give explanations of this aspect in this section. Moreover, we will also prove that every $[E]\in M_S(3,h,5)$ (rather than only general ones) fits into Mukai's model.

Recall from Section 2 that $\mathcal{T}$ is the moduli space of stable representations of the 4-Kronecker quiver with dimension vector $(3,2)$. The variety $\mathcal{T}$ is actually birational to the irreducible component of the Hilbert scheme of $\bp^3$ containing twisted cubics (see \cite{twistedcubic}). More explicitly, we consider the map $\mathscr{E}\boxtimes\sho_{\bp(V^*)}\rightarrow \mathcal{F}\boxtimes \sho_{\bp(V^*)}(1)$ over $\mathcal{T}\times \bp(V^*)$. The degeneracy locus over a general slice $\{p\}\times \bp(V^*)$ is a twisted cubic contained in $\bp(V^*)$.

Pick a point $s'\in S'=M_{S}(3,h,5)$, which corresponds to a rank 3 vector bundle $E$ over $S$. By definition, we know that $\Hom(E,F)=G'_{4,s'}$, where we use $G'_{4,s'}$ to denote the fiber of $G_4'$ over $s'$. We have $V^*:= G'_{4,s'}$ as in Mukai's paper \cite{Mukai16}. When $s'\in S'$ is general, we know that the map $E\rightarrow F\otimes V$ is a family of stable representations of the 4-Kronecker quiver with dimension vector $(3,2)$. Then by the universal property of the 12 dimensional Moduli space $\mathcal{T}$ of stable representations of the 4-Kronecker quiver with dimension vector $(3,2)$, we have a map from $S$ to $\mathcal{T}$ which exhibits $S$ as (birationally) parameterizing two dimensional family of twisted cubics in $\mathds{P}(V^*)=\mathds{P}(G'_{4,s'})$.

In this section, we will explain what is this family of twisted cubics in $\mathds{P}(G'_{4,s'})=\bp(V^*)$. More precisely, we consider $\mathscr{C}_S\subset X\times \mathds{P}(\Hom(E,F))$, which is defined as follows:
$$\mathscr{C}_S=\{(s,f^*)=x\in X, \ \psi\in \bp(\Hom(E,F))| 
\ (\mathrm{rank}(\psi_s)\leq 1) \ \mathrm{Im}(\psi_s)\subset \mathrm{ker}(f^*: F_s\rightarrow\bc)\}.$$
Obviously $\mathscr{C}_S$ is birational to the 2-dimensional family of twisted cubics (when $[E]=s'\in S'$ is general). Moreover, $(x,\psi)\in X\times \bp(\Hom(E,F))$ belongs to $\mathscr{C}_S$ iff the map $\psi: E\rightarrow F$ factors through the sheaf $K$ corresponding to $x\in X\subset M_{S}(2,h,7)$.
\begin{lemma}
    For any $[E]\in M_S(3,h,5)$, the kernel of any nonzero map $E\rightarrow F$ is equal to $\sho_S$ and this map degenerates at four points of $S$ (counting multiplicity). \label{4 degenerate point}
\end{lemma}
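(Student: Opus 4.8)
The plan is to analyze a nonzero map $\phi\colon E\to F$ between slope stable bundles with Mukai vectors $(3,h,5)$ and $(2,h,8)$, following the same template as \cref{generic injective} and \cref{4 degenerate point}'s sibling lemmas. First I would identify the kernel. Since $E$ and $F$ are slope stable with $\mu(E)<\mu(F)$, any nonzero $\phi$ has image of rank exactly $2$: a rank $1$ image would contradict stability of $F$, and the image cannot have rank $3$ since $\mathrm{rank}(F)=2$. Hence $\ker(\phi)$ is a rank $1$ torsion free sheaf, i.e. an ideal sheaf twist $\mathcal{I}_W(D)$ for some divisor class $D$ and some zero-dimensional $W$. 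From $c_1(E)=c_1(F)=h$ and the short exact sequence $0\to\ker\phi\to E\to\mathrm{Im}(\phi)\to 0$, we get $c_1(\ker\phi)=c_1(E)-c_1(\mathrm{Im}(\phi))$; stability of $F$ forces $c_1(\mathrm{Im}(\phi))=h$ (it is a rank $2$ subsheaf of $F$ generically of full rank, so its saturation has $c_1=h$ and, since a proper subsheaf with $c_1=h$ would destabilize $F$, actually $\mathrm{Im}(\phi)$ has $c_1=h$). Therefore $c_1(\ker\phi)=0$, and since $\ker\phi$ is a reflexive (indeed rank one torsion free) subsheaf of the bundle $E$ with trivial determinant, it must be $\sho_S$ by \cite[Corollary 1.6.11]{HuyBook} (a destabilizing or equal-slope saturated rank one subsheaf of a slope stable bundle with $c_1=0$ is $\sho_S$); here one uses that $\ker\phi$, being saturated in $E$, is reflexive hence a line bundle, necessarily $\sho_S$.

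Next I would compute the degeneracy. Having $\ker\phi=\sho_S$, we get a short exact sequence $0\to\sho_S\to E\to \mathrm{Im}(\phi)\to 0$ with $\mathrm{Im}(\phi)$ a rank $2$ subsheaf of $F$ of first Chern class $h$. Write $0\to\mathrm{Im}(\phi)\to F\to T\to 0$ where $T$ is a torsion sheaf (since $\mathrm{Im}(\phi)$ and $F$ agree generically and $F$ is locally free, $T$ has zero-dimensional support — one must check $\mathrm{Im}(\phi)$ is not destabilized, so $T$ is genuinely torsion, not containing a rank $1$ piece). The length of $T$ is read off from Chern classes: $\mathrm{ch}_2(\mathrm{Im}(\phi)) = \mathrm{ch}_2(E) - \mathrm{ch}_2(\sho_S)$ and $\ell(T) = \mathrm{ch}_2(F) - \mathrm{ch}_2(\mathrm{Im}(\phi))$. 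Using $v(E)=(3,h,5)$, $v(F)=(2,h,8)$ and $v(\sho_S)=(1,0,1)$, together with $(h,h)=30$, a direct computation gives $\ell(T)=4$, so the map $\phi$ drops rank along a length-$4$ subscheme of $S$, which is the asserted count with multiplicity.

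The main obstacle I anticipate is the same subtle point that recurs in \cref{generic injective} and \cref{h^1(F)=0}: rigorously excluding the possibility that $\ker\phi$ or $\mathrm{Im}(\phi)$ fails to be saturated in the expected way, i.e. that the map could have a rank-one image or that $\mathrm{Im}(\phi)$ could have $c_1\neq h$. The clean way around this is to argue directly with slopes: if $\mathrm{rank}(\mathrm{Im}(\phi))=1$ then $\mu(\mathrm{Im}(\phi))\geq (h,h)$ as a subsheaf of $F$ (by slope stability $\mu(\mathrm{Im}(\phi))<\mu(F)=(h,h)/2$, contradiction since a rank one subsheaf of image of $E$ would have $c_1\le h$ hence slope $\le (h,h)$, but must actually be $< (h,h)/2$ — impossible as it is a quotient of $E$ whose minimal destabilizing slope is $\mu(E)=(h,h)/3 > 0$... wait, this needs care). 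The honest resolution, as in the cited lemmas, is: the composite $\sho_S\hookrightarrow E$ cannot exist as a subsheaf of positive slope, and $F$ cannot have a rank one subsheaf of slope $\ge (h,h)/2$; combining these via the stability of both bundles pins down $\mathrm{rank}(\mathrm{Im}(\phi))=2$ and $c_1(\mathrm{Im}(\phi))=h$. Once this is settled the Chern class bookkeeping is routine, so I expect the proof to be short, essentially "same as before," with the one genuinely new input being the arithmetic $\ell(T)=4$.
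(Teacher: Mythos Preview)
Your approach is correct and is essentially the same as the paper's: use slope stability of $E$ and $F$ to force the image to have rank $2$ and $c_1=h$, deduce that the kernel is a saturated (hence reflexive) rank-one sheaf with trivial $c_1$, hence $\sho_S$, and then read off the length of the cokernel from the Mukai vectors. The paper dispatches all of this in three sentences; your middle paragraph wanders (the aside beginning ``if $\mathrm{rank}(\mathrm{Im}(\phi))=1$ then $\mu(\mathrm{Im}(\phi))\geq (h,h)$\ldots'' is garbled), but you recover the clean slope argument at the end, and your Chern-class bookkeeping for $\ell(T)=4$ is correct.
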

\begin{proof}
    By the slope stability of $E,F$, we know that a nonzero map must be generically surjective with $c_1(\mathrm{Im}(E\rightarrow F))=h$. The kernel is then a reflexive sheaf of rank one with trivial first Chern class, which shows that the kernel is exactly $\sho_S$. A computation shows that the Mukai vector of the cokernel is equal to $(0,0,4)$.
\end{proof}

Recall that $\mathcal{C}'\subset S\times X'$ is the degeneracy locus of the first nonzero map in (\ref{Totalfamily}). By \cref{generic injective}, the intersection of $\mathcal{C}'$ with any slice $S\times \{x'\}$ is the length two subscheme on $S$ corresponding to the image of $x'\in X'\hookrightarrow S^{[2]}$, which shows that $\mathcal{C}'$ is three dimensional.
\begin{lemma}
    The fibers of the composition map $\mathcal{C}'\hookrightarrow S\times X'\rightarrow S$ are always one-dimensional.\label{bl_sS}
\end{lemma}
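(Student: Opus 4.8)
The plan is to describe each fibre of the projection $\rho\colon\mathcal{C}'\to S$ as (a finite model of) the zero scheme of a section of $\sho_S(h)$, and then to prove that this section never vanishes identically; the one-dimensionality of the fibres is immediate once the non-vanishing is known. Fix $s\in S$. By the fibrewise form of (\ref{Totalfamily}) and its dual (\ref{twopoints}), a point $x'\in X'$ lies over $s$ exactly when the distinguished section $v_{x'}\in V_8=H^0(S,G_4)$ vanishes at $s$; by \cref{generic injective} this $v_{x'}$ vanishes precisely on the length-two scheme $Z_{x'}=\{s_1,s_2\}$ that is the image of $x'$ under $X'\hookrightarrow S^{[2]}$, so $x'\in\rho^{-1}(s)$ iff $s\in Z_{x'}$. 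Now the self-dual sequence (\ref{G4G4dual}), $0\to G_4^*\to V_8\otimes\sho_S\to G_4\to 0$, shows that the $4$-dimensional space $K_s:=\ker(\mathrm{ev}_s\colon V_8\to G_{4,s})$ of sections vanishing at $s$ is exactly the fibre $G_4^*|_s$. Using the identification of $X'$ with the degeneracy locus of $V_8\otimes\sho_{S^{[2]}}\to G_4^{[2]}$ proved above, a reduced scheme $\{s,t\}$ lies on $X'$ iff some nonzero $v\in V_8$ vanishes on it, i.e. iff $\mathrm{ev}_t\colon K_s\to G_{4,t}$ is not an isomorphism. Hence, choosing a basis $v_1,\dots,v_4$ of $K_s$ and setting $\sigma_s:=v_1\wedge\cdots\wedge v_4\in H^0(S,\det G_4)=H^0(S,\sho_S(h))$, the map $x'\mapsto t$ (the second point of $Z_{x'}$) identifies $\rho^{-1}(s)$, away from the non-reduced locus $t=s$, with $Z(\sigma_s)\subset S$. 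Since $\sho_S(h)$ is ample, $Z(\sigma_s)$ is a curve whenever $\sigma_s\neq 0$, so it suffices to prove $\sigma_s\neq 0$ for every $s$.

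The heart of the argument is therefore the non-vanishing of $\sigma_s$. Suppose $\sigma_s\equiv 0$ for some $s$. Then the four sections spanning $K_s$ are everywhere linearly dependent in the fibres of $G_4$, so the image $\mathcal{G}$ of $K_s\otimes\sho_S\to G_4$ has rank $r\le 3$; let $\mathcal{G}^{\mathrm{sat}}\subseteq G_4$ be its saturation. Because $G_4$ is $h$-slope stable with $\mu(G_4)=\tfrac14(h,h)=\tfrac{30}{4}$ and $\Pic(S)=\bz h$, the maximal destabilizing subsheaf of $\mathcal{G}^{\mathrm{sat}}$ is itself a subsheaf of $G_4$, so $\mu_{\max}(\mathcal{G}^{\mathrm{sat}})<\tfrac{30}{4}$; as every first Chern class is an integer multiple of $h$ and all ranks are $\le 3$, each Harder–Narasimhan factor $W_i$ of $\mathcal{G}^{\mathrm{sat}}$ satisfies $c_1(W_i)\cdot h\le 0$, i.e. $\mu(W_i)\le 0$. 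Each $W_i$ is then a semistable sheaf of non-positive slope on a K3 surface, whence $h^0(W_i)\le\mathrm{rank}(W_i)$ (a nonzero section saturates to a rank-one subsheaf of slope $\ge 0$, forced by semistability to be $\sho_S$, and one induces on the rank). Summing over the filtration gives $h^0(\mathcal{G}^{\mathrm{sat}})\le\sum_i\mathrm{rank}(W_i)=r\le 3$. This contradicts the inclusion $K_s\hookrightarrow H^0(\mathcal{G})\subseteq H^0(\mathcal{G}^{\mathrm{sat}})$ with $\dim K_s=4$. Therefore $\sigma_s\neq 0$ for all $s$, and every fibre of $\rho$ is one-dimensional.

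The main obstacle is precisely this last step: ruling out the \emph{a priori} possible jumping where the four sections of $G_4$ vanishing at $s$ degenerate to rank $\le 3$ along all of $S$ — equivalently, where the surface $S_s=\{Z\in S^{[2]}:s\in Z\}$ would be swallowed by the prime divisor $X'\cong h-4\delta$, producing a two-dimensional fibre. What makes this impossible is the combination of the slope-stability of $G_4$ with the elementary bound $h^0\le\mathrm{rank}$ for semistable sheaves of non-positive slope; in writing this out I would take care to justify that bound cleanly and to dispatch the two harmless degenerate cases, namely the non-reduced point $t=s$ and the possible reducibility of the double cover $\mathcal{C}'\to X'$ (which is connected since it is branched along the nonempty locus $s_1=s_2$), neither of which affects the dimension count.
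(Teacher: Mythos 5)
Your proof is correct, and it takes a genuinely different route from the paper's. You convert the fibre over $s$ into (essentially) the zero locus of the section $\sigma_s\in H^0(S,\det G_4)=H^0(S,\sho_S(h))$ obtained by wedging a basis of the $4$-dimensional space $K_s=G_4^*|_s$ of sections of $G_4$ vanishing at $s$, and you exclude $\sigma_s\equiv 0$ by pure slope considerations: every subsheaf of $G_4$ of rank at most $3$ has non-positive slope because $\mu(G_4)=\tfrac{30}{4}$ and $\Pic(S)=\bz h$, while a torsion-free semistable sheaf of non-positive slope and rank $r$ on such an $S$ satisfies $h^0\le r$, contradicting the injection of the $4$-dimensional $K_s$ into the sections of the rank-$\le 3$ image sheaf. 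The paper argues instead on the level of divisor classes: a two-dimensional fibre would force the surface $\mathrm{Bl}_sS=\{Z\in S^{[2]}\mid s\in Z\}$ to lie inside $X'$, and combining the two restrictions $\det(G_4^{[2]})|_{X'}=\omega_{X'}=h'-2H'$ (adjunction) and $\det(Q_4^*)|_{X'}=h'-4H'$ (\cref{pullback bundle}) gives $2H'|_{\mathrm{Bl}_sS}=3h-11e$, which is not divisible by $2$ in $\Pic(\mathrm{Bl}_sS)=\bz h\oplus\bz e$. Your argument is more elementary and independent of \cref{pullback bundle} and of the adjunction computation, at the cost of supplying the standard bound $h^0\le\mathrm{rank}$ for semistable sheaves of non-positive slope (your sketch --- saturating a section to $\sho_S$ and inducting on the rank --- does work here since $\Pic(S)=\bz h$); it also yields the lower bound on the fibre dimension directly, since $Z(\sigma_s)$ is a nonempty curve in $|h|$. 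The paper's proof is shorter given the divisor-theoretic facts already established about $X'$ as the prime divisor of class $h-4\delta$. Both arguments rely on the identification of $X'$ with the degeneracy locus of $V_8\to G_4^{[2]}$, so neither can be moved earlier in the text.
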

\begin{proof}
    Assume that the fiber over $s\in S$ is two-dimensional. Because $\mathcal{C}'$ is given by the incidence condition that $s$ should lie in the length 2 subscheme given by $x'$ for $(s,x')\in \mathcal{C}'$, and $X'\rightarrow S^{[2]}$ is an embedding, then $\{s,s_1\}\in S^{[2]}$ lies in the exceptional divisor $X'$ for all $s_1\in S$. Consequently, the exceptional divisor $X'$ contains the subscheme $\mathrm{Bl}_sS\subset S^{[2]}$. Let us use $e$ to denote the exceptional $\bp^1$ in the blow up $\mathrm{Bl}_s(S)$, then $\Pic(\mathrm{Bl}_sS)=\bz.h\oplus\bz.e$. Trivially, the divisor $h\in \Pic(S^{[2]})$ restricts to $h\in \Pic(\mathrm{Bl}_sS)$ and the divisor $\delta\in \Pic(S^{[2]})$ restricts to $e\in \Pic(\mathrm{Bl}_sS)$.

    On the other hand, the restriction of $\det(G_4^{[2]})=h-4\delta$ to $X'$ is equal to $\omega_{X'}=h'-2H'$ by the adjunction formula, and the restriction of $\det(Q_4^*)=-2h+7\delta$ is equal to $\det(G_4'(-H'))=h'-4H'$ by \cref{pullback bundle}. Consequently, if $\mathrm{Bl}_sS\subset X'$, then $2H'$ restricts to $3h-11e$. However, $3h-11e$ is not divisible by 2 and this leads to a contradiction.
\end{proof}
By the above Lemma, the degeneracy locus of the restriction of $\mathcal{E}(-H')\rightarrow G_4^*$ to $\{s\}\times X'$ is always a curve. Thus we can apply the Eagon-Northcott construction over $\{s\}\times X'$ and dualize to get:
\begin{equation}
\begin{split}
    0\rightarrow E'(-H')\rightarrow G_{4,s}^* \rightarrow \mathcal{I}_{C'}(3H'-h')\rightarrow 0,\\
    0\rightarrow \sho(h'-3H')\rightarrow G_{4,s}\otimes \sho_{X'}\rightarrow E'^*(H')\rightarrow \mathcal{E}xt^2(\sho_{C'},\sho_{X'}(h'-3H'))\rightarrow0,\label{2ses}
\end{split}
\end{equation}
where $E'$ is given by $s\in S$ and $C'$ is the intersection of $\mathcal{C}'$ with $\{s\}\times X'$. The first sequence is exactly the sequence in the proof of \cite[Proposition 2.2.2]{Frederic}.

Then we consider the composition map $\Hom(E,K)\otimes \Hom(K,F)\rightarrow \Hom(E,F)=V^*$, where $[F]\in M_S(2,h,7)$. Recall that in order for $\Hom(K,F)$ to be nonzero, we need $[K]$ to lie in $X\subset M_{S}(2,h,7)$ by \cref{F' locally free}. Over the divisor $X\times S\subset M_{S}(2,h,7)\times S$, the universal family $\shk$ is given by the following short exact sequence:
\begin{equation}
   0\rightarrow\shk \rightarrow pr_2^* F\rightarrow pr_1^*\sho(H)|_{\Delta_S}\rightarrow 0, \label{universal family of K over X}
\end{equation}
where $pr_1$ and $pr_2$ are the projections from $X\times S$ to $X$ and $S$ respectively and $\Delta_S$ denotes the inverse image of the diagonal via the map $X\times S\rightarrow S\times S$, and the latter map is the composition $pr_2^* F\rightarrow pr_2^*F|_{\Delta_S}=pr_1^*F|_{\Delta_S}\rightarrow pr_1^*\sho(H)|_{\Delta_S}$.

We tensor the above sequence with $pr_2^*E^*$:
$$0\rightarrow\shk \otimes pr_2^*E^*\rightarrow pr_2^* F\otimes pr_2^*E^*\rightarrow pr_2^*E^*|_{\Delta_S}\otimes pr_1^*\sho(H)\rightarrow 0.$$
We consider the pushforward $pr_{1*}$ of the above sequence and compare with (\ref{2ses}). We get the Fourier-Mukai partner's version of (\ref{2ses}):
$$0\rightarrow \sho(h-3H)\rightarrow G'_{4,s'}\rightarrow E^*(H)\rightarrow \mathcal{E}xt^2(\sho_{C},\sho_{X}(h-3H))\rightarrow0.$$ This exact sequence shows that for a fixed $[E]\in M_S(3,h,5)$, the space $\Hom(E,K)$ is one dimensional for a general $[K]\in X\subset M_S(2,h,7)$ (and this holds outside $C\subset X$ which is of codimension 2) and the family of these one dimensional $\Hom(E,K)$ is parametrized by the line bundle $\sho(h-3H)$.

On the other hand, tensoring (\ref{universal family of K over X}) with $pr_2^*F^*$ and taking the pushforward $pr_{1*}$, we see that the space $\Hom(K,F)$ is always one-dimensional and the family of these one-dimensional $\Hom(K,F)$ is parametrized by $\sho_X$.

Therefore, the map $\Hom(E,K)\otimes \Hom(K,F)\rightarrow \Hom(E,F)=V^*$ gives us a rational map $X\dashrightarrow\bp(V^*)$ defined outside $C$, and it is exactly the map $\sho(h-3H)\rightarrow G'_{4,s'}=V^*$ that we have seen above.
\begin{proposition}
    The rational map $X\dashrightarrow\bp(V^*)$ is dominant and is generically 4-to-1.
\end{proposition}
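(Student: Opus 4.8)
The plan is to read the rational map off the incidence correspondence $\mathscr{C}_S\subset X\times\bp(V^*)$ and to compute its degree through the second projection. As explained above, for a general $x\in X$ both $\Hom(E,K)$ and $\Hom(K,F)$ are one-dimensional, so there is a unique $[\psi]\in\bp(V^*)$ for which $\psi\colon E\to F$ factors through the sheaf $K$ attached to $x$; equivalently, the first projection $\pi_1\colon\mathscr{C}_S\to X$ is birational and $\mathscr{C}_S$ is the graph of $X\dashrightarrow\bp(V^*)$. Consequently the degree of the rational map equals the degree of the second projection $\pi_2\colon\mathscr{C}_S\to\bp(V^*)$, and dominance of the map is equivalent to surjectivity of $\pi_2$. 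Since $\dim\mathscr{C}_S=\dim X=3=\dim\bp(V^*)$, it suffices to produce one finite fibre of $\pi_2$ of cardinality $4$ to obtain both assertions at once.

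Next I would describe the fibre of $\pi_2$ over a point $[\psi]\in\bp(V^*)$. By the very definition of $\mathscr{C}_S$, this fibre is the set of $x=(s,f^*)\in X$ with $\mathrm{rank}(\psi_s)\le 1$ and $\mathrm{Im}(\psi_s)\subset\ker(f^*)$. By \cref{4 degenerate point} a nonzero $\psi$ has kernel $\sho_S$ and cokernel of Mukai vector $(0,0,4)$, so the degeneracy scheme $D_\psi=\{s\in S:\mathrm{rank}(\psi_s)\le 1\}$ has length $4$. At a point $s\in D_\psi$ where $\psi_s$ has rank exactly $1$, the image $\mathrm{Im}(\psi_s)$ is a line inside the two-dimensional fibre $F_s$, and the only hyperplane of $F_s$ containing it is the line itself; hence there is a unique $f^*$ (up to scalar) with $\ker(f^*)=\mathrm{Im}(\psi_s)$, contributing exactly one point of $\pi_2^{-1}([\psi])$. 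At a point where $\psi_s=0$, by contrast, every $f^*$ is allowed, contributing a whole $\bp^1$.

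The crux is therefore a genericity statement: for general $[\psi]$ the scheme $D_\psi$ consists of four distinct reduced points, each of corank one. To exclude the corank-two stratum (a point with $\psi_s=0$, which would enlarge the fibre of $\pi_2$ by a $\bp^1$), consider $I=\{([\psi],s):\psi_s=0\}\subset\bp(V^*)\times S$; its fibre over $s\in S$ is $\bp(\ker\mathrm{ev}_s)$, where $\mathrm{ev}_s\colon V^*=\Hom(E,F)\to\Hom(E_s,F_s)$ is evaluation at $s$. Since no nonzero global map $E\to F$ vanishes at a general point of $S$, $\mathrm{ev}_s$ is injective for general $s$, and a dimension count gives $\dim I\le 2<\dim\bp(V^*)$; hence a general $[\psi]$ lies outside the image of $I$ and satisfies $\psi_s\ne 0$ everywhere, so every point of $D_\psi$ has corank one. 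Finally, generic reducedness of $D_\psi$ should follow from a Bertini-type transversality argument for the universal map $E\boxtimes\sho\to F\boxtimes\sho(1)$ over $S\times\bp(V^*)$, equivalently the statement that the degree-four support cover $\mathrm{Supp}(\mathscr{Q})\to\bp(V^*)$ of the universal cokernel $\mathscr{Q}=\mathrm{coker}(E\boxtimes\sho\to F\boxtimes\sho(1))$ is generically étale. I expect this last point to be the main obstacle, since it is the only place where one must argue on the universal family rather than on a single $\psi$.

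Granting the genericity step, the fibre $\pi_2^{-1}([\psi])$ over a general $[\psi]$ consists of exactly the four points $(s_i,f_i^*)$, where $s_1,\dots,s_4$ are the degeneracy points of $\psi$ and $\ker(f_i^*)=\mathrm{Im}(\psi_{s_i})$. Hence $\pi_2$ has nonempty finite fibres of cardinality $4$ over a dense open subset of $\bp(V^*)$, so it is dominant and generically $4$-to-$1$; since $\pi_1$ is birational, the same holds for $X\dashrightarrow\bp(V^*)$.
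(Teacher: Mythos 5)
Your proof follows essentially the same route as the paper's: both identify the fibre of $\mathscr{C}_S\to\bp(V^*)$ over a general $[\psi]$ with the four degeneracy points of $\psi$ given by \cref{4 degenerate point}, each equipped with the unique $f^*$ annihilating the rank-one image $\mathrm{Im}(\psi_s)\subset F_s$, and deduce dominance and degree $4$ from this fibre description. The genericity issues you isolate (excluding corank-two points and the generic reducedness of the length-four degeneracy scheme) are precisely what the paper compresses into ``it is straightforward that the map is generically 4-to-1,'' so your added care fills in, rather than alters, the argument.
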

\begin{proof}
    By \cref{4 degenerate point}, any nonzero map $E\rightarrow F$ degenerates at 4 points. Let $s\in S$ be in the degeneracy locus and we can pick $f^*\in F^*_s=\Hom_{\mathbb{C}}(F_s,\mathbb{C})$ such that the composition $E\rightarrow F\rightarrow k(s)$ is zero. Thus $E\rightarrow F$ factors through the kernel $K$ of $f^*:F\rightarrow k(s)$ and this shows that the rational map is dominant. It is straightforward that the map is generically 4-to-1.
\end{proof}

By definition, the variety $\mathscr{C}_S$ is exactly the graph of the rational map $X\dashrightarrow \bp(V^*)$ using $\sho(h-3H)\rightarrow G'_{4,s'}$ (which is the dual of $G'^*_{4,s'}\twoheadrightarrow \mathcal{I}_C(3H-h)$). Hence we see that $\mathscr{C}_S$ is exactly the blow up of $X$ along $C$ and is equipped with a generically 4-to-1 map $\mathscr{C}_S\rightarrow \bp(V^*)$.

Recall that $\sho(h-3H)$ is a subbundle of the trivial bundle $V_8'\otimes \sho_X$ on $X$, inducing a map $X\rightarrow \bp(V_8')$. The rational map $X\dashrightarrow\bp(G_{4,s'}')$ above is actually obtained by composing $X\rightarrow \bp(V_8')$ with the projection $V_8'=H^0(S',G_4')\rightarrow G_{4,s'}'$. We will prove in \cref{3H-h embedding} that the map $X\rightarrow \bp(V_8')$ is an embedding. In view of (\ref{G4G4dual}), the curve $C$ is the intersection of $X\subset\bp(V_8')$ with $\bp(G_{4,s'}'^*)\subset \bp(V_8')$.

Our next aim is to prove that every $[E]=s'\in M_S(3,h,5)=S'$ (not only for general ones) fits into Mukai's model. By the universal property of $\mathcal{T}$, this amounts to proving that $E\otimes G'_{4,s'}=E\otimes \Hom(E,F)\rightarrow F$ is a family of stable representations of the 4-Kronecker quiver parametrized by $S$. The stability condition is equivalent to requiring that:
\begin{enumerate}[label=(\Alph*)]
    \item The bundle map $F^*\otimes G'_{4,s'}=F^*\otimes \Hom(E,F)\rightarrow E^*$ is surjective over $S$;
    \item The bundle map $\sho_X(-H)\otimes G'_{4,s'}\hookrightarrow F^*\otimes G'_{4,s'}\rightarrow E^*$ has rank at least $2$ everywhere over $X$.
\end{enumerate}
We will prove (A) in \cref{stability condition (A)}. In order to prove (B), we globalize it and show that the map $\shE^*(-H)\hookrightarrow\shE^*\otimes F^* \rightarrow G_4'^*$ has rank at least 2 everywhere over $S'\times X$. We pick $(s,f^*)=x\in X$ and restrict the Fourier-Mukai counterpart of (\ref{Totalfamily}) to $S'\times \{x\}$ to get the following exact sequence: 
$$0\rightarrow E'^*\rightarrow G_4'^* \rightarrow \mathcal{I}'\rightarrow 0.$$
(Note that the fiber of $E'^*$ over $s'$ is equal to the fiber of $\mathcal{E}^*$ over $(s,s')$, hence equal to the fiber of $E$ over $s$.) Now (B) follows from the fact that the fiberwise rank of $\mathcal{I}'$ is at most 2 everywhere for the ideal sheaf $\mathcal{I}'$ of any length two subscheme of $S'$.
\begin{proposition}
    For any $[E]\in M_S(3,h,5)$, there exists a morphism (unique when we fix an isomorphism $\Hom(E,F)^*\cong V$) from $S$ to $\mathcal{T}$ such that $E\rightarrow F\otimes \Hom(E,F)^*$ is equal to the pullback to $S$ of the bundle map $\mathscr{E}\rightarrow \mathcal{F}\otimes V$ over $\mathcal{T}$.  \label{stable rep}
\end{proposition}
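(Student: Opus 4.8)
The plan is to realize the bundle map $E\to F\otimes\Hom(E,F)^*$ as a flat family over $S$ of \emph{stable} representations of the $4$-Kronecker quiver with dimension vector $(3,2)$, and then to quote the universal property of the fine moduli space $\mathcal{T}$ together with its normalized universal family $\mathscr{E}\to\mathcal{F}\otimes V$. Since $E$ and $F$ are locally free and $\Hom(E,F)=\bc^4$ by \cref{ext1=0}, once an identification $\Hom(E,F)^*\cong V$ is fixed the map $E\to F\otimes V$ is automatically a flat $S$-family of quiver representations of the correct dimension vector; the only real content is the pointwise assertion that $E_s\to F_s\otimes V$ is a stable representation for every $s\in S$.

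First I would record the elementary combinatorics of the stability condition. Normalizing Mukai's stability parameter so that the dimension vector $(3,2)$ has slope $0$, a representation is stable exactly when every proper nonzero subrepresentation has strictly negative slope; running through the finitely many possible dimension subvectors, the only potentially destabilizing ones are those of the form $(a,0)$ with $a\ge 1$ and those of the form $(a,1)$ with $a\ge 2$. A subrepresentation of type $(a,0)$ contains one of type $(1,0)$ and one of type $(a,1)$ contains one of type $(2,1)$, so stability of $E_s\to F_s\otimes V$ is equivalent to the absence of subrepresentations of type $(1,0)$ and of type $(2,1)$. A type-$(1,0)$ subrepresentation at $s$ is a nonzero common kernel of the four maps $E_s\to F_s$, i.e. a nonzero element of $\ker(E_s\to F_s\otimes V)$; ruling these out for all $s$ is precisely condition (A), the fibrewise surjectivity of $F^*\otimes\Hom(E,F)\to E^*$. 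A type-$(2,1)$ subrepresentation at $s$ corresponds to a one-dimensional quotient of $F_s$, i.e. to a point of $X=\bp_S(F^*)$ lying over $s$, along which the induced map $\Hom(E,F)\to E_s^*$ drops rank to $\le 1$; ruling these out over all of $X$ is precisely condition (B).

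It then remains to feed in (A) and (B). Condition (B) has been verified above, by globalizing the rank statement to $S'\times X$ and using that the ideal sheaf of a length-two subscheme of $S'$ has fibrewise rank $\le 2$; condition (A) will be established in \cref{stability condition (A)}. Granting both, every fibre $E_s\to F_s\otimes V$ is a stable representation, so $E\to F\otimes V$ is a flat $S$-family of stable representations of dimension vector $(3,2)$. Because $\mathcal{T}$ is a fine moduli space carrying the universal family $\mathscr{E}\to\mathcal{F}\otimes V$, this family is classified by a morphism $f\colon S\to\mathcal{T}$ with $f^*(\mathscr{E}\to\mathcal{F}\otimes V)$ isomorphic, hence (after the chosen identification of arrow spaces) equal, to $E\to F\otimes\Hom(E,F)^*$; uniqueness of $f$ is the uniqueness of the classifying morphism of a family over a fine moduli space, and fixing the isomorphism $\Hom(E,F)^*\cong V$ is exactly what removes the $\mathrm{GL}(V)$-ambiguity in the labelling of the arrows.

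The packaging above is formal; the genuine work is conditions (A) and (B), and the point I would emphasize is that they are needed \emph{everywhere} on $S$ (resp. on $X$) and for \emph{every} $[E]\in M_S(3,h,5)$, not merely for a general one. For general $E$ the stability is essentially Mukai's, but the uniform statement cannot be obtained by a genericity or dimension-count argument; this is why (A) and (B) are proved via the Fourier-Mukai and Eagon-Northcott constructions of the previous sections, and it is there that I expect the main difficulty to lie.
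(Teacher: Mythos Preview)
Your proposal is correct and follows essentially the same route as the paper: reduce the statement to checking, for every fibre, conditions (A) and (B) on the representation $E_s\to F_s\otimes V$, invoke the forward reference \cref{stability condition (A)} for (A), and use the globalized Eagon--Northcott argument on $S'\times X$ for (B), then conclude by the universal property of $\mathcal{T}$. The only difference is expository: you spell out the quiver-stability combinatorics (why only the subrepresentation types $(1,0)$ and $(2,1)$ need to be excluded) more explicitly than the paper, which simply states (A) and (B) as the translation of stability.
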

There is a natural map from $\mathrm{Sym}^2V^*\otimes \sho_{\mathcal{T}}$ to $\mathscr{E}$ as follows: Taking the second wedge product of $\mathscr{E}\rightarrow \mathcal{F}\otimes V$ and applying the Schur functor, we get a map $\wedge^2\mathscr{E}\rightarrow \wedge^2\mathcal{F}\otimes \mathrm{Sym}^2V$. Since $\wedge^2\mathcal{F}\cong \wedge^3\mathscr{E}$, we immediately obtain the desired map $\mathrm{Sym}^2V^*\otimes \sho_{\mathcal{T}}\rightarrow\mathscr{E}$, which was known to be surjective by \cite{twistedcubic}.  
\begin{Corollary}
    Every $[E]\in M_S(3,h,5)$ is globally generated by $\mathrm{Sym}^2G'_{4,s'}$. Consequently, there exists a surjective bundle map $(\mathrm{Sym}^2G_4')(-h')\rightarrow \shE^*$ over $S\times S'$. 
\end{Corollary}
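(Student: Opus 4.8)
The plan is to deduce both assertions from the surjection $\mathrm{Sym}^2V^{*}\otimes\sho_{\mathcal{T}}\twoheadrightarrow\mathscr{E}$ recalled just above (surjective by \cite{twistedcubic}). For the first assertion, fix $s'=[E]\in S'=M_{S}(3,h,5)$ and invoke \cref{stable rep}: it produces a morphism $\mu_{s'}\colon S\to\mathcal{T}$ with $\mu_{s'}^{*}\mathscr{E}=E$, $\mu_{s'}^{*}\mathcal{F}=F$, and $\mu_{s'}^{*}(\mathscr{E}\to\mathcal{F}\otimes V)$ equal to the tautological map $E\to F\otimes\Hom(E,F)^{*}$, so that the trivial bundle $V^{*}$ on $\mathcal{T}$ pulls back to $\Hom(E,F)=G'_{4,s'}$. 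Pulling back $\mathrm{Sym}^2V^{*}\otimes\sho_{\mathcal{T}}\twoheadrightarrow\mathscr{E}$ along $\mu_{s'}$ then yields a surjection $\mathrm{Sym}^2G'_{4,s'}\otimes\sho_{S}\twoheadrightarrow E$, since pullback preserves surjectivity; this is exactly the global generation claim.

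For the relative statement over $S\times S'$ I would \emph{not} try to pull back from $\mathcal{T}$ (the morphisms $\mu_{s'}$ do not patch into a single morphism $S\times S'\to\mathcal{T}$, because $G_4'$ is not trivial on $S'$). Instead I would redo the ``second wedge and Schur'' construction directly over $S\times S'$, starting from the universal analogue of the map $\mathscr{E}\to\mathcal{F}\otimes V$, namely the map $\mathcal{E}^{*}\to F\otimes G_4'^{*}$ of Section 4 (a rank-$3$ bundle mapping to the tensor of a rank-$2$ and a rank-$4$ bundle, with $F=p_1^{*}F$ and $G_4'^{*}=p_2^{*}G_4'^{*}$). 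Concretely: take $\wedge^{2}$, project onto the Cauchy summand $\wedge^{2}F\otimes\mathrm{Sym}^2G_4'^{*}$, use the canonical identification $\wedge^{2}\mathcal{E}^{*}\cong\mathcal{E}\otimes\det\mathcal{E}^{*}$, and then the identity $\det\mathcal{E}^{*}=p_1^{*}\sho_{S}(h)\otimes p_2^{*}\sho_{S'}(-h')$, which follows from the equality $\det\mathcal{E}=\sho(h'-h)$ established above together with $\det F=\sho_{S}(h)$, and which plays here the role that the isomorphism $\wedge^{3}\mathscr{E}\cong\wedge^{2}\mathcal{F}$ plays over $\mathcal{T}$. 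After dualizing this produces a morphism $(\mathrm{Sym}^2G_4')(-h')\to\mathcal{E}^{*}$ over $S\times S'$, the twist by $-h'$ being understood as $-p_2^{*}h'$.

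Finally I would check surjectivity fiberwise: the cokernel is coherent with closed support, so it suffices to show that it meets every fiber of $p_2\colon S\times S'\to S'$ trivially. Restricting everything to $S\times\{s'\}$, the twist $p_2^{*}\sho_{S'}(-h')$ becomes trivial, $G_4'|_{s'}=\Hom(E,F)$, $\mathcal{E}^{*}|_{S\times\{s'\}}=E$, and since the construction uses only functorial operations commuting with pullback along the closed immersion $S\times\{s'\}\hookrightarrow S\times S'$, while $\mathcal{E}^{*}\to F\otimes G_4'^{*}$ restricts to the tautological map $E\to F\otimes\Hom(E,F)^{*}=\mu_{s'}^{*}(\mathscr{E}\to\mathcal{F}\otimes V)$ by \cref{stable rep}, the restricted morphism is precisely $\mu_{s'}^{*}(\mathrm{Sym}^2V^{*}\otimes\sho_{\mathcal{T}}\to\mathscr{E})$, which is surjective by the first paragraph. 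The hard part will not be any single estimate but this base-change bookkeeping: verifying that the relative construction over $S\times S'$ genuinely restricts fiberwise to the $\mathcal{T}$-construction pulled back by $\mu_{s'}$, and keeping track of the determinant twists --- in particular the discrepancy $p_2^{*}\sho_{S'}(-h')$ between $\wedge^{3}\mathcal{E}^{*}$ and $\wedge^{2}F$, which is invisible on fibers but must be carried along globally in order to land on the correctly twisted bundle. Everything else is formal.
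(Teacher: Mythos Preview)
Your proposal is correct and follows exactly the paper's own argument: the paper's proof is a one-sentence reference back to the $\mathcal{T}$-construction (``applying the second wedge product and the Schur functor to $\shE^{*}\to F\otimes G_4'^{*}$ over $S\times S'$ as above''), and you have simply unpacked this in full, including the determinant bookkeeping and the fiberwise surjectivity check via \cref{stable rep}. The ``hard part'' you flag (that the relative Schur construction restricts to $\mu_{s'}^{*}$ of the $\mathcal{T}$-construction) is genuine but routine, since $\wedge^{2}$, the Cauchy projection, and duality all commute with base change for vector bundles, and $\shE^{*}\to F\otimes G_4'^{*}$ restricts on $S\times\{s'\}$ to the tautological map $E\to F\otimes\Hom(E,F)^{*}$ by construction of $G_4'$.
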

\begin{proof}
    The map $(\mathrm{Sym}^2G_4')(-h')\rightarrow \shE^*$ is obtained by applying the second wedge product and the Schur functor to $\shE^*\rightarrow F\otimes G_4'^*$ over $ S\times S'$ as above.
\end{proof}
  
 \section{More Projective Geometry}
Consider over $X$ the composition $E_8\rightarrow (V_8')^*\otimes F\rightarrow (V_8')^*\otimes \sho_{X}(H)$. We obtain a map $V_8'\rightarrow E_8^*(H)$. Fibrewise over $x=(s,f^*)\in X$, this map should be the map $V_8'=H^0(G_4')\rightarrow H^0(E')$ (see Section~4), where $E'$ is the rank 3 Mukai bundle over $S'$ determined by $x\in X$. (Recall that $E_8^*$ is by definition the bundle whose fiber over $s$ is $H^0(E')$). We have seen from the Eagon-Northcott complex that the kernel of $V_8'=H^0(G_4')\rightarrow H^0(E')$ should be the one-dimensional space $\bc.v'\subset V_8'$ associated to $x\in X$. As a result, the bundle map $V_8'\rightarrow E_8^*(H)$ is of constant rank 7, and we get the exact sequence:
 \begin{equation}
     0\rightarrow \sho(h-3H)\rightarrow V_8'\rightarrow E_8^*(H)\rightarrow \sho(5H-2h)\rightarrow 0.\label{h-3H to V8' four terms}
 \end{equation}
 
We investigate the subbundle map $\sho(h-3H)\rightarrow V_8'$ by giving it another interpretation. We consider the composition $\Hom(E_8,K)\otimes \Hom(K,F)\rightarrow \Hom(E_8,F)=V_8'$, where $[K]\in X\subset M_{S}(2,h,7)$. Recall that the vector space $\Hom(E_8,K)$ is always one dimensional by \cref{one dimensional E_8 to F}. Tensoring the short exact sequence (\ref{universal family of K over X}) with $pr_2^*E_8^*$ and applying the pushforward $pr_{1*}$, we recover the map $V_8'\rightarrow E_8^*(H)$ which appeared in (\ref{h-3H to V8' four terms}). This shows that the family of one dimensional $\Hom(E_8,K)$ is parametrized by $\sho(h-3H)$ and the map $\sho(h-3H)\rightarrow V_8'\otimes \sho_X$ parametrizes the map $\Hom(E_8,K)\otimes \Hom(K,F)\rightarrow \Hom(E_8,F)=V_8'$.
Using this interpretation, we can show the injectivity of the morphism $X\rightarrow\bp(V_8')$ induced by $\sho(h-3H)\hookrightarrow V_8'$.
\begin{proposition}
   The morphism $X\rightarrow\bp(V_8')$ is an embedding. The image consists of those elements in $\Hom(E_8,F)$ such that the corresponding map $E_8\rightarrow F$ is not surjective.\label{3H-h embedding}
\end{proposition}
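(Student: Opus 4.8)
The plan is to identify the morphism $\iota\colon X\to\bp(V_8')$ induced by $\sho(h-3H)\hookrightarrow V_8'\otimes\sho_X$ as sending a point $x=(s,f^*)$ to the line $[\psi_x]\in\bp(\Hom(E_8,F))$ spanned by the composition $\psi_x\colon E_8\twoheadrightarrow K_x\hookrightarrow F$, where $K_x=\ker(F\to k(s))$ is the sheaf attached to $x$ under the identification $X\cong\{[K]\in M_S(2,h,7):K\text{ not locally free}\}$ (the $S$-version of \cref{F' locally free}), and then to recover $x$ functorially from $[\psi_x]$. By the interpretation recalled just before the statement, $\iota(x)$ is the image of $\Hom(E_8,K_x)\otimes\Hom(K_x,F)\to\Hom(E_8,F)=V_8'$; both factors are one-dimensional (the first by \cref{one dimensional E_8 to F}, the second by the discussion of (\ref{universal family of K over X})), and a nonzero element of $\Hom(E_8,K_x)$ is surjective, again by \cref{one dimensional E_8 to F}. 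Hence $\psi_x\neq 0$ and $\mathrm{coker}\,\psi_x\cong F/K_x\cong k(s)$; in particular $\iota(x)$ lies in the locus where $E_8\to F$ is not surjective, and $[\psi_x]$ determines $s=\mathrm{Supp}(\mathrm{coker}\,\psi_x)$ and then $f^*$ as the functional on $F_s$ whose kernel is the (one-dimensional) image of $(K_x)_s$ in $F_s$.

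Conversely I would take an arbitrary nonzero $\psi\colon E_8\to F$ that is not surjective and analyze $K:=\mathrm{im}\,\psi\subset F$. A short slope-stability argument — using $\mu(E_8)<\mu(F)$ and that $K$ is a proper quotient of the stable bundle $E_8$ — shows $K$ has rank $2$ with $c_1(K)=h$, so $K^{**}=F$ and $F/K$ is $0$-dimensional of some length $\ell\ge 1$, with $v(K)=(2,h,8-\ell)$. The key point, which is also the main obstacle, is to prove $\ell=1$: if $\ell\ge 2$, choose an intermediate subsheaf $K\subsetneq K_1\subset F$ with $F/K_1\cong k(s)$; then $[K_1]\in M_S(2,h,7)$ by \cref{F' locally free}, while the induced nonzero map $E_8\to K_1$ has image $K\subsetneq K_1$, contradicting the surjectivity of every nonzero map $E_8\to K_1$ given by \cref{one dimensional E_8 to F}. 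Thus $\ell=1$, so $[K]\in M_S(2,h,7)$ lies in the image of $X$, we have $K=K_x$ for a unique $x$, and $[\psi]=[\psi_x]$. This simultaneously identifies the image of $\iota$ with the non-surjective locus and shows that $\iota$ is injective.

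Finally, to upgrade injectivity to a closed immersion I would produce the inverse as a morphism. Over $\bp(V_8')\times S$ the tautological sub-line-bundle yields a universal homomorphism $\Psi\colon p_S^*E_8\to p_S^*F\otimes p_{\bp}^*\sho_{\bp(V_8')}(1)$ whose restriction to a slice $\{[\psi]\}\times S$ is $\psi$ up to a trivialization, so $\mathrm{coker}(\Psi|_{\{[\psi]\}\times S})=\mathrm{coker}\,\psi$. Over the image $Z$ of $\iota$ (which is closed, $X$ being projective) this cokernel has length $1$ on every slice, hence $\ker(\Psi|_{Z\times S})$ is flat over $Z$ with fibers the stable sheaves $K_x$; it therefore defines a morphism $Z\to M_S(2,h,7)$ with image $\{[K]:K\text{ not locally free}\}$, i.e.\ a morphism $Z\to X$. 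Since this is set-theoretically inverse to $\iota$, it follows that $\iota$ is an isomorphism onto $Z$, hence a closed immersion, and the description of the image has been obtained in the previous step. (Alternatively one can bypass the inverse and check directly that $d\iota$ is injective at every point, using the same cokernel construction to describe the differential.)
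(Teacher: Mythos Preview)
Your argument is correct and essentially identical to the paper's: slope stability forces the image of a nonzero non-surjective $\psi\colon E_8\to F$ to have rank $2$ and $c_1=h$, then \cref{one dimensional E_8 to F} rules out $\ell\ge 2$ via an intermediate $K_1$, and the inverse comes from the Quot-scheme interpretation of $X=\bp_S(F^*)$. One slip to correct in your final paragraph: the family of $K_x$'s is $\mathrm{im}(\Psi|_{Z\times S})$ (equivalently $\ker\bigl(p_S^*F\otimes p_{\bp}^*\sho(1)\to\mathrm{coker}\,\Psi\bigr)$), not $\ker(\Psi|_{Z\times S})$, which would be the rank-$6$ kernel inside $p_S^*E_8$.
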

\begin{proof}
    We begin by noticing that the image of any nonzero map $E_8\rightarrow F$ always has rank 2 and first Chern class $h$ (because of slope stability). If the image is not equal to $F$, then the image must be contained in some subsheaf $K$, where $[K]\in X\subset M_S(2,h,7)$. However, any nonzero map $E_8\rightarrow K$ must be surjective (\cref{one dimensional E_8 to F}). Consequently, the image is either $F$ or some $[K]\in M_S(2,h,7)$. 

    Clearly the image of $X$ lies in the locus $\tilde{X}$ where the corresponding map $E_8\rightarrow F$ is not surjective. When $E_8\rightarrow F$ is not surjective, the cokernel is a length one zero-dimensional torsion sheaf. This gives an inverse map from $\tilde{X}$ to the corresponding Quot scheme $X$ for the rank 2 bundle $F$.
\end{proof}

 We investigate another composition $\Hom(G_4,K)\otimes \Hom(K,F)\rightarrow\Hom(G_4,F)=V_{10}^*$. This recovers some key constructions in \cite{Frederic}.
 
 \begin{lemma}
    $\dim(\Hom(G_4,K))=6$, $\Ext^1(G_4,K)=\Ext^2(G_4,K)=0$ for any $[K]\in M_S(2,h,7)$.
\end{lemma}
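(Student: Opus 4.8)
The plan is to reduce the lemma to one Riemann--Roch computation together with two vanishing results, the less formal of which uses the presentation of $K$ as a quotient of $E_8$ from \cref{maintheorem}.

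First I would compute the Euler characteristic. As $(S,h)$ has genus $16$, $h^2=30$, and Riemann--Roch gives $\chi(G_4,K)=4\cdot 7+2\cdot 4-h^2=6$. It therefore suffices to show $\Ext^1(G_4,K)=\Ext^2(G_4,K)=0$, for then $\dim\Hom(G_4,K)=\chi(G_4,K)=6$. The vanishing of $\Ext^2(G_4,K)\cong\Hom(K,G_4)^*$ follows from slope stability: $G_4$ is $h$-slope stable, $K$ is $h$-slope stable (if locally free because $\Pic(S)=\bz h$ and $h$ is primitive; otherwise because $K^{**}\cong F$ is slope stable), and $\mu(K)=15>\tfrac{15}{2}=\mu(G_4)$, so there is no nonzero morphism $K\to G_4$. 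The same slope inequality gives $\Hom(E,G_4)=0$ for every $[E]\in M_S(3,h,5)$ (where $\mu(E)=10$) and $\Hom(E_8,G_4)=0$ (where $\mu(E_8)=\tfrac{45}{4}$); both will be used below.

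The heart of the matter is $\Ext^1(G_4,K)=0$. I would first record that $\Ext^1(G_4,E_8)=\Ext^2(G_4,E_8)=0$. Indeed $\Ext^2(G_4,E_8)\cong\Hom(E_8,G_4)^*=0$ by the slope remark; $\Hom(G_4,E_8)\cong(W_{10}')^*$ is $10$-dimensional by \cref{interpretation}; and $\chi(G_4,E_8)=4\cdot 17+8\cdot 4-3h^2=10$; hence $\dim\Ext^1(G_4,E_8)=10+0-10=0$. Then I would use the short exact sequence of \cref{maintheorem} with $S$ and $S'$ interchanged: writing $[Z']\in(S')^{[2]}=\mathrm{Hilb}^2(M_S(3,h,5))$ for the point corresponding to $[K]\in M_S(2,h,7)$, there is an exact sequence
\[
0\longrightarrow \mathcal G_{Z'}\longrightarrow E_8\longrightarrow K\longrightarrow 0,\qquad \mathcal G_{Z'}=p_{1*}(\shE^*\otimes p_2^*\sho_{Z'}),
\]
where $\mathcal G_{Z'}$ is a rank $6$ bundle on $S$ that is a direct sum of two stable bundles of class $(3,h,5)$ when $Z'$ is reduced, and an iterated extension of such bundles in general. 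Applying $\Hom(G_4,-)$ gives the exact segment $\Ext^1(G_4,E_8)\to\Ext^1(G_4,K)\to\Ext^2(G_4,\mathcal G_{Z'})$; the first term vanishes by the previous step, and $\Ext^2(G_4,\mathcal G_{Z'})\cong\Hom(\mathcal G_{Z'},G_4)^*=0$ since $\Hom(-,G_4)$ is left exact and annihilates each graded piece of $\mathcal G_{Z'}$, all of which lie in $M_S(3,h,5)$. Therefore $\Ext^1(G_4,K)=0$, and putting the three facts together, $\dim\Hom(G_4,K)=6$.

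I expect the step needing the most care to be neither the Riemann--Roch bookkeeping nor the diagram chase, but the identification of the objects under Fourier--Mukai: one must verify that the bundle $E_8$ in the sequence above --- i.e.\ the unique stable bundle on $S$ with Mukai vector $(8,3h,17)$ --- is exactly the $E_8$ for which \cref{interpretation} computes $\Hom(G_4,E_8)\cong(W_{10}')^*$, and that in the interchanged form of \cref{maintheorem} the kernel $\mathcal G_{Z'}$ is built from Mukai bundles of class $(3,h,5)$ (slope $10$), not from their duals, so that the slope comparison $10>\tfrac{15}{2}$ really does apply; this rests on the normalization of $\shE,\shE^*$ fixed earlier. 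A smaller point is that the last step requires no genericity hypothesis on $Z'$: $\Hom(-,G_4)$ kills any sheaf filtered by objects with no nonzero map to $G_4$, so the non-reduced case is automatic.
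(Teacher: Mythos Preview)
Your proof is correct and follows essentially the same route as the paper: both apply $\Hom(G_4,-)$ to the presentation $0\to \mathcal G_{Z'}\to E_8\to K\to 0$ from \cref{maintheorem} and feed in the vanishing of $\Ext^i(G_4,E_8)$ and of $\Ext^2(G_4,\mathcal G_{Z'})$. The only difference is in how the auxiliary vanishings are justified: the paper invokes full faithfulness of $\phi_\shE$ directly (using $\phi_\shE(G_4)=F'[-2]$ to get $\Ext^i(G_4,E)=\Ext^i_{S'}(F',k(s'))$, hence $\dim\Hom(G_4,E)=2$ and $\Ext^{1,2}(G_4,E)=0$, and similarly $\Ext^i(G_4,E_8)=\Ext^i_{S'}(F',\sho_{S'})$), and then reads off $\dim\Hom(G_4,K)=10-4$ from the long exact sequence; you instead compute $\chi(G_4,K)=6$ by Riemann--Roch and use slope comparisons for the top Ext groups, so you only need $\Ext^2(G_4,\mathcal G_{Z'})=0$ rather than $\Ext^1$. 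Your version is slightly more elementary in that it avoids appealing to the full Fourier--Mukai equivalence for the $\Ext$-vanishing on the kernel side, at the cost of a small Riemann--Roch bookkeeping step; the paper's version is more uniform and gives the extra information $\dim\Hom(G_4,E)=2$ for free.
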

 \begin{proof}
     Recall that $\dim(\Hom(G_4,E_8))=10, \ \Ext^1(G_4,E_8)=\Ext^2(G_4,E_8)=0$ using the fully faithfulness of Fourier-Mukai transform. Moreover $\dim(\Hom(G_4,E))=2$ and $\Ext^1(G_4,E)=\Ext^2(G_4,E)=0$ because $p_{2*}(\shE\otimes p_1^*G_4)=F'[-2]$. Now by applying $\Hom(G_4,-)$ to the Fourier-Mukai partner's version of the short exact sequence (\ref{2h7_ses}), we obtain what we want.
 \end{proof}
 We tensor the short exact sequence (\ref{universal family of K over X}) with $pr^*_2G_4^*$ and do the pushforward $pr_{1*}$ to get the following short exact sequence:
 \begin{equation}
     0\rightarrow pr_{1*}(\shk \otimes pr_2^*G_4^*)\rightarrow V_{10}^*\otimes\sho_X\rightarrow G_4^*(H) \rightarrow 0.\label{rank 6 subbundle}
 \end{equation}
 By comparing this short exact sequence with (\ref{N_X}), we see that $pr_{1*}(\shk \otimes pr_2^*G_4^*)\cong N_X(-2H)$, where $N_X$ is the normal bundle of the embedding $X\hookrightarrow \bp(W_{10}^*)$. Recall that it is the subbundle inclusion $N_X(-2H)\hookrightarrow V_{10}^*$ that defines the map $X\rightarrow DV(t_2)\subset G(6,V_{10}^*)$ in \cite{Frederic}. Moreover, the map $\bp_X(N_X(-2H))=\bp_X(pr_{1*}(\shk \otimes pr_2^*G_4^*))\rightarrow \bp(V_{10}^*)$ is exactly the restriction of the map $\mathrm{Bl}_X(\bp(W_{10}^*))\rightarrow \bp(V_{10}^*)$ to the exceptional divisor of the blow up $\mathrm{Bl}_X(\bp(W_{10}^*))$.
 \begin{Corollary}
     The rank of the composition of the bundle map $W_{10}'^*\otimes \sho_{\bp(V_8')}(-1)\hookrightarrow W_{10}'^*\otimes V_8'\otimes \sho_{\bp(V_8')}\rightarrow V_{10}^*\otimes \sho_{\bp(V_8')}$ is exactly 6 over $X\hookrightarrow \bp(V_8')$.
     %In particular, the rank is at least 6 everywhere over $\bp(V_8')$.
 \end{Corollary}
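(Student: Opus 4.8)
The plan is to reduce the corollary to a pointwise rank computation and then to evaluate that rank through the Fourier-Mukai equivalence. First I would restrict the displayed bundle map to $X\hookrightarrow\bp(V_8')$. By \cref{3H-h embedding} this embedding is the one defined by the subbundle inclusion $\sho_X(h-3H)\hookrightarrow V_8'\otimes\sho_X$, so $\sho_{\bp(V_8')}(-1)|_X\cong\sho_X(h-3H)$, and over a point $x=(s,f^*)\in X$ the tautological line is the line $\bc\cdot v'_x\subset V_8'=\Hom(E_8,F)$ spanned by the map $v'_x\colon E_8\to F$ attached to $x$. Combined with the identification of $W_{10}'^*\otimes V_8'\to V_{10}^*$ with the composition pairing $\Hom(G_4,E_8)\otimes\Hom(E_8,F)\to\Hom(G_4,F)$ (\cref{interpretation} and \cref{V_10 are dual to each other}), the composed bundle map becomes, fibrewise over $x$, the linear map $\psi\mapsto v'_x\circ\psi$ from $\Hom(G_4,E_8)$ to $\Hom(G_4,F)$, whose kernel is $\Hom(G_4,R_x)$ where $R_x:=\Ker(v'_x\colon E_8\to F)$. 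Since $\dim\Hom(G_4,E_8)=\dim W_{10}'^*=10$, the corollary is equivalent to the statement that $\dim\Hom(G_4,R_x)=4$ for every $x\in X$.

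To compute this I would identify $R_x$ using the geometry already in place. For $x\in X$ the map $v'_x$ is not surjective (\cref{3H-h embedding}), so $\image(v'_x)$ is a sheaf $K_x$ with $[K_x]\in X\subset M_S(2,h,7)$, and $R_x=\Ker(E_8\twoheadrightarrow K_x)$. Applying \cref{maintheorem} with the roles of $S$ and $S'$ interchanged --- the explicit isomorphism $M_S(2,h,7)\cong (S')^{[2]}=\mathrm{Hilb}^2(M_S(3,h,5))$ together with the short exact sequence it carries --- yields $R_x\cong p_{1*}(\shE^*\otimes p_2^*\sho_{Z'_x})$, where $Z'_x\subset S'$ is the length two subscheme corresponding to $[K_x]$. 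Writing $\Psi=p_{1*}(\shE^*\otimes p_2^*(-))\colon D^b(S')\to D^b(S)$ for the Fourier-Mukai equivalence, this reads $R_x\cong\Psi(\sho_{Z'_x})$, while $G_4\cong p_{1*}(\shE^*\otimes p_2^*F')=\Psi(F')$. Hence
\[
\Hom_S(G_4,R_x)\cong\Hom_{S'}(F',\sho_{Z'_x})=H^0\bigl(S',(F')^{\vee}|_{Z'_x}\bigr),
\]
which is $4$-dimensional because $F'$ is locally free of rank $2$ and $Z'_x$ is zero-dimensional of length $2$. This gives $\dim\Hom(G_4,R_x)=4$, hence fibrewise rank $10-4=6$, for all $x\in X$.

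The delicate point is that the identification $R_x\cong\Psi(\sho_{Z'_x})$ must hold at \emph{every} point of $X$ and not merely generically: the rank of a bundle map is only lower semicontinuous, so it could a priori drop on a proper closed subset. I would therefore invoke \cref{maintheorem} in its form valid for all of $M_S(2,h,7)$, and check that the length two subscheme attached to $[K_x]$ by that isomorphism coincides with the one attached to $x\in X$ via the symmetric version of the embedding $X'\hookrightarrow S^{[2]}$ --- a compatibility that is part of the statement of \cref{maintheorem}. A slightly longer alternative, which also works pointwise, is to bound the rank above by $6$ via the factorization of $\psi\mapsto v'_x\circ\psi$ through the $6$-dimensional space $\Hom(G_4,K_x)$ (the lemma preceding \eqref{rank 6 subbundle}), and below by $6$ from $\dim\Hom(G_4,R_x)\le\chi(G_4,R_x)=4$ once one knows $\Ext^1(G_4,R_x)=0$ --- which again follows by transporting it to $\Ext^1_{S'}(F',\sho_{Z'_x})=0$; the direct computation above subsumes both bounds.
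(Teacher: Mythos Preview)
Your argument is correct but takes a different path from the paper. The paper's proof is shorter and stays entirely on $S$: it observes that over $x\in X$ the map $\Hom(G_4,E_8)\to\Hom(G_4,F)$ factors as the surjection $\Hom(G_4,E_8)\twoheadrightarrow\Hom(G_4,K_x)$ (surjective because $\Ext^1(G_4,R_x)=0$, which follows from the swapped sequence~\eqref{2h7_ses} together with $\Ext^1(G_4,E)=0$) followed by the injection $\Hom(G_4,K_x)\hookrightarrow\Hom(G_4,F)$; since $\dim\Hom(G_4,K_x)=6$ by the lemma just proved, the rank is exactly $6$. Your main route instead computes the \emph{kernel} $\Hom(G_4,R_x)$ by transporting both $G_4$ and $R_x$ through the Fourier--Mukai equivalence to $\Hom_{S'}(F',\sho_{Z'_x})\cong\bc^4$, and deduces rank $10-4=6$. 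Both are valid; the paper's approach is more economical because it plugs directly into the lemma it has just established, whereas your argument has the merit of making the Fourier--Mukai symmetry between $S$ and $S'$ do the work. It is worth noting that the ``slightly longer alternative'' you sketch at the end --- bounding above via the factorization through $\Hom(G_4,K_x)$ and below via $\Ext^1(G_4,R_x)=0$ --- is precisely the paper's proof.
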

\begin{proof}
    For $x\in X$, the corresponding point in $\bp(V_8')=\bp(\Hom(E_8,F))$ is the nonzero composition map $f_x:E_8\rightarrow K\rightarrow F$. The fiber of the above bundle map $\Hom(G_4,E_8)\otimes\sho_{\bp(V_8')}(-1)= W_{10}'^*\otimes \sho_{\bp(V_8')}(-1)\rightarrow V_{10}^*\otimes \sho_{\bp(V_8')}=\Hom(G_4,F)\otimes \sho_{\bp(V_8')}$ over the point $x\in X$ is given by composing with $f_x:E_8\rightarrow K\rightarrow F$. Since $\Hom(G_4, E_8)\rightarrow \Hom(G_4,K)$ is surjective by (\ref{2h7_ses}) and the map $\Hom(G_4,K)\hookrightarrow \Hom(G_4,F)$ is injective, we see that the rank of the map over $X$ is exactly 6.
    %The last statement follows from the fact that $X\subset \bp(V_8')$ is a smooth subvariety.
\end{proof}
\begin{remark}
    Over $X$, the rank 6 subbundle $pr_{1*}(\shk \otimes pr_2^*G_4^*)\hookrightarrow V_{10}^*$ over $X$ is equal to the image of the map $W_{10}'^*\otimes \sho_{X}(h-3H)\hookrightarrow \Hom(G_4,E_8)\otimes \Hom(E_8,F)\rightarrow \Hom(G_4,F)=V_{10}^*$. A similar construction does not exist on $S'^{[2]}$ or $M_{S}(2,h,7)$, because $\sho(h-3H)$ is not the restriction of any line bundle over these hyper-K\"ahler fourfolds. Nonetheless, the author still expects that the corresponding rank 6 subbundle over $M_S(2,h,7)$ is given by $pr_{1*}(\shk \otimes pr_2^*G_4^*)$, but it remains unclear how this could be a subbundle of $V_{10}^*=V_{10}'=\Hom(G_4,F)$.\label{rank 6 subbundle remark}
\end{remark}

 %   The theoretical identification $V_{10}'\cong V_{10}^*$ is overly mysterious and involves two K3 surfaces rather than one. It would be nice if we can translate all these identifications and trivectors over $S$ itself.

   % Since the inverse Fourier-Mukai transformation of $F'[-2]$ is equal to $G_4$, we have $p_{2*}(\mathcal{E}\otimes p_1^*G_4)=F'[-2]$.
 %   Similarly we would have $p_{1*}(\mathcal{E}^*\otimes p_2^*G_4')=F[-2]$. Recall that $p_{1*}(\mathcal{E}^*\otimes p_2^*F')=G_4$.
%\begin{assumption}
%    $p_{1*}(\mathcal{E}^*\otimes p_2^*\sho_{S'})=E_8[-2]$, where $E_8$ is a vector bundle of Mukai vector $(8,3h,17)$ over S.
%\end{assumption}
%\begin{remark}
%    Since the restriction of $\det(G_4^{[2]})=h-4\delta$ is expected to be $\omega_{X'}=h'-2H'$ by adjunction formula and the restriction of $\det(Q_4^*)=-2h+7\delta$ is expected to be $\det(G_4'(-H'))=h'-4H'$, we would expect that $\delta'$ restricts to $8H-3h=\det(E_8^*(H))$.
%\end{remark}
By definition, we have $p_{1*}(\mathcal{E})=E_8^*$, hence a map $p_1^*E_8^*\rightarrow \mathcal{E}$ and similarly $p_2^*E_8'^*\rightarrow\mathcal{E}^*$. We have a commutative diagram (over $S\times S'$):
\[
\begin{tikzcd}
V_8\otimes (F')^*\ar[r,two heads]\ar[d,two heads]&(E_8')^*\ar[r]\ar[d,two heads]&(G_4')^*\otimes W_{10}\ar[d, two heads]\\
G_4\otimes (F')^* \ar[r,two heads]\ar[d,hook]& \mathcal{E}^*\ar[r,hook]\ar[d,hook]& (G_4')^*\otimes F\ar[d,hook]\\
G_4\otimes (W_{10}')^*\ar[r]&E_8\ar[r,hook]& (V_8')^*\otimes F
\end{tikzcd}
\]
The maps in the bottom row are given by the identifications $\Hom(E_8,F)=V_8'$ and $\Hom(G_4,E_8)=(W_{10}')^*$. The composition of $E_8'^*\rightarrow\shE^*\rightarrow E_8$ gives a map $H^0(S',E_8')^*\rightarrow H^0(S,E_8)$ using the K\"unneth formula.
\begin{lemma}
    The map $H^0(S',E_8')^*\rightarrow H^0(S,E_8)$ above is an isomorphism.
\end{lemma}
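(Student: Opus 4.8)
The plan is to recognize this map as an instance of the general K\"unneth-versus-adjunction lemma proved inside the proof of \cref{V_10 are dual to each other}, and then to reduce the statement to a short cohomology computation. Concretely, I would invoke that general lemma with $X=S$, $Y=S'$, $\shp=\shE^*\in D^b(S\times S')$, $A_1=\sho_S$ and $B_2=\sho_{S'}[-2]$. With these choices $B_1=p_{2*}(\shE^*\otimes p_1^*\sho_S)=p_{2*}\shE^*$, which by \cref{vanishing E} is a vector bundle of rank $8$ in degree $0$ whose fibre over $[E']\in S'=M_S(3,h,5)$ is $H^0(S,E')=\bc^8$; comparing with $\phi_\shE(\sho_S)=E_8'[-2]$ identifies it as $B_1=(E_8')^*$. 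Likewise $A_2=p_{1*}(\shE\otimes p_2^*(\sho_{S'}[-2])\otimes p_2^*(\sho_{S'}[2]))=p_{1*}\shE=E_8^*$. The two ``natural maps'' feeding the general lemma are then precisely the adjunction counits $p_2^*(E_8')^*=p_2^*p_{2*}\shE^*\to\shE^*$ and $p_1^*E_8^*=p_1^*p_{1*}\shE\to\shE$; dualizing the latter to $\shE^*\to p_1^*E_8$ and composing with the former gives exactly the sheaf map $p_2^*(E_8')^*\to\shE^*\to p_1^*E_8$ from the statement, so the map it induces through the K\"unneth formula is precisely the map $H^0(S',E_8')^*\to H^0(S,E_8)$ under study.

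By the general lemma this K\"unneth map coincides with the isomorphism $\rhom(B_2,B_1)\xrightarrow{\sim}\rhom(A_2,A_1)$, so it remains only to evaluate the two sides. For the source, $\rhom(B_2,B_1)=\rhom_{D^b(S')}(\sho_{S'}[-2],(E_8')^*)=R\Gamma(S',(E_8')^*)[2]$; by Serre duality on the K3 surface $S'$ together with the vanishing $H^1(S',E_8')=H^2(S',E_8')=0$ from the corollary above, $R\Gamma(S',(E_8')^*)$ is concentrated in degree $2$, where it equals $H^0(S',E_8')^*$, so $\rhom(B_2,B_1)=H^0(S',E_8')^*$ placed in degree $0$. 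For the target, $\rhom(A_2,A_1)=\rhom_{D^b(S)}(E_8^*,\sho_S)=R\Gamma(S,E_8)$, which by the evident $S$-analogue of that corollary --- valid by the symmetry of the whole construction under interchanging $S$ and $S'$ --- has $H^1=H^2=0$ and hence equals $H^0(S,E_8)$ in degree $0$. Thus the general lemma produces an honest isomorphism of vector spaces $H^0(S',E_8')^*\xrightarrow{\sim}H^0(S,E_8)$, which by the first step is the map in the statement, proving the lemma.

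The only delicate point is the identification asserted at the end of the first paragraph: one must check that the composite $p_2^*(E_8')^*\to\shE^*\to p_1^*E_8$ literally reproduces the output of the general lemma's recipe for the chosen data --- that is, that its first arrow is the counit of $(p_2^*,p_{2*})$ at $\shE^*$, its second arrow is the dual of the counit of $(p_1^*,p_{1*})$ at $\shE$, and that passing to the associated global section via K\"unneth agrees on the nose with the general lemma's ``same trick'' construction. Once this bookkeeping is in place everything is formal, and all the cohomology vanishings that are needed are already available above.
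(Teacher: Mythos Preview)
Your proposal is correct and follows exactly the approach the paper indicates: the paper's entire proof is the single sentence ``This can be proved in a similar way as in \cref{V_10 are dual to each other},'' and you have spelled out precisely that similarity by instantiating the general adjunction/K\"unneth lemma with $\shp=\shE^*$, $A_1=\sho_S$, $B_2=\sho_{S'}[-2]$ and checking the resulting identifications $B_1=(E_8')^*$, $A_2=E_8^*$ together with the required cohomology vanishings. The only places to be careful are exactly the ones you flag in your last paragraph, and they go through as you describe.
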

\begin{proof}
    This can be proved in a similar way as in \cref{V_10 are dual to each other}.
\end{proof}
%\begin{assumption}
%    Using K\"unneth formula for the map $(E_8')^*\rightarrow \mathcal{E}^*\rightarrow E_8$, we get an isomorphism $H^0(E_8')^*\cong H^0(E_8)$.
%\end{assumption}
We use $Z'$ to denote the projective bundle $\bp_{S'}(E_8'^*)$ and use $\sho(-\eta)$ to denote the relative tautological subbundle.
\begin{proposition}
    There exists an embedding $Z'=\bp_{S'}(E_8'^*)$ into $\bp(H^0(S,E_8))$ using the line bundle $\sho(\eta)$. In particular, the vector bundle $E_8'$ is globally generated.
\end{proposition}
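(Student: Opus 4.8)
The plan is to recognize the asserted map as the one attached to the complete linear system $|\sho(\eta)|$ and to read off its target through the preceding lemma. Since $\sho(-\eta)\subset\pi^*E_8'^*$ is the relative tautological subbundle of $\pi\colon Z'=\bp_{S'}(E_8'^*)\to S'$, the line bundle $\sho(\eta)$ is the relative hyperplane class, so $\pi_*\sho(\eta)=E_8'$ and hence $H^0(Z',\sho(\eta))=H^0(S',E_8')$. The complete linear system therefore produces a (rational) map $Z'\dashrightarrow\bp\bigl(H^0(S',E_8')^*\bigr)$, and the preceding lemma gives $H^0(S',E_8')^*\cong H^0(S,E_8)$, so the target is exactly $\bp(H^0(S,E_8))$. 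Concretely, this map is the fibrewise incarnation of the universal morphism $p_2^*E_8'^*\to p_1^*E_8$ over $S\times S'$ obtained by composing $p_2^*E_8'^*\to\shE^*$ with $\shE^*\to p_1^*E_8$.

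First I would settle base-point-freeness, which is precisely the global generation of $E_8'$ claimed in the statement. Pushing the universal map forward along $p_2$ gives a bundle map $\bar\mu\colon E_8'^*\to H^0(S,E_8)\otimes\sho_{S'}$ on $S'$, whose dual is, after the identification $H^0(S,E_8)^*\cong H^0(S',E_8')$ of the preceding lemma, the evaluation map of $E_8'$ (the same K\"unneth-and-adjunction compatibility that proves that lemma). Thus $E_8'$ is globally generated exactly when $\bar\mu$ is a fibrewise injection. Using that the fibre of $E_8'^*$ over $s'$ is $H^0(S,E_{s'})$, as recalled above in this section, the fibre of $\bar\mu$ over $s'\in S'=M_S(3,h,5)$ is $H^0(\alpha_{s'})\colon H^0(S,E_{s'})\to H^0(S,E_8)$, where $\alpha_{s'}\colon E_{s'}\to E_8$ is the unique nonzero homomorphism (uniqueness being the symmetric version of the identity $\dim(\Hom(E',E_8'))=1$ established before \cref{maintheorem}). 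Since $E_{s'}$ and $E_8$ are slope-stable with $\mu(E_{s'})=\tfrac13(h,h)>\tfrac18(h,h)=\mu(E_8)$, a nonzero $\alpha_{s'}$ cannot have nonzero kernel: its image would then be both a proper quotient of $E_{s'}$, of slope $>\tfrac13(h,h)$, and a proper subsheaf of $E_8$, of slope $<\tfrac18(h,h)$, which is impossible. Hence $\alpha_{s'}$ is injective, $H^0(\alpha_{s'})$ is injective by left-exactness, and $\bar\mu$ is a subbundle inclusion. This shows $E_8'$ is globally generated, that $Z'\to\bp(H^0(S,E_8))$ is a morphism, and that it restricts to a linear embedding $\bp^7\hookrightarrow\bp(H^0(S,E_8))$ on each fibre of $\pi$.

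Since $Z'$ is proper it remains to upgrade this to a closed immersion, i.e.\ to prove the morphism is injective and unramified. Along the fibres both hold by the previous paragraph, so the content is to separate points lying in distinct fibres and to control tangent vectors in the base direction. As with the embedding $X\hookrightarrow\bp(V_8')$ of \cref{3H-h embedding} and the isomorphism of \cref{maintheorem}, I would argue by reconstructing $(s',\ell)$ from the image line. A point of $Z'$ is a pair $(s',\ell)$ with $\ell=\bc\tau$, $\tau\in H^0(S,E_{s'})$, sent to the line spanned by $\sigma=\alpha_{s'}(\tau)\in H^0(S,E_8)$, whose image lies in the rank $3$ Mukai subbundle $\operatorname{im}(\alpha_{s'})\subset E_8$. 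Thus injectivity reduces to the claim that $\operatorname{im}\sigma$ lies in a \emph{unique} rank $3$ Mukai subbundle $E_{s'}\hookrightarrow E_8$ of Mukai vector $(3,h,5)$: if $\sigma$ factored through both $E_{s'_1}$ and $E_{s'_2}$, comparing these two stable subbundles of $E_8$ through $\operatorname{im}\sigma$ via the one-dimensionality of the spaces $\Hom(E_{s_i'},E_8)$ should force $s'_1=s'_2$, after which $\tau_1=\tau_2$ up to scalar since $\alpha_{s'}$ is injective, giving $\ell_1=\ell_2$. For the generic (birational) reconstruction one can alternatively recover $E_{s'}$ from the length $c_2(E_{s'})=13$ zero locus of $\tau$, which agrees with that of $\sigma$ wherever $\alpha_{s'}$ is a subbundle, arguing torsion-freeness of $\operatorname{coker}(\alpha_{s'})$ as in \cref{generic injective}.

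I expect this last step---uniqueness of the rank $3$ Mukai subbundle through the image section, together with the parallel unramifiedness in the base direction---to be the main obstacle. It is the only part that is not formal and that genuinely uses the geometry of $E_8$ and of the family $\{E_{s'}\hookrightarrow E_8\}_{s'\in S'}$, exactly as the injectivity arguments did in \cref{3H-h embedding,maintheorem}; everything before it (the identification of the linear system, the computation of the target, and global generation) is bundle-theoretic and follows from the slope estimate above.
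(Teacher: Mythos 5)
Your identification of the morphism with the one given by $|\sho(\eta)|$, the computation of its target via $H^0(S',E_8')^*\cong H^0(S,E_8)$, and the proof of base-point-freeness by showing that composition with the unique nonzero map $\alpha_{s'}\colon E_{s'}\to E_8$ is injective on global sections all match the first half of the paper's proof. One correction there: the Mukai vector of $E_8$ is $(8,3h,17)$ (only this has square $-2$; the $(8,h',17)$ in Section 3 is a typo), so $\mu(E_8)=\tfrac38(h,h)>\tfrac13(h,h)=\mu(E_{s'})$. With your inequality $\mu(E_{s'})>\mu(E_8)$ there would be \emph{no} nonzero maps $E_{s'}\to E_8$ at all, so your own setup is inconsistent. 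The injectivity of $\alpha_{s'}$ still holds, but it needs the integrality refinement of \cref{generic injective}: a proper quotient of $E_{s'}$ sitting inside $E_8$ would have slope strictly between $\tfrac13(h,h)$ and $\tfrac38(h,h)$, which no sheaf of rank $1$ or $2$ with $c_1\in\bz h$ can achieve.

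The genuine gap is in the second half, which you flag but do not close. For separation of points in distinct fibres, the paper does not ``compare the two subbundles through $\operatorname{im}\sigma$''; it quotes \cref{maintheorem}: for any two distinct $s_1',s_2'$ the map $E_1\oplus E_2\to E_8$ in the partner version of (\ref{K' ses}) is injective, whereas a section $\sigma$ factoring as $\alpha_1\circ\tau_1=\alpha_2\circ\tau_2$ would make $(\tau_1,-\tau_2)\colon\sho_S\to E_1\oplus E_2$ a nonzero map killed by $E_1\oplus E_2\to E_8$. Your alternative via the length-$13$ zero locus of $\tau$ is only generic and cannot yield a closed immersion. For injectivity of the tangent map in the base direction you offer nothing, and this step is not formal: in the paper a tangent vector of $S'$ at $s'$ is a nontrivial extension $0\to E\to B\to E\to 0$ with $B$ the inverse Fourier--Mukai transform of a non-reduced length-$2$ structure sheaf; a tangent vector of $Z'$ lifting it gives a map $\sho_S\to B$ over $\sigma'$, hence a $2$-dimensional subspace $U_2\subset H^0(S,B)$, and one checks that $U_2$ has a well-defined $2$-dimensional image in $H^0(S,E_8)$ under any extension $B\to E_8$ of the inclusion $E\hookrightarrow E_8$ provided by \cref{maintheorem}. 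Without these two steps you only have a morphism that is an embedding on each $\bp^7$-fibre, not an embedding of $Z'$.
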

\begin{proof}
    We consider the map $\Hom(\sho,E)\otimes \Hom(E,E_8)\rightarrow \Hom(\sho,E_8)$, where $[E]=s'\in S'=M_{S}(3,h,5)$. This map can be obtained as follows: we consider the map $\sho(-\eta)\hookrightarrow E_8'^*\rightarrow \shE^* \rightarrow E_8$ over $Z\times S$ and do the pushforward to the first factor to get the map $\sho(-\eta)\hookrightarrow E_8'^*\rightarrow H^0(S,E_8)$. The latter map $E_8'^*\rightarrow H^0(S,E_8)$ parametrizes the family of map $\Hom(\sho,E)\otimes \Hom(E,E_8)\rightarrow \Hom(\sho,E_8)$ because $\Hom(\sho,E)=E_8'^*|_{s'}$ and $p_{2*}(\shE\otimes p_1^*E_8)=\sho_{S'}[0]$.

    Actually, the composition of $\sho(-\eta)\hookrightarrow E_8'^*\rightarrow H^0(S,E_8)$ exhibits $\sho(-\eta)$ as a subbundle of $H^0(S,E_8)$. To prove this, we only need to show that the composition of any nonzero map $\sho\rightarrow E$ with any nonzero map $E\rightarrow E_8$ remains nonzero. This comes from the fact that any nonzero map $E\rightarrow E_8$ is injective by the stability of $E$ and $E_8$.
    
    Consequently, we have an induced morphism from $Z'=\bp_{S'}(E_8'^*)$ to $\bp(H^0(S,E_8))$. The morphism is injective because if there exists a nonzero map $\sho\rightarrow E_8$ which could factor through two different $E_1,E_2$ corresponding to $s_1',s_2'\in S$, then the first nonzero map in (\ref{K' ses}) would not be injective. 

    We consider the maps between tangent spaces induced by $Z'\rightarrow\bp(H^0(S,E_8))$. Let $z'=(s',\sigma')$ be a point of $Z'$, with $s'=[E]\in S'\cong M_{S}(3,h,5)$ and $\bc.\sigma'\subset H^0(S,E)=E'^*_{8,s'}$. A nonzero tangent vector $v_{s'}$ at $s'\in S'$ corresponds to a nontrivial extension 
\[\begin{tikzcd}
    0  \arrow{r}&E\arrow{r}\ar[dr,hook]& B\arrow{r}\ar[d,dashed] & E \arrow{r}& 0,\\
    &&E_8&&
\end{tikzcd}
\]
    where $B$ is equal to the Fourier-Mukai transform $(\phi_\shE)^{-1}[-2]$ of the structure sheaf of a length 2 subscheme centered at $s'\in S'$. A nonzero tangent vector at $z'\in Z'$, whose image via the $\bp^7$-bundle map equals $v_{s'}$, induces a nonzero map (via the Fourier-Mukai transform) $\sho_S=(\phi_\shE)^{-1}(E_8')[-2]\rightarrow B$ such that the composition $\sho_S\rightarrow B\rightarrow E$ is given by $\bc.\sigma'\subset H^0(S,E)$.
    Let $U_2$ be a two-dimensional subspace of $H^0(S,B)$ such that $U_2$ contains the image of $\bc.\sigma'$ and the image of $U_2$ in $H^0(S,E)$ is equal to $\bc.\sigma'$. Pick an extension $B\hookrightarrow E_8$ of the injection $E\hookrightarrow E_8$ as in \cref{maintheorem}, which induces an injection $H^0(S,B)\hookrightarrow H^0(S,E_8)$. There exists different extensions $B\rightarrow E_8$ of the injection $E\hookrightarrow E_8$, but one can easily check that the images of $U_2$ under any induced maps $H^0(S,B)\rightarrow H^0(S,E_8)$ are the same and in particular are 2-dimensional. Hence we see that the tangent maps are injective and $Z'\rightarrow\bp(H^0(S,E_8))$ is an embedding.
\end{proof}

\begin{lemma}
    The map $V_8'\otimes F^*=\Hom(F^*,E_8^*)\otimes F^*\rightarrow E_8^*$ is surjective away form finitely many points.
\end{lemma}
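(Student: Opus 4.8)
The plan is to prove the dual statement and then translate surjectivity into a torsion condition. Under the identifications $V_8'=\Hom(F^*,E_8^*)=\Hom(E_8,F)$, the map in question is the dual of the evaluation $\ev\colon E_8\to V_8'^*\otimes F\cong F^{\oplus 8}$, which is exactly the bottom-right injective arrow of the commutative diagram above. Writing $0\to E_8\xrightarrow{\ev}F^{\oplus 8}\to C\to 0$ and applying $\mathcal{H}om(-,\sho_S)$ gives $\operatorname{coker}(V_8'\otimes F^*\to E_8^*)=\mathcal{E}xt^1(C,\sho_S)$, whose support is precisely the support of the pure one-dimensional part of $\mathrm{Tor}(C)$. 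Hence the lemma is equivalent to the assertion that $\mathrm{Tor}(C)$ is supported in dimension $0$, and this is what I would aim to prove.

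First I would record the numerics (using $h^2=30$): $\mathrm{rk}(E_8)=8$, $c_1(E_8)=3h$ and $\mu(E_8)=\tfrac{90}{8}<15=\mu(F)$, while $F^{\oplus 8}$ is polystable of rank $16$ and slope $15$, with $c_1=8h$. Suppose for contradiction that $\mathrm{Tor}(C)$ had a nonzero pure one-dimensional part, supported on an effective divisor $D$; since $\Pic(S)=\bz h$ we have $[D]=dh$ with $d\ge 1$. Let $\widehat{E_8}\subset F^{\oplus 8}$ be the saturation of the image of $\ev$: it is a rank-$8$ subsheaf with torsion-free quotient and $c_1(\widehat{E_8})=3h+[D]=(3+d)h$. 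Semistability of $F^{\oplus 8}$ forces $\mu(\widehat{E_8})\le 15$, i.e. $(3+d)\cdot\tfrac{30}{8}\le 15$, hence $d=1$, $[D]=h$ and $\mu(\widehat{E_8})=15$.

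The decisive step is to turn this borderline case into a contradiction. With equal slopes, both $\widehat{E_8}$ and $F^{\oplus 8}/\widehat{E_8}$ are semistable of slope $15$, so $0\to\widehat{E_8}\to F^{\oplus 8}\to F^{\oplus 8}/\widehat{E_8}\to 0$ lives in the abelian category $\mathcal{C}_{15}$ of slope-$15$ semistable sheaves. Since $F$ is stable it is a simple object of $\mathcal{C}_{15}$, so $F^{\oplus 8}=F\otimes V_8'^*$ is semisimple and its subobject $\widehat{E_8}$ must be a direct summand isomorphic to $F^{\oplus k}$; comparing ranks gives $k=4$, so $\widehat{E_8}=F\otimes U$ for a four-dimensional $U\subset V_8'^*$. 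Under the identification $F\otimes V_8'^*=\Hom(V_8',F)$ this summand is $\{\psi:\psi|_{U^\perp}=0\}$ with $\dim U^\perp=4$, so the containment $\operatorname{im}(\ev)\subseteq F\otimes U$ says that every $\phi\in U^\perp\subset V_8'=\Hom(E_8,F)$ satisfies $\phi_s(e)=0$ for all $s\in S$ and all $e\in(E_8)_s$ — that is, every element of the four-dimensional space $U^\perp$ is the zero homomorphism, which is absurd. This contradiction shows $\mathrm{Tor}(C)$ is zero-dimensional and proves the lemma.

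I expect this last step to be the main obstacle: the slope inequality only narrows the putative divisorial degeneracy to $[D]=h$ and cannot exclude it by numerics alone, so one really needs the polystability of $F^{\oplus 8}$ together with the semisimplicity of $F\otimes V_8'^*$ in $\mathcal{C}_{15}$ to force $\widehat{E_8}$ to be a coordinate summand $F\otimes U$ and thereby read off the contradiction.
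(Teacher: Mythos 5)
Your argument takes a genuinely different route from the paper's, and most of it is sound, but there is one real gap at the very first step: the injectivity of $\ev\colon E_8\to V_8'^*\otimes F$. You justify it by pointing at the hooked arrow in the commutative diagram, but the paper only \emph{proves} that hook later, in the proof of the exactness of (\ref{E8 ses}), and it does so precisely by quoting the present lemma (``the latter map is surjective away from finitely many points, thus $E_8\to V_8'^*\otimes F$ is injective''). So as written your proof is circular. The injectivity is not a formality for your method: if $\ev$ had a kernel, the saturation $\widehat{I}$ of its image would have rank $r<8$, the slope window $\mu(E_8)<\mu(\widehat{I})\le\mu(F)$ would no longer force $\mu(\widehat{I})=15$ (for instance $r=7$, $c_1=3h$ gives $\mu=90/7$ and is numerically allowed), and the semisimplicity step, which only bites at slope exactly $15$, would not apply. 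Your $U^\perp$ mechanism could in principle absorb this case as well --- it suffices to bound $\dim\Hom(\widehat{I},F)<8$ --- but for a lower-rank, possibly non-semistable $\widehat{I}$ that bound requires a separate argument which you do not supply.

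The rest is correct in substance, modulo one standard imprecision: the category of slope-$15$ semistable sheaves is not abelian (the cokernel of $\mathcal{I}_p\hookrightarrow\sho_S$ already leaves it), so ``$\widehat{E_8}$ is a direct summand isomorphic to $F^{\oplus 4}$'' is too strong --- in general $\widehat{E_8}$ is only an elementary modification of $F\otimes U$. What you actually use, namely $\widehat{E_8}\subseteq F\otimes U$ with $U$ a proper subspace, does hold: a $\mu$-Jordan--H\"older d\'evissage of the saturated slope-$15$ subsheaf $\widehat{E_8}$ gives $\dim\Hom(\widehat{E_8},F)\le 4$, hence $U^\perp=\ker\bigl(V_8'\to\Hom(\widehat{E_8},F)\bigr)\ne 0$, and any nonzero $\lambda\in U^\perp$ is a nonzero element of $\Hom(E_8,F)$ acting as the zero map --- your contradiction. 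For comparison, the paper avoids the coevaluation altogether: it restricts $V_8'\otimes F^*\to E_8^*$ along the sequence $0\to K^*\to E_8^*\to E_1^*\oplus E_2^*\to 0$ of (\ref{dual K ses}) for two general points of $S'$, imports the surjectivity of $F^*\otimes\Hom(F^*,E_i^*)\to E_i^*$ from the stability of Mukai's quiver representations, and then shows by stability of the kernels $\Ker_i$ that their images already fill $K^*$ up to finitely many points. Your approach, once the injectivity is supplied non-circularly, has the merit of not needing the general-position input from Mukai's model or the exact sequence from \cref{maintheorem}.
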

\begin{proof}
    We pick two general points $[E_1],[E_2]\in S'$ and consider the corresponding short exact sequence as in (\ref{K' ses}):
    \begin{equation}
        0\rightarrow K^*\rightarrow E_8^*\rightarrow E_1^*\oplus E_2^*\rightarrow 0.\label{dual K ses}
    \end{equation}
    In our situation, $K$ is locally free and $\Ext^j(K,F)=0$ for $j=0,1,2$. Consequently, the composition $V_8'\otimes F^*\rightarrow E_8^*\rightarrow E_1^*\oplus E_2^*$ splits into the direct sum of two maps $F^*\otimes\Hom(F^*,E_i^*)\rightarrow E_i^*$.
     When $[E_i]\in S'$ is general, the map $F^*\otimes\Hom(F^*,E_i^*)\rightarrow E_i^*$ is surjective because this holds for the $[E_i]$ appearing in \cite{Mukai16} by the stability condition for representations of the 4-Kronecker quiver. Consequently we have two short exact sequences:
     $$0\rightarrow \Ker_i\rightarrow F^*\otimes\Hom(F^*,E_i^*)\rightarrow E_i^*\rightarrow 0,$$
     with $\Hom(F^*,\Ker_i)=0$. The vector bundle $\Ker_i$ must be slope stable. Otherwise, by the semistability of $F^{*\oplus 4}$, there would exist a slope semistable subbundle of $\Ker_i$ with slope equal to $-\frac{1}{2}(h,h)$, which would lead to $\Hom(F^*,\Ker_i)\neq 0$.

     The map $V_8'\otimes F^*\rightarrow E_8^*$ induces maps $\Ker_i\rightarrow K^*$ for $i=1,2$. If both induced maps are zero, then (\ref{dual K ses}) would split, which is absurd. Now we only have to observe that any nonzero map $\Ker_i\rightarrow K^*$ is generically surjective by stability. Moreover, the first Chern class of the rank 2 image in $K^*$ must be $-h$, which proves the Lemma.
\end{proof}
We come to understand the unique nontrivial 2-form $\omega'\in \wedge^2V_8'$ (up to scalar) lying in the kernel of the map $\wedge^2V_8'=\wedge^2H^0(S,G_4')\rightarrow H^0(S,\wedge^2G_4')$. By the fully faithfulness of the Fourier-Mukai transform $\phi_\shE$, we know $V_8'=\Hom(E_8,F)$, giving rise to a map $V_8'\rightarrow E_8^*\otimes F$. Taking wedge product and using Schur functor, we get a map $\wedge^2V_8'\rightarrow \wedge^2(E_8^*\otimes F)\rightarrow S^2E_8^*\otimes \wedge^2F=S^2E_8^*(h)$.
\begin{proposition}
    The 2-form $\omega'$ lies in the kernel of $\wedge^2V_8'\rightarrow H^0(S^2E_8^*(h))$. \label{omega 2 form}
\end{proposition}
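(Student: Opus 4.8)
The plan is to move the statement to the universal picture over $S\times S'$. The identification $V_8' = \Hom_S(E_8,F) = H^0(S',G_4')$ underlying the proposition comes from full faithfulness of $\phi_\shE$ together with $\phi_\shE(F) = G_4'$ and $\phi_\shE(E_8) = \sho_{S'}$ (the Mukai vector of $\phi_\shE(E_8)$ is $(1,0,1)$, forcing it to be $\sho_{S'}$). In particular $\Hom_{S\times S'}(\shE^*,p_1^*E_8) = H^0(S',\sho_{S'}) = \bc$, so a generator $\iota\colon\shE^*\hookrightarrow p_1^*E_8$ restricts over each $s'\in S'$ to the canonical injection $\iota_{s'}\colon E_{s'}\hookrightarrow E_8$; under the identification above a section $\sigma$ of $G_4'$ corresponds to $\phi\in\Hom_S(E_8,F)$ with value $\phi\circ\iota_{s'}\in G_4'|_{s'} = \Hom_S(E_{s'},F)$ at $s'$. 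Thus the defining property of $\omega' = \sum_k\phi^k\wedge\psi^k$ — that its image in $H^0(S',\wedge^2G_4')$ vanishes — translates into $\sum_k(\phi^k\circ\iota_{s'})\wedge(\psi^k\circ\iota_{s'}) = 0$ in $\wedge^2\Hom_S(E_{s'},F)$ for every $s'\in S'$.

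Next I would unwind the map of the proposition. It sends $\phi\wedge\psi$ to the $S^2$-isotypic part of $\phi\otimes\psi$, i.e.\ to the homomorphism $S^2E_8\to\wedge^2F = \sho_S(h)$ given by $e_1e_2\mapsto\phi(e_1)\wedge\psi(e_2)+\phi(e_2)\wedge\psi(e_1)$; write $\Theta\in H^0(S,S^2E_8^*(h))$ for the image of $\omega'$. Applying the cup-product map $\wedge^2\Hom_S(E_{s'},F)\to H^0(S,\wedge^2(E_{s'}^*\otimes F))$ to the vanishing just obtained and projecting onto the $S^2E_{s'}^*\otimes\wedge^2F$ summand shows that $\Theta\circ S^2(\iota_{s'}) = 0$ for every $s'$; globalising, $p_1^*\Theta$ annihilates the image of $S^2\iota\colon S^2\shE^*\to p_1^*S^2E_8$ over $S\times S'$. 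The statement is therefore equivalent to $\Theta = 0$. (A Riemann--Roch computation is reassuring: from $v(E_8) = (8,3h,17)$ one gets $\chi(S^2E_8^*(h)) = 27 = \chi(\wedge^2G_4')$, so — granting the expected vanishing $H^1 = H^2 = 0$ for both bundles — each of the two maps out of the $28$-dimensional $\wedge^2V_8'$ has a one-dimensional kernel, and the proposition says these two lines coincide.)

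I expect the crux to be this last point, since the reduction above does not close the gap by itself: over a general $x\in S$ one only learns that $\Theta$ vanishes on the two-parameter family of $3$-planes $\mathrm{im}\big((\iota_{s'})_x\big)\subset(E_8)_x$, equivalently on the subspaces $S^2\,\mathrm{im}\big((\iota_{s'})_x\big)\subset S^2(E_8)_x$, and these do not span — their common annihilator in $S^2(E_8)_x^*$ is the space of global sections of a rank $30$ bundle on $S'$ of positive Euler characteristic — so vanishing there is not formally enough to force $\Theta = 0$. What is left is a finite-dimensional linear-algebra assertion about a single genus $16$ K3 surface; since Mukai's construction yields a locally complete (hence dense) family while ``$\omega'\mapsto 0$'' is a Zariski-closed condition, it suffices to verify it on one explicit model built inside the quiver moduli space $\scht$. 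Concretely one computes the line $\bc\omega' = \ker\big(\wedge^2V_8'\to H^0(S',\wedge^2G_4')\big)$ and checks that it lies in $\ker\big(\wedge^2V_8'\to H^0(S,S^2E_8^*(h))\big)$; this is carried out by computer in Macaulay2.
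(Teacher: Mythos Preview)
Your argument is correct, and up to the point where you obtain $\Theta\circ S^2(\iota_{s'})=0$ for all $s'$ it is exactly the paper's argument, just written fibrewise over $S'$ rather than over $S$. The divergence comes at the next step, and here you actually throw away the very ingredient the paper uses to finish.

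Your ``common annihilator in $S^2(E_8)_x^*$'' is nothing but the kernel of the natural map $S^2 H^0(S',E')\to H^0(S',S^2E')$, where $E'=\mathcal{E}|_{\{x\}\times S'}$; equivalently it is the fibre at $x$ of the comparison map $S^2(p_{1*}\mathcal{E})\to p_{1*}(S^2\mathcal{E})$. You compute $\chi$ of the rank-$30$ kernel bundle to be $35>0$ and infer that $h^0>0$, hence that the $3$-planes do not span. But $\chi>0$ does not force $h^0>0$, and in fact the paper's computer check is precisely that this map is \emph{injective} for one (hence a general) $x$, i.e.\ that your common annihilator is zero. Once that is known, your own reduction already gives $\Theta_x=0$ for general $x$, hence $\Theta=0$ since $S^2E_8^*(h)$ is locally free.

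So both proofs end with a Macaulay2 verification on a single Mukai model, but the paper checks the cleaner and more conceptual statement ``$S^2H^0(E')\to H^0(S^2E')$ is injective'' (a linear-algebra fact about a single rank-$3$ bundle on $S'$), whereas you compute $\omega'$ and its image directly. Your route is valid; the paper's is the same route carried one step further, and its residual computer check is both smaller and more likely to admit a conceptual proof later.
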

\begin{proof}
    Consider the map $V_8'\rightarrow G_4'\rightarrow\mathcal{E}\otimes F$ over $S\times S'$. Taking the pushforward to $S$, we get $V_8'\rightarrow E_8^*\otimes F$. By definition, we know that $\omega'$ lies in the kernel of $\wedge^2V_8'\rightarrow\wedge^2 G_4'\rightarrow \wedge^2(\mathcal{E}\otimes F)$ and thus in the kernel of $\wedge^2V_8'\rightarrow p_{1*}(S^2\mathcal{E})(h)$. We have a comparison map $S^2E_8^*(h)=S^2(p_{1*}\shE)(h)\rightarrow p_{1*}(S^2\mathcal{E})(h)$ over an open subset of $S$. Using algorithms designed by Frédéric Han, one can find an example such that the map $S^2H^0(S',E')\rightarrow H^0(S',S^2E')$ is injective, which shows that the previous comparison map is an injective sheaf map.
    The statement of the proposition follows from the fact that $\omega'$ is in the kernel of $\wedge^2V_8'\rightarrow p_{1*}(S^2\mathcal{E})(h)$.
\end{proof}
  \begin{remark}
     We could probably redefine $\bc.\omega'\subset\wedge^2V_8'$ as the kernel of $\wedge^2V_8'\rightarrow H^0(S^2(E_8^*)(h))$ (notice that $\chi(S^2(E_8^*)(h))=27$). This is in parallel with the fact that the symplectic form on $V_8'$ is the kernel of the composition of $\wedge^2V_8'\rightarrow H^0(\wedge^2(\sho_{S'}^*\otimes G_4'))\rightarrow H^0(S^2(\sho_{S'}^*)\otimes \wedge^2G_4')$. \label{2form'}
 \end{remark}
 We have the map $E_8\rightarrow V_8'^*\otimes F$ and we can take the dual of it to get $V_8'\otimes F^*\rightarrow E_8^*$. Tensoring the latter map with $\sho_S(h)=\det(F)$ and using the isomorphism $V_8'\cong V_8'^*$ given by $\omega'$, we get a map $V_8'^*\otimes F \rightarrow E_8^*(h)$.
 \begin{proposition}
     The following sequence is exact:
      \begin{equation}
 0\rightarrow E_8\rightarrow V_8'^*\otimes F \rightarrow E_8^*(h)\rightarrow0.\label{E8 ses}   
 \end{equation}
 \end{proposition}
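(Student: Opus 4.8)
The plan is to work over $S$. Write $a\colon E_8\to V_8'^*\otimes F$ for the evaluation map $E_8\to\Hom(E_8,F)^*\otimes F$ (the first arrow) and $b\colon V_8'^*\otimes F\to E_8^*(h)$ for the second arrow, which is obtained from the dual $a^\vee\colon V_8'\otimes F^*\to E_8^*$ by twisting with $\det F=\sho_S(h)$, identifying $F^*(h)$ with $F$, and precomposing with the isomorphism $V_8'^*\cong V_8'$ induced by $\omega'$. Unwinding the definitions, $a^\vee$ is, up to this twist and change of basis, exactly the evaluation map $\Hom(F^*,E_8^*)\otimes F^*\to E_8^*$ that the preceding lemma shows to be surjective away from finitely many points; hence both $a^\vee$ and $b$ are generically surjective and share a common degeneracy locus $\Sigma\subset S$ consisting of finitely many points.

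First I would check that $(\ref{E8 ses})$ is a complex, i.e. $b\circ a=0$. Regard $b\circ a$ as an element of $\Hom(E_8,E_8^*(h))=H^0(S,S^2E_8^*(h))\oplus H^0(S,\wedge^2E_8^*(h))$. A direct computation from the construction of $b$ shows that at a point $s$ it sends $(e,e')\in E_8|_s\times E_8|_s$ to $\sum_{j,k}c_{jk}\,\phi_j|_s(e)\wedge\phi_k|_s(e')\in\wedge^2F|_s=\sho_S(h)|_s$, where $\{\phi_j\}$ is a basis of $V_8'=\Hom(E_8,F)$ and $(c_{jk})$ are the antisymmetric coefficients of $\omega'$ in this basis. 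Because $(c_{jk})$ and the pairing $F\otimes F\to\wedge^2F$ are both antisymmetric, this expression is symmetric in $e,e'$, so $b\circ a\in H^0(S,S^2E_8^*(h))$ and it equals, up to a nonzero scalar, the image of $\omega'$ under the map $\wedge^2V_8'\to H^0(S,S^2E_8^*(h))$ of \cref{omega 2 form}. Since that proposition says $\omega'$ lies in the kernel of this map, $b\circ a=0$.

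Next, $a$ is injective: $\ker a$ is a subsheaf of the locally free sheaf $E_8$, hence torsion-free, and it has rank $0$ because $a^\vee$ is generically surjective on fibres, so $a$ is generically injective on fibres; thus $\mathrm{Im}(a)\cong E_8$. By the previous step $E_8=\mathrm{Im}(a)\subseteq\ker b$, and off $\Sigma$ both $a$ and $b$ are bundle maps of the expected ranks with $\mathrm{Im}(a)|_s=\ker(b)|_s$, so $\ker b/E_8$ has finite length. A standard reflexivity argument now upgrades this to $\ker b=E_8$: the double dual $(\ker b)^{**}$ is locally free of rank $8$ and contains $E_8$ with finite-length quotient $T'$; applying $\deriver\sheafhom(\,\cdot\,,\sho_S)$ and using $\mathcal{E}xt^1(T',\sho_S)=0$ (as $T'$ is $0$-dimensional on the smooth surface $S$) and $\mathcal{E}xt^1(E_8,\sho_S)=0$ gives $((\ker b)^{**})^*\cong E_8^*$, hence $(\ker b)^{**}\cong E_8$, and comparing Euler characteristics forces $T'=0$, so $\ker b=E_8$. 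Finally, from $0\to\ker b\to V_8'^*\otimes F\to\mathrm{Im}(b)\to0$ and the Mukai vector of $E_8$ computed earlier we obtain $v(\mathrm{Im}(b))=8\,v(F)-v(E_8)=v(E_8^*(h))$; since $\mathrm{Im}(b)\subseteq E_8^*(h)$ and the two agree off $\Sigma$, the cokernel $E_8^*(h)/\mathrm{Im}(b)$ is a finite-length sheaf of vanishing Mukai vector, hence zero, so $b$ is surjective and $(\ref{E8 ses})$ is exact.

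The step I expect to be the main obstacle is the complex property: identifying the map implicit in $b\circ a$ with the map of \cref{omega 2 form} is the only genuinely delicate point, and it hinges on the antisymmetry of the pairing $F\otimes F\to\wedge^2F$ matching the antisymmetry of $\omega'$. Once that is settled, the injectivity, the equality $\ker b=E_8$, and the surjectivity of $b$ are routine purity, reflexivity, and Mukai-vector bookkeeping.
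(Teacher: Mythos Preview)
Your proof is correct and follows essentially the same line as the paper's. Both arguments use \cref{omega 2 form} to get $b\circ a=0$, the preceding lemma for generic surjectivity of $b$, reflexivity of $\ker b$ to identify it with $E_8$, and a Chern/Mukai-vector count to conclude surjectivity of $b$. The paper's proof is simply terser: it observes that $\ker b$, as the kernel of a map between locally free sheaves on a smooth surface, is automatically reflexive (hence locally free), so the inclusion $E_8\hookrightarrow\ker b$ with finite-length cokernel is immediately an isomorphism---your $\mathcal{E}xt^1$ computation reaches the same conclusion but is not needed. Your explicit unpacking of $b\circ a$ as the symmetric form $\sum_{j,k}c_{jk}\,\phi_j(e)\wedge\phi_k(e')$ and its identification with the image of $\omega'$ under the map of \cref{omega 2 form} is exactly what the paper leaves implicit when it writes ``the composition is zero by \cref{omega 2 form}'', so the point you flagged as the main obstacle is in fact the one nontrivial step in both proofs.
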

\begin{proof}
Notice that $\mathrm{ch}(V_8'^*\otimes F)=\mathrm{ch}(E_8)+\mathrm{ch}(E_8^*(h))$. The composition of $E_8\rightarrow V_8'^*\otimes F \rightarrow E_8^*(h)$ is zero by \cref{omega 2 form}. We have seen that the latter map is surjective away from finitely many points. Thus $E_8\rightarrow V_8'^*\otimes F$ is injective and the image coincides with the kernel of $V_8'^*\otimes F \rightarrow E_8^*(h)$ except over finitely many points. Using the fact that both the kernel of $V_8'^*\otimes F \rightarrow E_8^*(h)$ and $E_8$ are reflexive, we can conclude that the above sequence is exact.
\end{proof}
% ($\chi(E_8^*(h))=\chi(\sho_{X}(4H-h))=55$ is equal to $\dim(S^2(\mathds{C}^{10}))$, is it just a coincidence?)
\begin{Corollary}
    For every $[E]\in M_S(3,h,5)$, the bundle map $F^*\otimes \Hom(F^*,E^*)\rightarrow E^*$ over $S$ is surjective.\label{stability condition (A)}
\end{Corollary}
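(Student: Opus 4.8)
The plan is to reduce this surjectivity to the exact sequence (\ref{E8 ses}) together with the fact that, for every $[E]\in M_S(3,h,5)$, the bundle $E$ sits inside $E_8$ as a subbundle. First I would dualize (\ref{E8 ses}): recalling $V_8'=\Hom(E_8,F)$, this gives the short exact sequence $0\rightarrow E_8(-h)\rightarrow \Hom(E_8,F)\otimes F^*\rightarrow E_8^*\rightarrow 0$, whose surjection is precisely the evaluation $\mathrm{ev}_{E_8}\colon\psi\otimes f^*\mapsto f^*\circ\psi$. So $\mathrm{ev}_{E_8}$ is surjective over $S$.

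Next, fix $[E]\in M_S(3,h,5)$ and let $s'\in S'$ be the corresponding point. Over $S\times S'$ the canonical map $p_1^*E_8^*=p_1^*p_{1*}\mathcal{E}\rightarrow\mathcal{E}$ restricts over each slice $\{s\}\times S'$ to $H^0(S',E')\otimes\sho_{S'}\rightarrow E'$, which is surjective because every member of $M_{S'}(3,h',5)$ is globally generated (the Fourier--Mukai counterpart of the corollary following \cref{stable rep}); moreover $Rp_{1*}\mathcal{E}=E_8^*[0]$ is locally free by \cref{vanishing E}. Hence $p_1^*E_8^*\rightarrow\mathcal{E}$ is a surjection of vector bundles, so its dual $\mathcal{E}^*\rightarrow p_1^*E_8$ is a subbundle inclusion; restricting to $S\times\{s'\}$ yields a subbundle inclusion $\iota\colon E\hookrightarrow E_8$ over $S$, and therefore its dual $\iota^*\colon E_8^*\twoheadrightarrow E^*$ is surjective.

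Finally, precomposition with $\iota$ defines the restriction map $\rho\colon\Hom(E_8,F)\rightarrow\Hom(E,F)$, and one checks at once that $\iota^*\circ\mathrm{ev}_{E_8}=\mathrm{ev}_E\circ(\rho\otimes\mathrm{id}_{F^*})$ as maps $\Hom(E_8,F)\otimes F^*\rightarrow E^*$. The left-hand side is a composition of two surjections, hence surjective, so $\mathrm{ev}_E\colon\Hom(E,F)\otimes F^*\rightarrow E^*$ is surjective. Since $F^*\otimes\Hom(F^*,E^*)\cong\Hom(E,F)\otimes F^*$ and $\mathrm{ev}_E$ is the map in the statement, this is the Corollary.

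The step I expect to be the main obstacle is the middle one: verifying that $E\hookrightarrow E_8$ is fibrewise injective for \emph{every} $[E]\in M_S(3,h,5)$, not merely the general member, and being careful that the global-generation input invoked there does not itself depend on (A). If a more self-contained argument is wanted, one can instead reformulate (A) pointwise and show directly that $\bigcap_{\psi\in\Hom(E,F)}\ker(\psi_s\colon E_s\rightarrow F_s)=0$ for all $s\in S$.
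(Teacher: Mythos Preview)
Your overall strategy coincides with the paper's: both use (\ref{E8 ses}) to get the surjectivity of $\mathrm{ev}_{E_8}:V_8'\otimes F^*\to E_8^*$, and both then need a surjection $E_8^*\twoheadrightarrow E^*$ to conclude that $\mathrm{ev}_E$ is surjective. Your step~3, via naturality of evaluation, is a clean variant of the paper's direct-sum splitting argument.

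The obstacle you flag in step~2 is real and, as written, fatal. The global generation of \emph{every} $[E']\in M_{S'}(3,h',5)$ is obtained in the paper only as the Corollary following \cref{stable rep}; but \cref{stable rep} on $S'$ requires precisely condition (A) on $S'$. So your step~2 assumes (A) on $S'$ to prove (A) on $S$, and by symmetry the reverse implication would be needed as well: this is genuinely circular, and I do not see an independent argument for global generation of all $E'$ that avoids it. The paper sidesteps this by a different route to the surjectivity of $E_8^*\to E^*$: given $[E_1]=[E]$, i.e.\ a point $s_1'\in S'$, it invokes the $S'$-version of \cref{bl_sS}---whose proof is a divisor computation independent of (A)---to produce $s_2'\in S'$ with $\{s_1',s_2'\}\notin X\subset S'^{[2]}$. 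By \cref{maintheorem} and \cref{F' locally free} (on $S$), the corresponding $[K]\in M_S(2,h,7)$ is then locally free, so the sequence $0\to E_1\oplus E_2\to E_8\to K\to 0$ exhibits $E$ as a \emph{sub-bundle} of $E_8$, yielding the surjection $\iota^*:E_8^*\twoheadrightarrow E^*$ you need. Once you replace your global-generation input by this, your proof goes through; your alternative suggestion of showing $\bigcap_\psi\ker\psi_s=0$ directly is just a restatement of the surjectivity of $\mathrm{ev}_E$ and does not by itself give a new argument.
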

\begin{proof}
    We pick $[E_2]\in M_S(3,h,5)$ such that $E_1:=E$ and $E_2$ fit into (\ref{dual K ses}), with $K$ locally free and $\Ext^j(K,F)=0$ for $j=0,1,2$ (a choice of such $E_2$ is possible by the proof of \cref{bl_sS}). Again the composition $V_8'\otimes F^*\rightarrow E_8^*\rightarrow E_1^*\oplus E_2^*$ splits into the direct sum of two maps $F^*\otimes\Hom(F^*,E_i^*)\rightarrow E_i^*$. By the last Proposition, the map $V_8'\otimes F^*\rightarrow E_8^*$ is surjective, which immediately shows what we want.
\end{proof}
Now we take global sections of $G_4\otimes W_{10}'^*\rightarrow E_8\rightarrow V_8'^*\otimes F$ and get $V_8\otimes W_{10}'^*\rightarrow H^0(E_8)\cong H^0(E_8')^*\rightarrow V_8'^*\otimes W_{10}$. The composition map $V_8\otimes W_{10}'^*\rightarrow V_8'^*\otimes W_{10}$ essentially arises from the map $V_8'\otimes W_{10}'^*\rightarrow V_{10}'\cong V_{10}^*\rightarrow V_8^*\otimes W_{10}$.

 Recall that we have maps $X'\rightarrow\mathds{P}(V_8)$ and $X'\hookrightarrow \mathds{P}(W_{10}'^*)$. Taking product and composing with the Segre morphism, we get an embedding of $X'$ into $\mathds{P}(V_8\otimes W_{10}'^*)$.  
 \begin{proposition}
          The linear span of $X'$ in $\mathds{P}(V_8\otimes W_{10}'^*)$ is contained in the projectivization of $\mathrm{Ker}(V_8\otimes W_{10}'^*\rightarrow H^0(E_8))$.
 \end{proposition}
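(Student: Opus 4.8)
The plan is to reformulate the statement as the vanishing of a single map of sheaves on $S\times X'$, and to deduce that vanishing from the commutativity of the diagram over $S\times S'$ displayed above together with the exactness of the dual of (\ref{Totalfamily}). Throughout, let $p_S$ and $p_{X'}$ be the two projections of $S\times X'$, and let $\pi\colon X'\to S'$ be the structural map of $X'=\mathds{P}_{S'}(F'^*)$.

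Recall that $X'\to\mathds{P}(V_8)$ is given by the subbundle inclusion $\mathcal{O}_{X'}(h'-3H')\hookrightarrow V_8\otimes\mathcal{O}_{X'}$ (the intermediate factor of the injection $\mathcal{O}(h'-3H')\hookrightarrow G_4$ produced by the dual of (\ref{Totalfamily})), and that $X'\hookrightarrow\mathds{P}(W_{10}'^*)$ is given by $\mathcal{O}_{X'}(-H')\hookrightarrow W_{10}'^*\otimes\mathcal{O}_{X'}$. After composing with the Segre embedding, $X'$ inherits the decomposable line subbundle
\[
\mathcal{O}_{X'}(h'-4H')=\mathcal{O}_{X'}(h'-3H')\otimes\mathcal{O}_{X'}(-H')\;\hookrightarrow\;(V_8\otimes W_{10}'^*)\otimes\mathcal{O}_{X'},
\]
so the proposition is equivalent to the assertion that the composite $\mathcal{O}_{X'}(h'-4H')\hookrightarrow(V_8\otimes W_{10}'^*)\otimes\mathcal{O}_{X'}\to H^0(E_8)\otimes\mathcal{O}_{X'}$ vanishes, the second map being induced by $V_8\otimes W_{10}'^*\to H^0(E_8)$. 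By the adjunction $p_{X'}^{*}\dashv p_{X'*}$ and the base-change identity $p_{X'*}p_S^{*}E_8=H^0(E_8)\otimes\mathcal{O}_{X'}$, this composite corresponds to a map $\Phi\colon p_{X'}^{*}\mathcal{O}_{X'}(h'-4H')\to p_S^{*}E_8$ on $S\times X'$; unwinding the definitions one finds $\Phi=\beta'\circ\alpha$, where $\alpha\colon p_{X'}^{*}\mathcal{O}_{X'}(h'-3H')\hookrightarrow p_S^{*}G_4$ is the first map of the dual of (\ref{Totalfamily}) — so $\mathrm{Im}\,\alpha=\mathrm{Ker}\,\delta$ for the cokernel map $\delta\colon p_S^{*}G_4\to\mathcal{E}^{*}(H')$ — and $\beta'\colon p_S^{*}G_4\to p_S^{*}E_8\otimes p_{X'}^{*}\mathcal{O}_{X'}(H')$ is obtained by tensoring $p_S^{*}G_4$ with $\mathcal{O}_{X'}(-H')\hookrightarrow W_{10}'^*\otimes\mathcal{O}_{X'}$ and applying the evaluation $G_4\otimes W_{10}'^*\to E_8$. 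It therefore suffices to prove $\beta'\circ\alpha=0$, and for this it is enough to factor $\beta'$ through $\delta$.

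To obtain that factorization I would use the commutative diagram over $S\times S'$. The inclusion $\mathcal{O}_{X'}(-H')\hookrightarrow W_{10}'^*\otimes\mathcal{O}_{X'}$ factors through $\pi^{*}(F')^{*}\hookrightarrow W_{10}'^*\otimes\mathcal{O}_{X'}$, the pullback of the dual of the evaluation $W_{10}'\otimes\mathcal{O}_{S'}\twoheadrightarrow F'$; hence $\beta'$ is the pullback to $S\times X'$ of the composite $G_4\otimes(F')^{*}\hookrightarrow G_4\otimes W_{10}'^*\to E_8$ over $S\times S'$, precomposed with $\mathcal{O}_{X'}(-H')\hookrightarrow\pi^{*}(F')^{*}$. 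Now the square of the diagram over $S\times S'$ with vertices $G_4\otimes(F')^{*}$, $G_4\otimes W_{10}'^*$, $\mathcal{E}^{*}$ and $E_8$ (rows $2$--$3$, columns $1$--$2$) commutes, so this composite equals $G_4\otimes(F')^{*}\twoheadrightarrow\mathcal{E}^{*}\hookrightarrow E_8$, the first arrow being the adjunction map. On the other hand, (\ref{Totalfamily}) was constructed from $\mathcal{E}(-H')\hookrightarrow\mathcal{E}\otimes(F')^{*}\to G_4^{*}$, so dualizing it and precomposing the adjunction map $G_4\otimes(F')^{*}\to\mathcal{E}^{*}$ with $\mathcal{O}_{X'}(-H')\hookrightarrow\pi^{*}(F')^{*}$ recovers exactly $\delta$. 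Combining these identifications, $\beta'=\bigl(\mathcal{E}^{*}\hookrightarrow E_8\bigr)(H')\circ\delta$, whence $\Phi=\beta'\circ\alpha=\bigl(\mathcal{E}^{*}\hookrightarrow E_8\bigr)(H')\circ(\delta\circ\alpha)=0$ by the exactness of the dual of (\ref{Totalfamily}). This proves the proposition.

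The step I expect to cause the most trouble is the identification in the last paragraph: matching the adjunction map $G_4\otimes(F')^{*}\to\mathcal{E}^{*}$ that appears in the diagram over $S\times S'$ with the cokernel map $\delta$ of the dual of (\ref{Totalfamily}) on $S\times X'$, after keeping track of the various $\mathcal{O}(H')$-twists and of the pullback of $\mathcal{E}^{*}$ from $S\times S'$ to $S\times X'$. This is bookkeeping rather than anything conceptual, as is the verification that $p_{X'*}\Phi$ is literally the composite appearing in the second paragraph (cohomology and base change, and the projection formula); I would carry both out explicitly but do not anticipate any surprises.
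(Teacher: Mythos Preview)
Your proof is correct. Both your argument and the paper's ultimately rest on the exactness of the dual of (\ref{Totalfamily}) at $p_S^*G_4$, but the routes differ. The paper first replaces $H^0(E_8)$ by $V_8'^{*}\otimes W_{10}$ using the injectivity coming from (\ref{E8 ses}), then observes that the map $V_8\otimes W_{10}'^{*}\to V_8'^{*}\otimes W_{10}$ factors through $V_{10}'\cong V_{10}^{*}$, and finally invokes \cref{zero pairing} (whose proof already contains the factorization through~$\mathcal{E}^{*}$ that you use). You instead stay on $S\times X'$, lift the composite to a sheaf map into $p_S^{*}E_8$, and use the commutative square of the big diagram over $S\times S'$ to factor $G_4\otimes(F')^{*}\to E_8$ through~$\mathcal{E}^{*}$ directly. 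Your approach is a bit more self-contained: it needs neither the short exact sequence (\ref{E8 ses}) nor the passage through $V_{10}'$, at the cost of the bookkeeping with twists that you flag (which is indeed routine). The paper's route, on the other hand, reuses \cref{zero pairing} and thereby ties the statement back to the identification of the rank~$4$ subbundle in \cref{pullback bundle}.
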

 \begin{proof}
     We notice that $\mathrm{Ker}(V_8\otimes W_{10}'^*\rightarrow H^0(E_8))=\mathrm{Ker}(V_8\otimes W_{10}'^*\rightarrow V_8'^*\otimes W_{10})$, since $H^0(E_8)\rightarrow V_8'^*\otimes W_{10}$ is injective by \cref{E8 ses}. We would like to show that for any $x'=(s',f'^*)\in X'$ with associated $\bc.v\subset V_8$ and $f'^*\in F'^*|_{x'}\subset W_{10}'^*$,  the image of $v\otimes f'^*\in V_8\otimes W_{10}'^*$ in $V_8'^*\otimes W_{10}$ is zero when viewed as a map $[v\otimes f'^*]:V_8'\rightarrow W_{10}$. We unwind the definition and see that given any $v'\in V_8'$, we first take the pairing of it with $f'^*\in W_{10}'^*$ to get an element $v_{10}'\in V_{10}'$, and then take the pairing of $v_{10}'$ with $v_8$ to get an element $[v\otimes f'^*](v')\in W_{10}$ (using the map $V_8\otimes V_{10}'\cong V_8\otimes V_{10}^*\rightarrow W_{10}$). However we know that the image of the restriction of $V_8'\otimes \sho_{\mathds{P}(W_{10}'^*)}(-1)\rightarrow V_{10}'\otimes \sho_{\mathds{P}(W_{10}'^*)}$ to $X'\subset \bp(W_{10}'^*)$ is equal to $G_4'(-H')$ by \cite[Proposition 3.4.1]{Frederic}. Thus $v_{10}'\in G_4'(-H')|_{x'}\subset V_{10}'=V_{10}^*$, and it has zero pairing with $v\in V_8$ via the map $V_8\otimes V_{10}^*\rightarrow W_{10}$ by \cref{zero pairing}.  
 \end{proof}
 Notice that the pullback of the tautological line bundle over $\mathds{P}(V_8\otimes W_{10}'^*)$ to $X'$ is $\sho_{X'}(4H'-h')$ and $\chi(\sho_{X'}(4H'-h'))=55$, $\chi(E_8)=25$. Also note that $\sho_{X'}(4H'-h')$ is the pullback of the Pl\"ucker polarization to $X'$ via the map $X'\rightarrow DV(t_2')\subset G(6,V_{10}'^*)$ constructed in \cite[Proposition 3.5.6]{Frederic}.  
    % One would expect to verify on some examples that the rank of the map $V_8\otimes (W_{10}')^*\rightarrow (V_8')^*\otimes W_{10}$ is 25, so that $ V_8\otimes W_{10}'^*\rightarrow H^0(E_8)$ is surjective and the kernel is equal to $H^0(\sho_{X'}(4H'-h'))^*$. 

Tensoring the last surjective map in (\ref{h-3H to V8' four terms}) with $\sho(h-H)$, we get a surjective map $E^*_8(h)\rightarrow \sho(4H-h)$.
\begin{Corollary}
    The surjective map $E^*_8(h)\rightarrow \sho(4H-h)$ induces an isomorphism on spaces of global sections.
\end{Corollary}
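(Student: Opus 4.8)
The plan is to pull everything back to the $\bp^1$-bundle $pr\colon X=\bp_S(F^*)\to S$ and then push down again. On $X$ the sheaf ``$E_8^*(h)$'' appearing in (\ref{h-3H to V8' four terms}) is $pr^*(E_8^*(h))$, so $H^0(X,E_8^*(h))=H^0(S,E_8^*(h))$, while $H^0(X,\sho_X(4H-h))=H^0\big(S,pr_*\sho_X(4H-h)\big)=H^0\big(S,(\mathrm{Sym}^4F)(-h)\big)$ because $R^{>0}pr_*\sho_X(4H)=0$. Thus the assertion is that the induced map $H^0(S,E_8^*(h))\to H^0(S,(\mathrm{Sym}^4F)(-h))$ is an isomorphism, and I would prove this by identifying the kernel sheaf of $pr^*(E_8^*(h))\twoheadrightarrow\sho_X(4H-h)$ and showing it has vanishing $H^0$ and $H^1$.

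Since $pr_*\sho_X(-H)=R^1pr_*\sho_X(-H)=0$ (fibrewise these are the cohomology groups of $\sho_{\bp^1}(-1)$), one has $Rpr_*\sho_X(h-H)=0$ and hence $H^\bullet\big(X,V_8'\otimes\sho_X(h-H)\big)=0$. Tensor (\ref{h-3H to V8' four terms}) with $\sho_X(h-H)$ and let $\mathcal{G}$ be the image of the middle map, which is the kernel of $pr^*(E_8^*(h))\to\sho_X(4H-h)$; from the exact sequence $0\to\sho_X(2h-4H)\to V_8'\otimes\sho_X(h-H)\to\mathcal{G}\to0$ and the vanishing just noted, $H^i(X,\mathcal{G})\cong H^{i+1}(X,\sho_X(2h-4H))$ for every $i$.

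It remains to compute $H^\bullet(X,\sho_X(2h-4H))$ by Leray. Here $pr_*\sho_X(-4H)=0$, and relative Serre duality along $pr$ (together with $\omega_{X/S}=pr^*(\det F)\otimes\sho_X(-2H)=\sho_S(h)\otimes\sho_X(-2H)$) gives $R^1pr_*\sho_X(-4H)\cong\big(pr_*(\sho_X(4H)\otimes\omega_{X/S})\big)^\vee\cong\big(\sho_S(h)\otimes\mathrm{Sym}^2F\big)^\vee\cong(\mathrm{Sym}^2F)(-3h)$, where the last step uses $F^*\cong F(-h)$ for the rank $2$ bundle $F$. Hence $H^i(X,\sho_X(2h-4H))\cong H^{i-1}\big(S,(\mathrm{Sym}^2F)(-h)\big)$. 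The decomposition $F\otimes F^*=(\mathrm{Sym}^2F\oplus\wedge^2F)(-h)=(\mathrm{Sym}^2F)(-h)\oplus\sho_S$ identifies $(\mathrm{Sym}^2F)(-h)$ with $\sheafend_0(F)$, the traceless endomorphisms of $F$. Since $F$ is stable it is simple, so $H^0(S,\sheafend_0(F))=\Hom(F,F)/\bc=0$; since $F$ is spherical ($v(F)^2=-2$) we have $\Ext^1(F,F)=0$, so $H^1(S,\sheafend_0(F))=0$. Therefore $H^0(X,\mathcal{G})=H^1(X,\sho_X(2h-4H))=H^0(S,\sheafend_0(F))=0$ and $H^1(X,\mathcal{G})=H^2(X,\sho_X(2h-4H))=H^1(S,\sheafend_0(F))=0$, and the long exact sequence attached to $0\to\mathcal{G}\to pr^*(E_8^*(h))\to\sho_X(4H-h)\to0$ then gives the desired isomorphism on global sections. (As a numerical check, $\chi(S,E_8^*(h))=55=\chi\big(S,(\mathrm{Sym}^4F)(-h)\big)$ and $\chi(S,\sheafend_0(F))=0$, consistent with the short exact sequence $0\to\sheafend_0(F)\to E_8^*(h)\to(\mathrm{Sym}^4F)(-h)\to0$ produced above.)

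The only subtle point is the relative-duality bookkeeping — getting $\omega_{X/S}$, equivalently $R^1pr_*\sho_X(-4H)$, right and tracking the line-bundle twists. The conceptual content is simply that the kernel sheaf is $\sheafend_0(F)$ up to a twist, so the statement reduces entirely to the simplicity and rigidity of $F$; I do not anticipate any genuine obstacle beyond this.
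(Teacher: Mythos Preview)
Your argument is correct and follows essentially the same route as the paper. Both proofs twist (\ref{h-3H to V8' four terms}) by $\sho_X(h-H)$, use that $V_8'\otimes\sho_X(h-H)$ has no cohomology, and reduce to the vanishing of $H^0$ and $H^1$ of $(\mathrm{Sym}^2F)(-h)=\sheafend_0(F)$, which is exactly the simplicity and rigidity of $F$. The only cosmetic difference is in the duality step: the paper applies Serre duality on the threefold $X$ (using $\omega_X=\sho_X(h-2H)$) to pass from $\sho_X(2h-4H)$ to $\sho_X(2H-h)$ and then pushes down, whereas you use Leray together with relative Serre duality along $pr$ to compute $R^1pr_*\sho_X(-4H)$ directly; these are equivalent manoeuvres. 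Your bonus observation that pushing the kernel sequence to $S$ yields $0\to\sheafend_0(F)\to E_8^*(h)\to(\mathrm{Sym}^4F)(-h)\to0$ is a nice way to package the statement, and the numerical check $55=55$ is consistent. No gaps.
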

\begin{proof}
    All higher cohomology groups of $E_8^*(h)$ vanish by (\ref{E8 ses}). We only need to prove $H^1(X,\sho(2h-4H))=H^2(X,\sho(2h-4H))=0$. Using Serre duality, this reduces to showing $H^0(X,\sho(2H-h))=H^1(X,\sho(2H-h))=0$, which is equivalent to $H^0(S,(\mathrm{Sym}^2F)(-h))=H^1(S,(\mathrm{Sym}^2F)(-h))=0$. Since $F$ is a rigid simple bundle and $F\otimes F^*=F\otimes F(-h)=(\mathrm{Sym}^2F)(-h)\oplus \sho_S$, the claim follows.
\end{proof}
Using the canonical isomorphism $V_{10}^*\cong V_{10}'$, we obtain a trivector $t_1\in \wedge^3V_{10}^*$ corresponding to $t_2'\in \wedge^3 V_{10}'$ (for the latter, see Section 2 or \cite{Frederic}). Given such a trivector, one can construct the corresponding Peskine variety: $$Y_{t_1}^2=\{[V_2]\in G(2,V_{10})| \ t_1(a,b,c)=0,  \ \mathrm{for} \  \forall a,b\in V_2, \forall c\in V_{10}\}\subset G(2,V_{10}).$$
Denote $\mathds{P}_S(G_4)$ by $Y$ and denote the tautological hyperplane bundle by $\sho(\tau)$. Recall that we have an inclusion of vector bundles $G_4\otimes F^*\hookrightarrow V_{10}\otimes \sho_S$ over $S$ by (\ref{fundamentalSES}).
\begin{proposition}
    Via the inclusion $F^*(-\tau)\rightarrow G_4\otimes F^*\hookrightarrow V_{10}\otimes \sho_Y$, we get a map $Y\rightarrow G(2,V_{10})$, whose image lies in the Peskine Variety $Y_{t_1}^2$. \label{t1peskine}
\end{proposition}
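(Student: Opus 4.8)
Here is the route I would take, in three stages: make the morphism $\iota\colon Y\to G(2,V_{10})$ explicit, recast membership in $Y^2_{t_1}$ as the vanishing of a single sheaf map on $S$, and finally identify that map with the one ruled out by \cref{omega 2 form}.

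\textbf{The morphism.} The tautological sub-line-bundle $\sho_Y(-\tau)\hookrightarrow G_4$ on $Y=\mathds{P}_S(G_4)$, tensored by $F^*$ and composed with the subbundle inclusion $G_4\otimes F^*\hookrightarrow V_{10}\otimes\sho_S$ of (\ref{fundamentalSES}), is an injective morphism of vector bundles $F^*(-\tau)\hookrightarrow V_{10}\otimes\sho_Y$ of constant rank $2$ with locally free cokernel, because (\ref{fundamentalSES}) is an exact sequence of vector bundles ($\Omega_S(h)$ being locally free). Such a rank-$2$ subbundle of the trivial bundle $V_{10}\otimes\sho_Y$ is the same datum as a morphism $\iota\colon Y\to G(2,V_{10})$, and over $y=(s,[\lambda])$ with $0\neq\lambda\in G_{4,s}$ it sends $y$ to $V_2(y):=\mathrm{image}\bigl(\bc.\lambda\otimes F_s^*\hookrightarrow V_{10}\bigr)$.

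\textbf{Reduction.} Since $\dim V_2(y)=2$, the line $\wedge^2V_2(y)\subset\wedge^2V_{10}$ is spanned by the image of $\lambda^2$ under
\[
\mu\colon\ S^2G_4(-h)=S^2G_4\otimes\wedge^2F^*\ \hookrightarrow\ \wedge^2(G_4\otimes F^*)\ \longrightarrow\ \wedge^2V_{10}\otimes\sho_S,
\]
i.e. the second exterior power of the inclusion in (\ref{fundamentalSES}) restricted to the direct summand $S^2G_4\otimes\wedge^2F^*$ of $\wedge^2(G_4\otimes F^*)$ (indeed, for a basis $e_1,e_2$ of $F_s^*$ the class $(\lambda\otimes e_1)\wedge(\lambda\otimes e_2)$ lies in this summand and equals $\lambda^2\otimes(e_1\wedge e_2)$). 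Writing $T_{t_1}\colon\wedge^2V_{10}\to V_{10}^*$ for contraction with $t_1$, the condition ``$t_1(a,b,c)=0$ for all $a,b\in V_2(y)$ and $c\in V_{10}$'' is exactly ``$T_{t_1}\bigl(\mu_s(\lambda^2)\bigr)=0$''. As the squares $\lambda^2$ span $S^2G_{4,s}$ and $s\in S$ is arbitrary, $\iota(Y)\subset Y^2_{t_1}$ is equivalent to the vanishing of the $\sho_S$-linear map $T_{t_1}\circ\mu\colon S^2G_4(-h)\to V_{10}^*\otimes\sho_S$; using $V_{10}^*=\Hom(G_4,F)$ (\cref{V_10^*}), it suffices to show that the induced morphism $S^2G_4\otimes G_4\to F(h)$ vanishes.

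\textbf{The crux.} Here one must feed in how $t_1$ is built: it is the image of $t_2'\in\wedge^3V_{10}'$ under the Fourier--Mukai isomorphism $V_{10}'\cong V_{10}^*$ of \cref{V_10 are dual to each other}, and $t_2'$ is the image of the distinguished $2$-form $\omega'\in\wedge^2V_8'$ under $\wedge^2V_8'\to\wedge^2V_{10}'\otimes S^2W_{10}'$ (the second exterior power of the syzygy inclusion $V_8'\hookrightarrow V_{10}'\otimes W_{10}'$ attached to $S'$), which lands in $\wedge^2V_{10}'\otimes V_{10}'$. By \cref{interpretation}, under the identifications $V_8'=\Hom(E_8,F)$, $W_{10}'^*=\Hom(G_4,E_8)$, $V_{10}^*=\Hom(G_4,F)$, the syzygy inclusion is adjoint to the composition law $\Hom(G_4,E_8)\otimes\Hom(E_8,F)\to\Hom(G_4,F)$, the inclusion in (\ref{fundamentalSES}) is the co-evaluation $G_4\to V_{10}\otimes F$ of $\Hom(G_4,F)$, and the map $E_8\to V_8'^*\otimes F$ of (\ref{E8 ses}) is the co-evaluation of $\Hom(E_8,F)$, with $\omega'$ entering through the self-duality $V_8'\cong V_8'^*$ used to form (\ref{E8 ses}) (cf. \cref{2form'}). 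Carrying these identifications through $S^2$ and $\wedge^2$, $T_{t_1}\circ\mu$ should be identified with the morphism on $S^2G_4\otimes G_4$ induced by the composite $E_8\to V_8'^*\otimes F\to E_8^*(h)$ of (\ref{E8 ses}); since that composite is zero by \cref{omega 2 form}, we get $T_{t_1}\circ\mu=0$, which proves the proposition.

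\textbf{Expected difficulty.} The first two stages are formal. The real work is the last identification: one must verify that the multilinear map $T_{t_1}\circ\mu$, assembled from (\ref{fundamentalSES}) and the syzygy description of $t_2'$, agrees \emph{on the nose} --- not merely up to a scalar on each Schur summand --- with the map induced by the zero morphism $E_8\to E_8^*(h)$, which forces one to track the Fourier--Mukai isomorphisms of \cref{interpretation,V_10 are dual to each other} through the symmetric and exterior powers. (A naive shortcut via the embeddings $X'\hookrightarrow S^{[2]}$ and $X'\hookrightarrow DV(t_1)\subset G(6,V_{10})$ plus the incidence $V_2\subset V_6$ does not suffice, since the Peskine condition requires $t_1(a,b,c)=0$ for \emph{all} $c\in V_{10}$, not just for $c$ in a fixed $6$-plane.) As with \cref{omega 2 form}, the safest way to pin down this compatibility is probably to check the identity on one explicit genus $16$ example using Frédéric Han's Macaulay2 code and then to propagate the (open) conclusion to the general polarized K3 surface of genus $16$.
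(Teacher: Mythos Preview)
Your first two stages match the paper's reduction (in dual form): the paper also reduces to showing that the composite $V_{10}\cong V_{10}'^*\xrightarrow{t_2'}\wedge^2V_{10}'\cong\wedge^2V_{10}^*\to\wedge^2(F(\tau))$ vanishes over $Y$, which is the transpose of your $T_{t_1}\circ\mu=0$. (Your further passage to ``$S^2G_4\otimes G_4\to F(h)$ vanishes'' needs justification, since the evaluation $V_{10}^*\otimes\sho_S\to G_4^*\otimes F$ dual to (\ref{fundamentalSES}) is surjective rather than injective; one can salvage it via $\Hom(S^2G_4(-h),T_S(-h))=0$, but you do not say this.)

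The gap is in your third stage. You name the right ingredients---\cref{interpretation}, \cref{omega 2 form}, the sequence (\ref{E8 ses})---but you do not carry out the identification; you only assert that $T_{t_1}\circ\mu$ ``should be identified with the morphism on $S^2G_4\otimes G_4$ induced by the composite $E_8\to V_8'^*\otimes F\to E_8^*(h)$'' (without saying how that composite of rank-$8$ bundles produces a map with source $S^2G_4\otimes G_4$) and then concede this is the hard part, falling back on a computer check. The paper avoids this entirely. Instead of identifying $T_{t_1}\circ\mu$ directly, it precomposes with the surjection $S^2W_{10}'^*\twoheadrightarrow V_{10}'^*\cong V_{10}$ coming from the factorization of $\omega'$, and then observes that on $Y$ the map $V_8'\to V_{10}'\otimes W_{10}'\to F(\tau)\otimes W_{10}'$ factors as
\[
V_8'\ \longrightarrow\ E_8^*(-\tau)\otimes F(\tau)\ \longrightarrow\ W_{10}'\otimes F(\tau),
\]
using the natural map $E_8^*(-\tau)\hookrightarrow E_8^*\otimes G_4\to W_{10}'$ together with \cref{interpretation}. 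Taking the $S^2(\cdot)\otimes\wedge^2(\cdot)$ piece of $\wedge^2$, the image of $\omega'$ passes through $S^2(E_8^*(-\tau))\otimes\wedge^2(F(\tau))$ before reaching $S^2W_{10}'\otimes\wedge^2(F(\tau))$, and \cref{omega 2 form} kills it already at that earlier stage. No ``on the nose'' tracking through Schur functors and no Macaulay2 are needed; the maneuver you are missing is to work on $Y$ and use the tautological line $\sho_Y(-\tau)\subset G_4$ to turn $E_8^*\otimes G_4\to W_{10}'$ into a bundle map $E_8^*(-\tau)\to W_{10}'$ through which the whole diagram routes.
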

\begin{proof}
    Each element $\omega_1\in\wedge^2 V_8'$ induces a map $\omega_1: S^2W_{10}'^*\rightarrow\wedge^2 V_{10}'$ by the map $\wedge^2 V_8'\rightarrow \wedge^2 V_{10}'\otimes S^2W_{10}'$. By the construction of $\omega'\in\wedge^2 V_8'$ in \cite{Frederic}, the map $\omega': S^2W_{10}'^*\rightarrow\wedge^2 V_{10}'$ factors as $\omega': S^2W_{10}'^*\twoheadrightarrow V_{10}'^*\rightarrow\wedge^2 V_{10}'$ where the latter map in the factorization is given by $t_2'\in \wedge^3V_{10}'$. Hence we only need to prove that the composition of the map $\omega': S^2W_{10}'^*\rightarrow\wedge^2 V_{10}'\cong \wedge^2V_{10}^*$ with $\wedge^2 V_{10}^*\rightarrow \wedge^2(F(\tau))$ is zero.
    
    By \cref{omega 2 form}, the 2-form $\omega'$ lies in the kernel of $\wedge^2V_8'\rightarrow \wedge^2(E_8^*\otimes F)\rightarrow S^2(E_8^*)(h)$, and equivalently in the kernel of $\wedge^2V_8'\rightarrow \wedge^2[E_8^*((-\tau))\otimes F(\tau)]\rightarrow S^2(E_8^*(-\tau))\otimes \wedge^2(F(\tau))$. On the other hand, we have the natural map $E_8^*(-\tau)\hookrightarrow E_8^*\otimes G_4\rightarrow W_{10}'$, and then trivially $\omega'$ lies in the kernel of the composition of $\wedge^2V_8'\rightarrow S^2(E_8^*(-\tau))\otimes \wedge^2(F(\tau))\rightarrow S^2W_{10}'\otimes \wedge^2(F(\tau))$, thus giving rise to the zero map $S^2W_{10}'^*\rightarrow \wedge^2(F(\tau))$. (Essentially we are using \cref{interpretation} here).
\end{proof}

\bibliographystyle{alpha}      % 可选: plain, abbrv, unsrt, plainnat 等
\bibliography{bibliography} 
\texttt{Junyu.Meng@math.univ-toulouse.fr} \\
Institut de Math\'ematiques de Toulouse; UMR 5219, Universit\'e de Toulouse; CNRS, UPS, F-31062 Toulouse Cedex 9, France
\end{document}